\definecolor{ANDREW}{RGB}{255,127,0}
\theoremstyle{plain}
\newtheorem{proposition}{Proposition}[section]
\newtheorem{theorem}[proposition]{Theorem}
\newtheorem{lemma}[proposition]{Lemma}
\newtheorem{corollary}[proposition]{Corollary}
\theoremstyle{definition}
\newtheorem{example}[proposition]{Example}
\newtheorem{definition}[proposition]{Definition}
\newtheorem{observation}[proposition]{Observation}
\theoremstyle{remark}
\newtheorem{remark}[proposition]{Remark}
\DeclareMathOperator{\Aut}{Aut}
\DeclareMathOperator{\dimension}{dim}
\DeclareMathOperator{\Real}{Re}
\DeclareMathOperator{\Imaginary}{Im}
\DeclareMathOperator{\SL}{SL}
\DeclareMathOperator{\GL}{GL}
\DeclareMathOperator{\SU}{SU}
\DeclareMathOperator{\PGL}{PGL}
\DeclareMathOperator{\End}{End}
\DeclareMathOperator{\Imag}{Im} 
\DeclareMathOperator{\Spanset}{Span} 
\DeclareMathOperator{\Isom}{Isom} 
\DeclareMathOperator{\Id}{Id} 
\DeclareMathOperator{\U}{U} 
\DeclareMathOperator{\PU}{PU} 
\DeclareMathOperator{\hol}{hol}
\DeclareMathOperator{\Bc}{\mathcal{B}}
\DeclareMathOperator{\Hc}{\mathcal{H}}
\DeclareMathOperator{\Lc}{\mathcal{L}}
\DeclareMathOperator{\Oc}{\mathcal{O}}
\DeclareMathOperator{\Cb}{\mathbb{C}}
\DeclareMathOperator{\Hb}{\mathbb{H}}
\DeclareMathOperator{\Kb}{\mathbb{K}}
\DeclareMathOperator{\Nb}{\mathbb{N}}
\DeclareMathOperator{\Pb}{\mathbb{P}}
\DeclareMathOperator{\Rb}{\mathbb{R}}
\DeclareMathOperator{\Zb}{\mathbb{Z}}
\newcommand{\abs}[1]{\left|#1\right|}
\newcommand{\norm}[1]{\left\|#1\right\|}
\newcommand{\wh}[1]{\widehat{#1}}
\newcommand{\ip}[1]{\left\langle #1\right\rangle}
\begin{document}

\title[Characterizing the unit ball by its automorphism group]{Characterizing the unit ball by its projective automorphism group}
\author{Andrew M. Zimmer}\address{Department of Mathematics, University of Chicago, Chicago, IL 60637.}
\email{aazimmer@uchicago.edu}
\date{\today}
\keywords{}
\subjclass[2010]{}

\begin{abstract} 
In this paper we study the projective automorphism group of domains in real, complex, and quaternionic projective space and present two new characterizations of the unit ball in terms of the size of the automorphism group and the regularity of the boundary. 
\end{abstract}

\maketitle

\section{Introduction}

Suppose $\Kb$ is either the real numbers $\Rb$, the complex numbers $\Cb$, or the quaternions $\Hb$. View $\Kb^{d+1}$ as a right $\Kb$-module and consider the action of $\GL_{d+1}(\Kb)$ on the left. Let $\Pb(\Kb^{d+1})$ be the space of $\Kb$-lines in $\Kb^{d+1}$ (parametrized on the right). Then $\PGL_{d+1}(\Kb)$ acts on $\Pb(\Kb^{d+1})$ by diffeomorphisms. 

Given an open set $\Omega \subset \Pb(\Kb^{d+1})$ the \emph{projective automorphism group} is defined to be
\begin{align*}
\Aut(\Omega) = \left\{ \varphi \in \PGL_{d+1}(\Kb) : \varphi \Omega = \Omega \right\}.
\end{align*}
For instance, consider the set 
\begin{align*}
\Bc = \left\{ [1: z_1 : \dots : z_d] \in \Pb(\Kb^{d+1}) : \sum_{i=1}^d \abs{z_i}^2 < 1\right\}  \subset \Pb(\Kb^{d+1}).
\end{align*}
Then $\Aut(\Bc)$ coincides with the image of $\U_{\Kb}(1,d)$ in $\PGL_{d+1}(\Kb)$ and $\Bc$ is a bounded symmetric domain in the following sense: $\Bc$ is bounded in an affine chart of $\Pb(\Kb^{d+1})$ and $\Aut(\Bc)$ is a simple Lie group which acts transitively on $\Bc$. Moreover, there is a natural $\Aut(\Bc)$-invariant Riemannian metric $g$ which makes $(\Bc, g)$ isometric to $\Kb$-hyperbolic $d$-space (see for instance~\cite[Chapter 19]{M1973}).

The main goal of this paper is to provide new characterizations of this symmetric domain. These characterizations will be in terms of the regularity of the boundary ($\partial \Bc$ is real analytic) and the size of the automorphism group ($\Aut(\Bc)$ acts transitively on $\Bc$). 

We will measure the size of $\Aut(\Omega)$ using the \emph{limit set} $\Lc(\Omega) \subset \partial \Omega$ which is the set of points $x \in \partial \Omega$ so that there exists some $p \in \Omega$ and a sequence $\varphi_n \in \Aut(\Omega)$ with $\varphi_n p \rightarrow x$. Since $\Aut(\Bc)$ acts transitively on $\Bc$ clearly $\Lc(\Bc) = \partial \Bc$.

We will also restrict our attention to a particular class of domains:

\begin{definition}
We call an open set $\Omega \subset  \Pb(\Kb^{d+1})$ a \emph{proper domain} if $\Omega$ is connected and bounded in some affine chart. 
\end{definition}

We first show that $\Bc$ is the only proper domain in complex or quarternionic projective space whose boundary is $C^1$ and whose limit set contains a spanning set.

\begin{theorem}\label{thm:C1}
Suppose $\Kb$ is either $\Cb$ or $\Hb$ and $\Omega \subset \Pb(\Kb^{d+1})$ is a proper domain with $C^1$ boundary. If there exists $x_1, \dots, x_{d+1} \in \Lc(\Omega)$ so that 
\begin{align*}
x_1 + \dots + x_{d+1} = \Kb^{d+1}
\end{align*}
(as $\Kb$-lines) then $\Omega$ is projectively isomorphic to $\Bc$. 
\end{theorem}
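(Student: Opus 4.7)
My plan is to run a projective rescaling argument at each of the $d+1$ limit points $x_i$, use the $C^1$ regularity of $\partial\Omega$ to constrain the rescaling limits, and combine the resulting data via the spanning hypothesis to identify $\Omega$ projectively with $\Bc$. By the spanning hypothesis, applying a projective transformation reduces to the case $x_i = [e_i]$ where $e_1, \ldots, e_{d+1}$ is the standard basis; the goal is then to exhibit some $g \in \PGL_{d+1}(\Kb)$ with $g(\Omega) = \Bc$.

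I would first fix a basepoint $p_0 \in \Omega$ and, for each $i$, choose $\varphi_n^{(i)} \in \Aut(\Omega)$ with $\varphi_n^{(i)}(p_0) \to x_i$. Lifting to $\wt\varphi_n^{(i)} \in \GL_{d+1}(\Kb)$, dividing by the operator norm, and passing to a subsequence, I obtain a nonzero endomorphism $\psi_i \in \End(\Kb^{d+1})$ with $\norm{\psi_i} = 1$. Standard arguments (using that $\Omega$ is bounded in an affine chart and that $\varphi_n^{(i)}(p_0) \to x_i \in \partial\Omega$) show $\psi_i$ is singular and that its projective image $L_i := [\mathrm{image}(\psi_i)]$ is a $\Kb$-projective subspace lying in $\partial\Omega$ and containing $x_i$.

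Next, I would use the $C^1$ regularity to force $L_i = \{x_i\}$, so that $\psi_i$ has rank one with kernel a $\Kb$-hyperplane $H_i$ supporting $\overline\Omega$ at $x_i$ and avoiding $x_i$. The key input is that at any point of $\partial\Omega$, a $\Kb$-projective subspace contained in $\partial\Omega$ and passing through that point must lie in the real tangent hyperplane; since such a subspace is $\Kb$-invariant, it lies in the maximal $\Kb$-projective hyperplane contained in this real tangent. Applying this principle along every point of $L_i$, and combining with $\Kb$-equivariance, I expect the $L_i$ to collapse to $\{x_i\}$, providing a canonical supporting $\Kb$-hyperplane $H_i$ at $x_i$ with $x_i \notin H_i$.

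Given the rank-one data $\{(x_i, H_i)\}_{i=1}^{d+1}$, the spanning hypothesis means this incidence data is projectively equivalent to that of $\Bc$ at its analogous standard points, and it determines (up to scale) a Hermitian form $Q$ on $\Kb^{d+1}$ of signature $(1,d)$ with $H_i = \{v : Q(e_i, v) = 0\}$. After a further projective change aligning $Q$ with the standard form, $\Omega$ and $\Bc$ share supporting hyperplanes at the $d+1$ spanning limit points; using that orbits of $\Aut(\Omega)$ through the $x_i$ are dense in $\partial\Omega$ (a consequence of the rescaling picture) and matching them with the $\Aut(\Bc)$-orbits, I conclude $\Omega = \Bc$. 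The main obstacle I anticipate is the rank-one collapse in the previous paragraph: real $C^1$ regularity alone only confines $L_i$ to a real hyperplane at each of its points, while what is needed is confinement into a $\Kb$-projective hyperplane and then collapse to a single point. Bridging this gap will require carefully exploiting the interplay between $\Kb$-equivariance of the rescaling, the $\Aut(\Omega)$-invariance of $\partial\Omega$, and the boundary regularity at every point of $L_i$.
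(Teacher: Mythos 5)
There are two genuine gaps. First, the rank-one collapse that you flag as ``the main obstacle'' is indeed the crux, and even granting it your description of the limiting data is off: after passing to a subsequence so that $\varphi_n^{(i)}(p_0)\to x_i$ \emph{and} $(\varphi_n^{(i)})^{-1}(p_0)\to x_i^-$, the normalized lifts converge to a rank-one $\Phi$ whose image is $x_i$ and whose kernel is $T_{x_i^-}^{\Kb}\partial\Omega$ --- the tangent $\Kb$-hyperplane at the \emph{repelling} limit point $x_i^-$, not a supporting hyperplane at $x_i$, and there is no a priori reason it avoids $x_i$ (that is exactly bi-proximality, which has to be established separately). The paper's proof of the collapse (Proposition~\ref{prop:limits}) is a duality argument: one simultaneously rescales $({}^t\overline{\varphi}_n)^{-1}$ acting on the dual set $\Omega^*$, observes that otherwise $\partial\Omega$ would contain an open piece of a projective subspace while $\Omega^*$ contains an open piece of the orthogonal subspace, and uses uniqueness of the $C^1$ tangent hyperplane to force both to be single points. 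Your proposed route --- a $\Kb$-projective subspace in $\partial\Omega$ must lie in the real tangent hyperplane at each of its points --- does not by itself yield the collapse, so this step genuinely needs the dual-domain input.

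Second, and more seriously, the endgame does not work. The incidence data $\{(x_i,H_i)\}_{i=1}^{d+1}$ does not determine a Hermitian form of signature $(1,d)$: a Hermitian form on $\Kb^{d+1}$ has far more parameters than these $d+1$ point/hyperplane incidences constrain, and nothing in your outline rules out a non-quadratic boundary with the same incidence data; ``matching orbits'' with those of $\Aut(\Bc)$ presupposes what is to be proved. The paper's route is quite different: (i) produce a bi-proximal element (Theorem~\ref{thm:bi_prox_exist}) and use the blow-up Theorem~\ref{thm:blow_up} to exhibit $\Omega$ as a graph domain $\Real(z_1)>F(z_2,\dots,z_d)$ invariant under an embedded copy of $\Aut_0(\{\Real z>0\})$; (ii) use the $d+1$ spanning limit points to show $T_0\Lc(\Omega)=T_0\partial\Omega$, so that the limit set --- a closed homogeneous $C^\infty$ submanifold by Proposition~\ref{prop:limit_set} --- is open and closed in $\partial\Omega$, whence $\Lc(\Omega)=\partial\Omega$ and $\partial\Omega$ is $C^\infty$; and (iii) only then extract the Hermitian form, from the $C^2$ Taylor expansion of $F$ at $0$ combined with the homogeneity $F(wz)=\abs{w}^2F(z)$ supplied by the $\SL_2(\Kb)$ action (this is the proof of Theorem~\ref{thm:C2}). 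Your proposal has no substitute for steps (ii) and (iii), which is where the $C^1$ hypothesis is upgraded and the ball is actually identified.
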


\begin{remark}\label{rmk:convex_divisible} Theorem~\ref{thm:C1} fails completely in real projective space. In particular, there are many examples of proper domains $\Omega \subset \Pb(\Rb^{d+1})$ which have $C^1$ boundary, $\Lc(\Omega) = \partial \Omega$, and $\Aut(\Omega)$ is a discrete group which acts properly and co-compactly on $\Omega$.  In some of these examples $\Aut(\Omega)$ is isomorphic to a lattice in $\Isom(\Hb_{\Rb}^d)$ (see~\cite[Section 1.3]{B2000} for $d > 2$ and~\cite{G1990} for $d=2$) while in other examples $\Aut(\Omega)$ is not quasi-isometric to any symmetric space (see~\cite{K2007}). More background on these examples of \emph{divisible sets} in real projective space can be found in the survey papers by Benoist~\cite{B2008}, Goldman~\cite{G2009}, Marquis~\cite{M2013}, and Quint~\cite{Q2010}.
\end{remark}

If $\Aut(\Omega)$ acts co-compactly on $\Omega$ is it straightforward to show that $\Lc(\Omega) = \partial \Omega$ (see Corollary~\ref{cor:cocpct} below). So Theorem~\ref{thm:C1} implies the following:

\begin{corollary}\label{cor:co_cpct}
Suppose $\Kb$ is either $\Cb$ or $\Hb$ and $\Omega \subset \Pb(\Kb^{d+1})$ is a proper domain with $C^1$ boundary. If $\Aut(\Omega)$ acts co-compactly on $\Omega$ then 
$\Omega$ is projectively isomorphic to $\Bc$. 
\end{corollary}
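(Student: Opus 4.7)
The strategy is to reduce to Theorem~\ref{thm:C1}. Since the $C^1$ boundary regularity is given as a hypothesis, what remains is to produce $d+1$ points in $\Lc(\Omega)$ whose lifts are $\Kb$-linearly independent in $\Kb^{d+1}$. Two ingredients are needed: (a) $\Lc(\Omega) = \partial \Omega$, and (b) $\partial \Omega$ is not contained in any projective hyperplane of $\Pb(\Kb^{d+1})$.

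For (a), which is the content of Corollary~\ref{cor:cocpct} promised in the excerpt, I would fix a compact fundamental domain $K \subset \Omega$ and, given $x \in \partial \Omega$, choose $p_n \in \Omega$ with $p_n \to x$ and write $p_n = \varphi_n(k_n)$ with $\varphi_n \in \Aut(\Omega)$ and $k_n \in K$. The fact that $p_n$ exits every compact subset of $\Omega$ forces $\{\varphi_n\}$ to exit every compact subset of $\PGL_{d+1}(\Kb)$, since otherwise $\{\varphi_n(K)\}$ would remain in a compact subset of $\Omega$. Choosing lifts $A_n \in \GL_{d+1}(\Kb)$ normalized so that $\|A_n\| = 1$ and passing to subsequences, $A_n \to A \in \End_{\Kb}(\Kb^{d+1})$ with $A$ singular, while $k_n \to k_\infty \in K$. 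A brief examination of scalar lifts of $p_n$ and $k_n$ shows $k_\infty \notin \Pb(\ker A)$, and then $\varphi_n(k_\infty) \to A(k_\infty) = x$, placing $x$ in $\Lc(\Omega)$. For (b), if $\partial \Omega \subset H$ for some projective hyperplane $H$, then $\Omega$ is a non-empty open subset of the affine chart $\Pb(\Kb^{d+1}) \setminus H$ that is also closed there (since $\overline{\Omega} \subset \Omega \cup H$); by connectedness of the chart, $\Omega$ equals the whole chart, contradicting boundedness.

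With (a) and (b) in hand, I would inductively select $x_1, \ldots, x_{d+1} \in \partial \Omega = \Lc(\Omega)$ with $\Kb$-linearly independent lifts, and Theorem~\ref{thm:C1} concludes that $\Omega$ is projectively isomorphic to $\Bc$. The main obstacle is (a): while the scheme is classical, the degeneration argument in $\PGL_{d+1}(\Kb)$ requires care, particularly to identify the limit endomorphism $A$ and to rule out $k_\infty \in \Pb(\ker A)$; the non-commutativity of $\Hb$ forces one to work with right-module structures throughout, but the key estimates (operator norms, kernels, scalar balancing) transfer unchanged from the complex case. The hyperplane-avoidance step (b) is essentially automatic from the openness and boundedness of $\Omega$.
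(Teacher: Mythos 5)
Your overall reduction is exactly the paper's: establish $\Lc(\Omega)=\partial\Omega$ (this is Corollary~\ref{cor:cocpct}), check that $\partial\Omega$ contains a spanning set, and invoke Theorem~\ref{thm:C1}. Step (b) and the inductive selection of spanning points are fine. The problem is in your proof of (a), at precisely the point you flag as delicate. Write $\hat{p}_n,\hat{k}_n$ for unit lifts; then $A_n\hat{k}_n=\hat{p}_nc_n$ for scalars $c_n\in\Kb^*$ with $\abs{c_n}=\norm{A_n\hat{k}_n}$. If $k_\infty\in\Pb(\ker A)$, this only tells you $c_n\to 0$, i.e.\ the vectors $A_n\hat{k}_n$ shrink --- which is perfectly consistent with $[A_n\hat{k}_n]=p_n\to x$. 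So ``a brief examination of scalar lifts'' does not rule out $k_\infty\in\Pb(\ker A)$; there is no contradiction available at that level of generality, and this is the genuinely hard point of the degeneration method. (Controlling where $\Pb(\ker A)$ sits relative to $\Omega$ is the content of part~(3) of Proposition~\ref{prop:limits}, whose proof is a full KAK-decomposition and duality analysis using the $C^1$ boundary, not a normalization of scalars.)

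The paper sidesteps the issue entirely with the invariant metric $C_\Omega$: after passing to a subsequence with $\varphi_np_n\to k\in K$, one has $C_\Omega(p_n,\varphi_n^{-1}k)=C_\Omega(\varphi_np_n,k)\to 0$ by continuity of $C_\Omega$, and since $C_\Omega$ dominates the complete metric $C_{\Bc}$ of an enclosing ball, $\varphi_n^{-1}k\to x$; hence $x\in\Lc(\Omega)$ with basepoint $k$, with no boundary regularity needed. If you want to keep your linear-algebra route, it can be repaired in the present setting: pass to a further subsequence so that $\varphi_np\to x^+$ and $\varphi_n^{-1}p\to x^-$ for a fixed $p\in\Omega$ (properness of the action, Proposition~\ref{prop:closed}, forces the limits into $\partial\Omega$), then cite Proposition~\ref{prop:limits}(3) to get $\ker\Phi=T^{\Kb}_{x^-}\partial\Omega$, which is disjoint from $\Omega\ni k_\infty$; then $\varphi_nk_n\to\Phi(k_\infty)$, so $\Phi(k_\infty)=x$ and $x\in\Lc(\Omega)$. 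But that repair leans on the $C^1$ hypothesis and the full strength of Proposition~\ref{prop:limits}; as written, your argument for (a) has a real hole.
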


We will show the action of $\Aut(\Omega)$ is proper whenever $\Omega$ is a proper domain (see Proposition~\ref{prop:closed} below). In particular, if $\Aut(\Omega)$ is non-compact then $\Lc(\Omega) \neq \emptyset$. So Theorem~\ref{thm:C1} also implies:

\begin{corollary}
Suppose $\Kb$ is either $\Cb$ or $\Hb$ and $\Omega \subset \Pb(\Kb^{d+1})$ is a proper domain with $C^1$ boundary. If $\Aut(\Omega)$ is non-compact and the group 
\begin{align*}
G=\{ g \in \GL_{d+1}(\Kb) : [g] \in \Aut(\Omega)\}
\end{align*}
acts irreducibly on $\Kb^{d+1}$ then $\Omega$ is projectively isomorphic to $\Bc$. 
\end{corollary}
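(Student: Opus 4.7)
The plan is to reduce this corollary to Theorem~\ref{thm:C1} by producing $d+1$ limit points whose underlying $\Kb$-lines span $\Kb^{d+1}$; the irreducibility hypothesis on $G$ is exactly what is needed to run this reduction.

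First, I would invoke Proposition~\ref{prop:closed} (the properness statement promised in the introduction): since $\Omega$ is a proper domain, $\Aut(\Omega)$ acts properly on $\Omega$. Combined with the hypothesis that $\Aut(\Omega)$ is non-compact, this guarantees $\Lc(\Omega) \neq \emptyset$. Indeed, fixing any $p \in \Omega$ and any unbounded sequence $\varphi_n \in \Aut(\Omega)$, properness prevents $\varphi_n p$ from accumulating inside $\Omega$, so after extracting a subsequence $\varphi_n p$ converges to a point of $\partial \Omega$, which by definition lies in $\Lc(\Omega)$.

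Next, I would upgrade non-emptiness into spanning using irreducibility. Let $V \subset \Kb^{d+1}$ be the $\Kb$-linear span of
\begin{align*}
\left\{ v \in \Kb^{d+1} \setminus \{0\} : [v] \in \Lc(\Omega) \right\}.
\end{align*}
Since $\Lc(\Omega)$ is tautologically $\Aut(\Omega)$-invariant, the subspace $V$ is $G$-invariant. Because $V$ is nonzero (by the previous step) and $G$ acts irreducibly on $\Kb^{d+1}$, we must have $V = \Kb^{d+1}$. Choosing $d+1$ lines $x_1, \dots, x_{d+1} \in \Lc(\Omega)$ whose lifts form a $\Kb$-basis of $\Kb^{d+1}$ then gives $x_1 + \cdots + x_{d+1} = \Kb^{d+1}$, and Theorem~\ref{thm:C1} immediately yields that $\Omega$ is projectively isomorphic to $\Bc$.

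There is no genuine obstacle in this corollary; it is a direct linear-algebraic repackaging of Theorem~\ref{thm:C1} using the irreducibility assumption. The substantive content lies in Theorem~\ref{thm:C1} itself and in establishing Proposition~\ref{prop:closed}, both of which are used here as black boxes.
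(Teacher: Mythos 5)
Your proof is correct and is exactly the argument the paper intends (the paper only sketches it: properness of the action plus non-compactness gives a nonempty limit set, and irreducibility applied to the $G$-invariant span of the limit set gives a spanning set of limit points, so Theorem~\ref{thm:C1} applies). You have simply filled in the details the paper leaves to the reader.
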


If $\Omega \subset \Pb(\Kb^{d+1})$ is a proper domain, $\partial \Omega$ is a $C^1$ hypersurface, and $x \in \partial \Omega$ we define $T_x^{\Kb} \partial \Omega \subset \Pb(\Kb^{d+1})$ to be the $\Kb$-hyperplane tangent to $\partial \Omega$ at $x$. It is reasonable to refer to the set $T_x^{\Kb} \partial \Omega \cap \partial \Omega$ as the \emph{closed $\Kb$-face of $x$} in $\partial \Omega$. Our next result shows that $\Bc$ is the only set in projective space with $C^2$ boundary and whose limit set intersects two different closed two faces. 

\begin{theorem}\label{thm:C2}
Suppose $\Kb$ is either $\Rb$, $\Cb$, or $\Hb$ and $\Omega \subset \Pb(\Kb^{d+1})$ is a proper domain with $C^2$ boundary. If there exists $x, y \in \Lc(\Omega)$ with $T_x^{\Kb} \partial \Omega \neq T_y^{\Kb} \partial \Omega$ then $\Omega$ is projectively isomorphic to $\Bc$. 
\end{theorem}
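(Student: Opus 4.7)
The approach is a projective rescaling argument at $x$, in the spirit of the Wong--Rosay/Pinchuk scaling in complex analysis and of Frankel's rescaling method for convex domains. The $C^2$ regularity produces a rescaling limit bounded by the osculating quadric of $\partial \Omega$ at $x$; the second point $y$ with distinct $\Kb$-tangent plane is then used to force the limiting projective map to be non-singular.

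Fix $p \in \Omega$ and choose $\varphi_n \in \Aut(\Omega)$ with $\varphi_n(p) \to x$. Work in an affine chart in which $\Omega$ is bounded and choose coordinates so that $x=0$ and $T_x^{\Kb}\partial \Omega$ is a standard $\Kb$-hyperplane. Using the $C^2$ regularity, approximate $\partial \Omega$ near $x$ by its osculating quadric. Construct anisotropic projective rescalings $A_n \in \PGL_{d+1}(\Kb)$ (dilating the inward-normal direction more strongly than the $\Kb$-tangential directions) so that the compositions $\Psi_n := A_n \circ \varphi_n$ satisfy $\Psi_n(p) \to p_0$ for a fixed interior point $p_0$, and so that $\Psi_n(\Omega) = A_n(\Omega)$ converges in the local Hausdorff topology to a domain $\Omega_\infty$ bounded by a quadric. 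Since $\Omega$ is a proper domain, the quadric must be definite, so $\Omega_\infty$ is projectively isomorphic to $\Bc$.

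Now pass to a subsequential limit $\Psi$ of $\Psi_n$ in the compactification $\Pb(\End_{\Kb}(\Kb^{d+1}))$ of $\PGL_{d+1}(\Kb)$. If $\Psi$ is invertible then $\Psi(\Omega) = \Omega_\infty \cong \Bc$ and we are done. The main remaining task is to rule out $\Psi$ being a singular projective map. This is where the hypothesis on $y$ enters: the kernel of any singular limit $\Psi$ must lie inside $T_x^{\Kb}\partial \Omega$ (as a consequence of the anisotropic choice of $A_n$ and the properness of $\Omega_\infty$), while the hypothesis $T_y^{\Kb}\partial \Omega \neq T_x^{\Kb}\partial \Omega$ constrains the relative position of $y$ with respect to $T_x^{\Kb}\partial \Omega$. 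Concretely, one chooses an additional sequence $\psi_n \in \Aut(\Omega)$ with $\psi_n(p) \to y$ and tracks $\Psi_n \circ \psi_n^{-1}$; the distinct-tangent-plane hypothesis is precisely what is needed to show that $\Psi$ cannot collapse both $x$ and $y$ into its singular locus, forcing $\Psi$ to have full rank.

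The main obstacle is this final rank-preservation step: the scaling construction and the identification of the limit as a quadric-bounded domain are largely standard given the $C^2$ hypothesis, but using the two-point configuration $(x,y)$ to prevent the composed map $A_n \circ \varphi_n$ from degenerating to a rank-deficient projective transformation requires carefully matching the anisotropic stretching near $x$ to the constraint imposed by the second tangent plane at $y$. Once non-degeneracy of $\Psi$ is secured, the projective isomorphism $\Omega \cong \Omega_\infty \cong \Bc$ follows immediately.
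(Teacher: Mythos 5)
There are two genuine gaps, and both occur at the points your plan identifies as ``largely standard.'' First, the claim that the rescaled domains $A_n(\Omega)$ converge to a domain bounded by a \emph{definite} quadric does not follow from $C^2$ regularity and properness. Properness of $\Omega$ places no constraint on the second fundamental form of $\partial\Omega$ at $x$: the Hessian of a defining function at $x$ can be degenerate or even vanish (e.g.\ a boundary locally of the form $\Real(z_1)=\abs{z_2}^4$, as in Example~\ref{ex:ellipse}), in which case the anisotropic blow-up produces a half-space or a degenerate parabolic region, not $\Bc$. Moreover, even when the osculating quadric is definite, the Hausdorff convergence $A_n(\Omega)\to\Omega_\infty$ is a \emph{global} statement: the maps $A_n$ act on all of $\Omega$, and the Taylor expansion at $x$ only controls the image of a small neighborhood of $x$; nothing in the setup controls where the rest of $\partial\Omega$ goes. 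Second, the non-degeneracy step is not actually an argument: a singular limit $\Psi\in\Pb(\End(\Kb^{d+1}))$ has kernel a proper $\Kb$-subspace, and requiring that neither $x$ nor $y$ lies in that kernel is far from forcing $\Psi$ to have full rank. Indeed, Proposition~\ref{prop:limits} of the paper shows that for \emph{any} escaping sequence $\varphi_n\in\Aut(\Omega)$ the limit in $\Pb(\End(\Kb^{d+1}))$ has rank one, so degeneration is the generic behavior and must be repaired by composing with the rescalings in a way your sketch does not specify.

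For comparison, the paper's route extracts the definiteness from dynamics rather than positing it. The hypothesis $T_x^{\Kb}\partial\Omega\neq T_y^{\Kb}\partial\Omega$ is first used (Theorem~\ref{thm:bi_prox_exist}, via Lemmas~\ref{lem:duality} and~\ref{lem:generic_endpoints}) to produce a \emph{bi-proximal} element $\varphi\in\Aut(\Omega)$. Proposition~\ref{prop:bi_prox_str} then gives $T^{\Kb}_{x^+_\varphi}\partial\Omega\cap\partial\Omega=\{x^+_\varphi\}$, which is where the strict positivity $F>0$ off the origin comes from, and Theorem~\ref{thm:blow_up} puts $\Omega$ in the global graph form $\Real(z_1)>F(z_2,\dots,z_d)$ with the exact self-similarity $F(x)=\lambda^{2n}F(\lambda^{-n}A^nx)$. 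This functional equation first forces $A$ to lie in a bounded group and then, combined with $C^2$ regularity, forces $F$ to equal $\tfrac12\mathrm{Hess}_0(F)$, a positive definite Hermitian form; the conclusion follows. Your rescaling scheme could in principle be made to work, but only after importing exactly these ingredients (a bi-proximal element and the resulting invariance of $F$), at which point it collapses into the paper's argument.
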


\begin{remark}
Theorem~\ref{thm:C2} fails for domains with $C^{1,1}$ boundary (see Section~\ref{sec:example}) and in the holomorphic setting (see Example~\ref{ex:ellipse}). 
\end{remark}

Using Proposition~\ref{prop:bi_prox_str} below, Theorem~\ref{thm:C2} implies:

\begin{corollary}
Suppose $\Kb$ is either $\Rb$, $\Cb$, or $\Hb$ and $\Omega \subset \Pb(\Kb^{d+1})$ is a proper domain with $C^2$ boundary. If there exists an element $\varphi \in \GL_{d+1}(\Kb)$ which has eigenvalues of different absolute value and $[\varphi] \in \Aut(\Omega)$ then $\Omega$ is projectively isomorphic to $\Bc$. 
\end{corollary}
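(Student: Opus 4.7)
The plan is to invoke Theorem~\ref{thm:C2}: it suffices to exhibit $x,y \in \Lc(\Omega)$ with $T_x^{\Kb}\partial\Omega \neq T_y^{\Kb}\partial\Omega$. These two points will come from the attracting and repelling dynamics of $\varphi$ on $\Pb(\Kb^{d+1})$.

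First I would set up the dynamics coming from the eigenvalue hypothesis. Let $E^+\subset \Kb^{d+1}$ be the sum of the generalized eigenspaces of $\varphi$ whose eigenvalue has maximum modulus and $E^-$ the sum of the remaining generalized eigenspaces; since $\varphi$ has eigenvalues of different absolute value, $E^+ \oplus E^- = \Kb^{d+1}$ is a nontrivial decomposition. A routine argument (passing to a high power of $\varphi$ to isolate the dominant eigenvalue block) shows that $\varphi^n p \to \Pb(E^+)$ for every $p \in \Pb(\Kb^{d+1}) \setminus \Pb(E^-)$. Applying the same analysis to $\varphi^{-1}$ gives a dominant subspace $F^-\subset \Kb^{d+1}$ for $\varphi^{-1}$, disjoint from $\Pb(E^+)$ in projective space, such that $\varphi^{-n} q \to \Pb(F^-)$ for generic $q$.

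Next I would use properness of the $\Aut(\Omega)$-action on $\Omega$ (Proposition~\ref{prop:closed}) to extract limit points. For any $p \in \Omega$, the orbit $\{\varphi^n p\}$ cannot remain in a compact subset of $\Omega$, so it must accumulate on $\partial \Omega$; combined with the dynamics of the preceding step this yields a point $x^+ \in \Pb(E^+)\cap \Lc(\Omega)$. The same reasoning applied to $\varphi^{-1}$ yields $x^- \in \Pb(F^-)\cap \Lc(\Omega)$, and the disjointness $\Pb(E^+)\cap\Pb(F^-) = \emptyset$ gives $x^+ \neq x^-$.

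The main obstacle, and the role played by Proposition~\ref{prop:bi_prox_str}, is the last step: identifying the $\Kb$-tangent hyperplanes at $x^\pm$ with the complementary invariant $\Kb$-subspaces of $\varphi$, and thereby concluding $T_{x^+}^{\Kb}\partial\Omega \neq T_{x^-}^{\Kb}\partial\Omega$. The heuristic is that the repelling invariant hyperplane $\Pb(E^-)$ through $x^+$ cannot meet $\Omega$ (forward iteration would otherwise force $\Omega$ to accumulate on $\Pb(E^+)$ from both sides, contradicting $\partial\Omega$ being a $C^2$ hypersurface near $x^+$), so $\Pb(E^-)$ supports $\partial\Omega$ at $x^+$; by the $C^2$-hypothesis this supporting $\Kb$-hyperplane must coincide with the unique $\Kb$-tangent hyperplane $T_{x^+}^{\Kb}\partial\Omega$. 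Symmetrically for $x^-$. Transversality of $\Pb(E^+)$ and $\Pb(F^-)$ then prevents these two hyperplanes from being equal. Once Proposition~\ref{prop:bi_prox_str} gives this conclusion rigorously (uniformly across the three fields, which is the delicate point in the quaternionic case), Theorem~\ref{thm:C2} applied to $x^+,x^-$ yields that $\Omega$ is projectively isomorphic to $\Bc$.
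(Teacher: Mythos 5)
Your proposal is correct and is essentially the paper's proof: the hypothesis that $\varphi$ has eigenvalues of different absolute value is exactly $\sigma_{d+1}(\varphi)>\sigma_1(\varphi)$, so Proposition~\ref{prop:bi_prox_str} makes $\varphi$ bi-proximal with $x^{\pm}_{\varphi}\in\Lc(\Omega)$ and $T^{\Kb}_{x^{\pm}_{\varphi}}\partial\Omega\cap\partial\Omega=\{x^{\pm}_{\varphi}\}$, forcing the two tangent hyperplanes to be distinct, and Theorem~\ref{thm:C2} concludes. One slip in your heuristic (harmless, since you delegate the rigorous step to Proposition~\ref{prop:bi_prox_str}): $\Pb(E^-)$ does not pass through $x^+$ --- it is the tangent hyperplane at $x^-$ --- whereas $T^{\Kb}_{x^+}\partial\Omega$ is the $\varphi$-invariant hyperplane complementary to the bottom eigenline, and the two hyperplanes differ because each meets $\partial\Omega$ only in its own fixed point.
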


\subsection*{Acknowledgments} 

This material is based upon work supported by the National Science Foundation under Grant Number NSF 1400919.

\section{Some prior results} 

There is a long history of rigidity results involving the structure of the boundary and the size of $\Aut(\Omega)$. Many previous results make at least one of the following assumptions:
\begin{enumerate}
\item that $\Aut(\Omega)$ (or a discrete subgroup) acts co-compactly on $\Omega$,
\item  $\partial \Omega$ is $C^2$ and satisfies some curvature condition (for instance strong convexity or strong pseudo-convexity), or
\item $\Omega$ is convex.
\end{enumerate}
In this brief section we will survey some of these results in the real projective, the complex projective, and the holomorphic setting. 

\subsection{The real projective setting} As mentioned in Remark~\ref{rmk:convex_divisible} there are many proper domains $\Omega \subset \Pb(\Rb^{d+1})$ with $C^1$ boundary which admit a co-compact action by $\Aut(\Omega)$. However, rigidity appears if one assumes higher regularity. For instance Benoist proved the following characterization of the unit ball in real projective space:

\begin{theorem}\cite[Theorem 1.3]{B2004}
Suppose $\Omega \subset \Pb(\Rb^{d+1})$ is proper convex domain and there exists a discrete group $\Gamma \leq \Aut(\Omega)$ which acts co-compactly on $\Omega$. If $\partial \Omega$ is $C^{1,\alpha}$ for all $\alpha \in [0,1)$ then $\Omega$ is projectively isomorphic to $\Bc$. 
\end{theorem}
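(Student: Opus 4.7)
The plan is to invoke the structure theory of divisible convex domains developed by Benoist. First, I would use Benoist's theorem that any divisible proper convex domain with $C^1$ boundary is automatically strictly convex and its Hilbert metric $d_\Omega$ is Gromov hyperbolic, so $\Gamma$ is word hyperbolic. A consequence is that each infinite order element $\gamma \in \Gamma$, viewed inside $\PGL_{d+1}(\Rb)$, is bi-proximal: it has a unique attracting fixed point $x_\gamma^+ \in \partial \Omega$ and a unique repelling fixed point $x_\gamma^- \in \partial \Omega$, and by the North-South convergence dynamics of $\Gamma$ on $\partial \Omega$ the pairs $(x_\gamma^+, x_\gamma^-)$ are dense in $\partial \Omega \times \partial \Omega$.

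The heart of the argument is then to relate the H\"older regularity of $\partial \Omega$ at $x_\gamma^+$ to the moduli of the eigenvalues
\begin{align*}
\lambda_1 > \lambda_2 \geq \dots \geq \lambda_d > \lambda_{d+1}
\end{align*}
of a lift of $\gamma$. Working in an affine chart centered at $x_\gamma^+$, iterating $\gamma^{-n}$ on a point of $\partial \Omega \setminus \{x_\gamma^-\}$ produces a sequence of boundary points converging to $x_\gamma^+$ at a rate governed by the ratios $\lambda_{i}/\lambda_1$, and the separation from the tangent hyperplane at $x_\gamma^+$ is governed by $\lambda_{d+1}/\lambda_1$. Combining this orbit analysis with strict convexity and the $C^1$ hypothesis yields a closed-form expression for the optimal H\"older exponent $\alpha(\gamma)$ of $\partial \Omega$ at $x_\gamma^+$ as a function of the $\lambda_i$, with the crucial feature that $\alpha(\gamma) = 1$ is equivalent to the rigid spectral identity $\lambda_2 = \dots = \lambda_d = \sqrt{\lambda_1 \lambda_{d+1}}$.

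Finally, the hypothesis that $\partial \Omega$ is $C^{1,\alpha}$ for every $\alpha < 1$ forces $\alpha(\gamma) = 1$ for every infinite order $\gamma \in \Gamma$. This spectral identity, together with density of the proximal fixed points in $\partial \Omega$ and the irreducibility of the $\Gamma$-action (which follows from cocompactness since $\Omega$ is a proper convex set), pins the Zariski closure of the lift of $\Gamma$ in $\GL_{d+1}(\Rb)$ to be conjugate to the orthogonal group $\OO(1,d)$ preserving a quadratic form of signature $(1,d)$. Hence $\Gamma$ preserves a ball $\Bc'$ projectively isomorphic to $\Bc$, and since it preserves $\Omega$ and acts cocompactly on it, one concludes $\Omega = \Bc'$. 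The main obstacle is the second step: extracting the explicit eigenvalue formula for the boundary regularity requires a delicate asymptotic analysis that is quite specific to the convex real projective setting, and it is also what forces the hypothesis to be stated in terms of all $\alpha < 1$ rather than a single value.
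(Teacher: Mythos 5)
This theorem is not proved in the paper at all: it appears in Section~2 as quoted background, imported verbatim from Benoist \cite[Theorem 1.3]{B2004}, so there is no in-paper argument to compare yours against. Judged against Benoist's actual proof (and Guichard's companion work on the regularity of divisible convex sets), your outline is essentially the standard one and is sound at the level of a sketch: (i) $C^1$ boundary plus divisibility forces strict convexity and Gromov hyperbolicity of $(\Omega, d_\Omega)$, whence every infinite-order $\gamma\in\Gamma$ is bi-proximal with attracting/repelling pairs dense in $\partial\Omega\times\partial\Omega$; (ii) the local H\"older exponent at $x_\gamma^+$ is controlled by the ratio $\log(\lambda_1/\lambda_{d+1})/\log(\lambda_1/\lambda_2)$, and letting $\alpha\to 1$ yields $\lambda_2\geq\sqrt{\lambda_1\lambda_{d+1}}$; note that to get the full identity $\lambda_2=\dots=\lambda_d=\sqrt{\lambda_1\lambda_{d+1}}$ you must also run the argument at $x_\gamma^-$ (equivalently for $\gamma^{-1}$, or on the dual convex set) to obtain the complementary inequality $\lambda_d\leq\sqrt{\lambda_1\lambda_{d+1}}$ — your sketch elides this but it is needed; (iii) the conclusion is cleanest phrased via Benoist's dichotomy that the Zariski closure of a group dividing a strictly convex, non-ellipsoidal $\Omega$ is all of $\SL_{d+1}(\Rb)$, in which case the limit cone of Jordan projections has nonempty interior, contradicting the codimension-one spectral constraint; hence $\Omega$ is an ellipsoid. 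Your version of (iii), pinning the Zariski closure to $\OO(1,d)$ directly and then identifying $\Omega$ with the invariant ball, amounts to the same thing. So: no gap in the strategy, but be aware that step (ii) is a genuine theorem requiring the asymptotic expansion of $\gamma^{-n}$ near the fixed point together with convexity, and that the "all $\alpha<1$" hypothesis is exactly what lets you pass to the limit in the resulting family of inequalities.
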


Recall that a open bounded set $\Omega \subset \Rb^d$ is a called \emph{strongly convex} if $\Omega = \{ x \in \Rb^d : r(x) < 0\}$ for some $C^2$ function $r: \Rb^d \rightarrow \Rb$ with $\nabla r \neq 0$ near $\partial \Omega$ and 
\begin{align*}
\operatorname{Hess}_x(r)(v,v) > 0
\end{align*}
for all $x \in \partial \Omega$ and $v \in T_x \partial \Omega$. A proper domain $\Omega \subset \Pb(\Rb^{d+1})$ is called \emph{strongly convex} if it is a strongly convex set in some (hence any) affine chart which contains it as a bounded set.  With this terminology Soci{\'e}-M{\'e}thou proved the following rigidity result:

\begin{theorem}\cite{SM2002}
Suppose $\Omega \subset \Pb(\Rb^{d+1})$ is a strongly convex open set. If $\Aut(\Omega)$ is non-compact then 
$\Omega$ is projectively isomorphic to $\Bc$. 
\end{theorem}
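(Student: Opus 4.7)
My plan is to execute a blow-up argument at a limit point of $\Aut(\Omega)$, matching the rescaled domain with its osculating paraboloid and then appealing to the fact that this paraboloid is projectively a ball.

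By properness of the $\Aut(\Omega)$-action on $\Omega$ (Proposition~\ref{prop:closed}), the non-compactness of $\Aut(\Omega)$ gives a point $p_0 \in \Omega$ and a sequence $\varphi_n \in \Aut(\Omega)$ with $\varphi_n p_0 \to x_0 \in \partial\Omega$. Using the $C^2$ strong convexity of $\partial\Omega$, I fix a projective frame in which $\Omega$ is bounded, $x_0=0$, $T_{x_0}\partial\Omega = \{y_1=0\}$, and, after a linear change in the $y'$ variables, the local boundary equation reads
\begin{equation*}
\partial\Omega = \bigl\{y_1 = \norm{y'}^2 + R(y')\bigr\} \quad \text{with } R(y') = o(\norm{y'}^2),
\end{equation*}
and $\Omega$ lying on the side $y_1 > \norm{y'}^2 + R(y')$. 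I then introduce the one-parameter family of projective transformations $T_\epsilon(y_1,y') = (\epsilon^{-2}y_1,\ \epsilon^{-1}y')$ and denote the osculating paraboloid by $\mathcal{P} = \{y_1 > \norm{y'}^2\}$, which is properly convex (it is bounded in another affine chart of $\Pb(\Rb^{d+1})$).

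Strong convexity forces the lower bound $(\varphi_n p_0)_1 \geq c\norm{(\varphi_n p_0)'}^2$, so setting $\epsilon_n^2 := (\varphi_n p_0)_1$ and absorbing a bounded translation in the $y'$-direction arranges $T_{\epsilon_n}(\varphi_n p_0) \to (1,0) \in \mathcal{P}$. Substituting $T_\epsilon$ into the defining inequality and using the $o(\norm{y'}^2)$ control on $R$ shows that $T_{\epsilon_n}\Omega$ converges to $\mathcal{P}$ in the Hausdorff topology on closed subsets of $\Pb(\Rb^{d+1})$; the portion of $\Omega$ away from $x_0$ contracts onto the single boundary point $[0:1:0]$ of $\mathcal{P}$ at infinity. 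Setting $\psi_n := T_{\epsilon_n} \circ \varphi_n \in \PGL_{d+1}(\Rb)$, the pointed properly convex sets $(\psi_n\Omega,\ \psi_n p_0)$ then converge to $(\mathcal{P},(1,0))$.

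The key extraction step now uses Benz\'ecri's theorem: the $\PGL_{d+1}(\Rb)$-action on pointed properly convex open subsets of $\Pb(\Rb^{d+1})$ is proper, and since both $(\Omega,p_0)$ (fixed) and $(\psi_n\Omega,\psi_n p_0) \to (\mathcal{P},(1,0))$ remain in compact subsets of the pointed convex set space, $\psi_n$ subconverges to some $\psi_\infty \in \PGL_{d+1}(\Rb)$ with $\psi_\infty(\Omega) = \mathcal{P}$. Finally, the projective closure of $\partial\mathcal{P}$ is a smooth real quadric of signature $(1,d)$, just like the projective closure of $\partial\Bc$, so any projective transformation matching the two quadrics induces a projective isomorphism $\mathcal{P} \cong \Bc$; composing with $\psi_\infty$ yields $\Omega \cong \Bc$. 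The principal difficulty is the identification of the rescaled limit $T_{\epsilon_n}\Omega$ with the non-degenerate properly convex domain $\mathcal{P}$: this is exactly where \emph{strong} (not merely strict) convexity enters, since a degenerate Hessian at $x_0$ would produce an osculating limit that is not properly convex and the Benz\'ecri-type extraction would break down.
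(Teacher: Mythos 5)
First, a point of comparison: the paper does not prove this theorem --- it is quoted from \cite{SM2002} in the survey section --- so there is no internal proof to measure you against. Your rescaling-plus-Benz\'ecri strategy is a legitimate and by now standard route to results of this type (essentially Frankel's blow-up method transplanted to the real projective setting, rather than the Hilbert-metric comparison arguments of the cited source), and most of the steps are sound: properness of the action (Proposition~\ref{prop:closed}) does produce an orbit point escaping to $\partial\Omega$; the local graph normalization at a $C^2$ strongly convex boundary point is correct; the anisotropic rescaling does push the part of $\overline{\Omega}$ away from $x_0$ onto the single point $[0:1:0:\dots:0]$ and the rescaled convex bodies do converge to the paraboloid; the paraboloid is the interior of a quadric of signature $(1,d)$ and hence projectively a ball; and Benz\'ecri's properness theorem for the action on pointed properly convex domains is exactly the right extraction tool.

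The genuine gap is the normalization of the marked point. Setting $\epsilon_n^2 = (\varphi_n p_0)_1$, the defining inequality only gives $\limsup_n \norm{\epsilon_n^{-1}(\varphi_n p_0)'} \leq 1$, so after passing to a subsequence $T_{\epsilon_n}(\varphi_n p_0) \to (1,a)$ with $\abs{a} \leq 1$, and $\abs{a}=1$ genuinely occurs when the orbit approaches $x_0$ tangentially --- already for $\Omega = \Bc$ itself, where the orbit of $p_0$ is all of $\Bc$, one can choose $\varphi_n$ so that $\varphi_n p_0$ approaches a boundary point tangentially. In that case $(1,a) \in \partial\mathcal{P}$, the pointed domains $(\psi_n\Omega,\psi_n p_0)$ do \emph{not} stay in a compact subset of the pointed-domain space, and the Benz\'ecri extraction breaks down. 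The ``bounded translation in the $y'$-direction'' does not repair this: a plain translation moves the limiting paraboloid along with the point, and the parabolic automorphism of $\mathcal{P}$ sends $(1,a)$ to $(1-\abs{a}^2,0)$, which is still a boundary point when $\abs{a}=1$. The standard repair is to center the blow-up not at the fixed limit $x_0$ but at the moving foot point $q_n \in \partial\Omega$ below $\varphi_n p_0$: translate $q_n$ to the origin and $T_{q_n}\partial\Omega$ to $\{y_1=0\}$, and let $\epsilon_n^2$ be the resulting height of $\varphi_n p_0$ over $q_n$. Uniform $C^2$ strong convexity over the compact boundary gives uniform control of the graph functions at all the $q_n$, the rescaled marked point is then exactly $(1,0)$, and the rest of your argument closes as written.
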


\begin{remark} Colbois and Verovic~\cite{CV2002} gave an alternative proof with the additional assumption that $\partial \Omega$ is $C^3$. Later Jo~\cite{J2008} and Yi~\cite{Y2008} proved that it is enough to assume that $\Lc(\Omega)$ contains a point $x$ where $\partial \Omega$ is strongly convex in a neighborhood of $x$. \end{remark}

\subsection{The complex projective setting}

The complex projective setting is more rigid than the real projective setting especially when one assumes that there is a discrete group in $\Gamma \leq \Aut(\Omega)$ which acts co-compactly on $\Omega$.

In $\Pb(\Cb^2)$ there do exist non-symmetric proper domains which admit a co-compact action by a discrete group in $\Aut(\Omega)$. However if $\partial \Omega$ has very weak regularity then a result of Bowen implies that $\Omega$ must be a symmetric domain:

\begin{theorem}\cite{B1979}
Suppose $\Omega \subset \Pb(\Cb^2)$ is a proper domain and $\partial \Omega$ is a Jordan curve with Hausdorff dimension one. If there exists a discrete group $\Gamma \leq \Aut(\Omega)$ which acts co-compactly on $\Omega$ then 
$\Omega$ is projectively isomorphic to $\Bc$. 
\end{theorem}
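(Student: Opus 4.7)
The plan is to translate the hypothesis into the classical dictionary of Kleinian groups on the Riemann sphere and then invoke Bowen's Hausdorff-dimension rigidity theorem for quasi-Fuchsian limit sets. First, identify $\Pb(\Cb^2)$ with $S^2$, so that $\PGL_2(\Cb) = \PSL_2(\Cb)$ acts as the M\"obius group; via the boundary identification $S^2 = \partial_\infty \Hb_\Rb^3$ this realizes $\Gamma$ as a Kleinian group, with $\partial \Omega$ a $\Gamma$-invariant Jordan curve separating $S^2$ into two topological disks $\Omega$ and $\Omega'$.

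Next I would show that $\Gamma$ is quasi-Fuchsian with limit set $\Lambda(\Gamma) = \partial \Omega$. Proper discontinuity of $\Gamma$ on $\Omega$ (and by symmetry on the other $\Gamma$-invariant Jordan domain $\Omega'$) gives $\Lambda(\Gamma) \subset \partial \Omega$. For the reverse inclusion, fix any $x \in \partial \Omega$, choose a sequence $q_n \in \Omega$ with $q_n \to x$, and use cocompactness of $\Gamma$ on $\Omega$ to find $\gamma_n \in \Gamma$ with $\gamma_n q_n$ remaining in a compact subset of $\Omega$; a normal-families argument then shows that after passing to a subsequence, $\gamma_n^{-1}$ converges locally uniformly on $\Omega$ to the constant map $x$, so $x \in \Lambda(\Gamma)$. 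Hence $\Lambda(\Gamma) = \partial \Omega$, and since $\Gamma$ has two invariant simply connected components of discontinuity with compact quotients, it is quasi-Fuchsian.

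The conclusion is now immediate from Bowen's theorem \cite{B1979}: the limit set of a quasi-Fuchsian Kleinian group has Hausdorff dimension at least one, with equality if and only if the group is conjugate in $\PSL_2(\Cb)$ to a Fuchsian group, in which case its limit set is a round circle in $S^2$. Since $\partial \Omega = \Lambda(\Gamma)$ is assumed to have Hausdorff dimension one, $\partial \Omega$ must be a round circle, making $\Omega$ a round open disk in $\Pb(\Cb^2)$ and therefore projectively isomorphic to $\Bc$.

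The main obstacle is of course Bowen's dimension-rigidity theorem itself, which supplies the deep nontrivial input; the surrounding reduction is a routine change of language. Within the reduction, the only real technical point is confirming that cocompactness of $\Gamma$ on the single component $\Omega$, together with $\partial \Omega$ being a Jordan curve, places us in the quasi-Fuchsian setting rather than that of a more general function group with wandering components or parabolic boundary cusps.
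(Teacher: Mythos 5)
This statement appears in the paper's survey of prior results and is given there purely as a citation of Bowen's theorem, with no proof supplied; so there is nothing in the paper to compare your argument against line by line. Your reduction is the standard one and is essentially correct: identifying $\Pb(\Cb^2)$ with the Riemann sphere, showing $\Lambda(\Gamma)=\partial\Omega$ (your cocompactness argument is exactly the one the paper uses for Corollary~\ref{cor:cocpct}), and then invoking Bowen's dimension rigidity for quasicircles. Two points deserve more care than you give them. First, the step ``two invariant simply connected components of discontinuity, hence quasi-Fuchsian'' is the genuinely nontrivial part of the reduction and needs an explicit citation: you should appeal to Maskit's theorem that a finitely generated Kleinian group with two invariant components is quasi-Fuchsian (finite generation follows since $\Omega/\Gamma$ is a compact $2$-orbifold), and note that compactness of $\Omega/\Gamma$ forces the underlying Fuchsian group to be cocompact, which is the hypothesis Bowen's theorem actually requires; you also assert compactness of $\Omega'/\Gamma$ before it is available, though nothing in your argument uses it. Second, $\Gamma$ may have torsion, whereas Bowen's theorem is usually stated for surface groups; this is harmless (pass to a torsion-free finite-index subgroup via Selberg's lemma, which has the same limit set), but it should be said. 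With those two citations added, your proof is complete.
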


In $\Pb(\Cb^3)$ the co-compact case is even more rigid and recent work of Cano and Seade implies the following:

\begin{theorem}\cite{CS2014}
Suppose $\Omega \subset \Pb(\Cb^3)$ is a proper domain and $\Gamma \leq \Aut(\Omega)$ is a discrete group which acts co-compactly on $\Omega$. Then
$\Omega$ is projectively isomorphic to $\Bc$. 
\end{theorem}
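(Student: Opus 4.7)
The plan is to reduce the statement to Corollary~\ref{cor:co_cpct}, which says that for $\Kb = \Cb$, a proper domain $\Omega \subset \Pb(\Cb^{d+1})$ with $C^1$ boundary on which $\Aut(\Omega)$ acts co-compactly must be projectively isomorphic to $\Bc$. Co-compactness of the discrete $\Gamma \leq \Aut(\Omega)$ automatically forces $\Lc(\Omega) = \partial \Omega$ (Corollary~\ref{cor:cocpct}), so the entire content of the theorem reduces to establishing that $\partial \Omega$ is a $C^1$ hypersurface in $\Pb(\Cb^3)$.

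To prove $\partial \Omega$ is $C^1$ I would exploit the low dimension. Co-compactness combined with Proposition~\ref{prop:bi_prox_str} produces an element $\gamma \in \Gamma$ with three eigenvalues of pairwise distinct absolute value, hence with an attracting fixed point $x^+ \in \partial \Omega$ and a repelling fixed point $x^- \in \partial \Omega$. In affine coordinates adapted to the eigenbasis of $\gamma$, the iterates $\gamma^n$ contract $\partial \Omega \setminus \{x^-\}$ onto $x^+$ at a linear rate, and the boundary germ at $x^+$ is self-similar under a diagonal linear contraction with two contracting eigenvalues $\lambda, \mu$ of different modulus. Combined with the fact that $\partial \Omega$ is a topologically embedded real hypersurface separating $\Omega$ from its complement in the ambient $\Pb(\Cb^3)$, this self-similarity should pin the germ down to be the graph of a $C^1$ function of the coordinates transverse to the weakest-contracting direction.

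Once local $C^1$-ness is established at $x^+$, global regularity follows by a standard propagation argument: since $x^+ \in \Lc(\Omega)$ and $\Gamma$ acts co-compactly on $\Omega$, for every $y \in \partial \Omega$ one may find a sequence $\gamma_n \in \Gamma$ with $\gamma_n x^+ \to y$ whose differentials have bounded conformal distortion on a fixed neighborhood of $x^+$ in $\partial \Omega$, so the local $C^1$ description at $x^+$ transfers to a $C^1$ description at $y$. Corollary~\ref{cor:co_cpct} then concludes the proof. The hardest step, and the reason the theorem is stated only in $\Pb(\Cb^3)$, is the self-similarity-to-smoothness argument: in higher complex dimensions the eigenvalue spectrum of a biproximal element carries enough free parameters that fractal boundary behavior analogous to the real divisible convex sets of Remark~\ref{rmk:convex_divisible} cannot be excluded by soft arguments. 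In $\Pb(\Cb^3)$ the three-eigenvalue rigidity, together with the topological separation of $\Omega$ from its complement by $\partial \Omega$, should leave no room for such fractal behavior; carefully verifying this is where the bulk of the technical work lies.
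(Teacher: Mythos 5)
Your reduction has a genuine gap, and it sits exactly where you place ``the bulk of the technical work'': the claim that the boundary is $C^1$. First, the tools you invoke to get started are circular in this context. Proposition~\ref{prop:bi_prox_str} and Theorem~\ref{thm:bi_prox_exist} both take ``proper domain with $C^1$ boundary'' as a hypothesis, so you cannot use them to produce a bi-proximal element of $\Gamma$ before you know the regularity you are trying to establish; nothing in the paper gives you an element with eigenvalues of distinct moduli, or tangent hyperplanes at limit points, for a domain with merely topological boundary. Second, the key mechanism you propose --- self-similarity of the boundary germ under a proximal contraction forces a $C^1$ graph --- is precisely what fails in the known examples: the divisible convex sets of Remark~\ref{rmk:convex_divisible} have boundaries that are self-similar under bi-proximal automorphisms yet are not $C^2$ (and generically have irregular, ``fractal-like'' features), and in $\Pb(\Cb^2)$ there exist non-symmetric proper domains with co-compact discrete automorphism groups whose boundaries are fractal Jordan curves. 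Your sketch does not identify any concrete feature of $\Pb(\Cb^3)$ that blocks this behavior; ``three-eigenvalue rigidity should leave no room'' is a hope, not an argument, and as stated it would seem to apply just as well in $\Pb(\Cb^2)$, where the conclusion is false.

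The actual result is not proved in this paper at all: it is quoted from Cano and Seade~\cite{CS2014}, whose argument is of a completely different, complex-geometric nature. One passes to the compact quotient $\Omega/\Gamma$, which is a compact complex surface carrying a flat complex projective structure, and then invokes Kobayashi and Ochiai's classification~\cite{KO1980} of compact complex surfaces admitting projective structures to force uniformization by the ball. No boundary regularity is assumed or established along the way, which is why the theorem can dispense with the $C^1$ hypothesis that Corollary~\ref{cor:co_cpct} requires. If you want to pursue your route, the missing ingredient is a proof of $C^1$ regularity of $\partial\Omega$ from co-compactness alone in $\Pb(\Cb^3)$; that statement is not available in this paper and is not a soft consequence of dynamics of a single contraction.
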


It is worth noting that Cano and Seade's proof relies on Kobayashi and Ochiai's~\cite{KO1980} classification of compact complex surfaces with a projective structure. 

In higher dimensions we proved the following weaker version of Corollary~\ref{cor:co_cpct}:

\begin{theorem}\cite{Z2013}
Suppose $\Omega \subset \Pb(\Cb^{d+1})$ is a proper $\Cb$-convex domain and there exists a discrete group $\Gamma \leq \Aut(\Omega)$ which acts co-compactly on $\Omega$. If $\partial \Omega$ is $C^{1}$ then $\Omega$ is projectively isomorphic to $\Bc$.
\end{theorem}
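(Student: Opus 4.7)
My plan is to combine the geometry of the Kobayashi metric on $\Cb$-convex domains with a boundary renormalization argument in the spirit of Benoist's work on divisible real convex sets, adapted to the complex projective setting.

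First, I would show that on a proper $\Cb$-convex domain $\Omega$ the Kobayashi pseudo-metric $k_\Omega$ is a complete Finsler metric, exploiting the fact that $\Cb$-convexity supplies enough supporting complex hyperplanes to bound $k_\Omega$ below by a multiple of the inverse Euclidean distance to the boundary. The cocompact action of $\Gamma$ by $k_\Omega$-isometries together with the \v{S}varc--Milnor lemma then makes the orbit map $\Gamma \to (\Omega, k_\Omega)$ a quasi-isometry, and in particular $\Lc(\Omega) = \partial \Omega$. Using the $C^1$ hypothesis, the complex tangent hyperplane $T_x^{\Cb}\partial \Omega$ is defined at every $x \in \partial \Omega$, and $\Cb$-convexity forces $T_x^{\Cb}\partial \Omega \cap \overline{\Omega} = \{x\}$.

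The heart of the argument is a renormalization. Fix $x \in \partial \Omega$ and choose $\gamma_n \in \Gamma$ with $\gamma_n p \to x$. I would select dilations $g_n \in \PGL_{d+1}(\Cb)$, adapted to affine coordinates in which $T_x^{\Cb}\partial \Omega$ is at infinity, so that $g_n \gamma_n p$ stays in a fixed compact subset of a standard chart. After passing to a subsequence, the domains $g_n \Omega$ should converge in the local Hausdorff topology to an unbounded $\Cb$-convex limit $\widehat{\Omega}$. The $C^1$ boundary hypothesis at $x$ guarantees that the blow-up of $\partial \Omega$ at $x$ is precisely $T_x^{\Cb}\partial \Omega$, so $\widehat{\Omega}$ is an unbounded $\Cb$-convex model whose boundary contains a complex hyperplane, and the conjugates $g_n \gamma_n g_n^{-1}$ accumulate on a nontrivial one-parameter subgroup of $\Aut(\widehat{\Omega})$ acting by dilations. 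A classification of such models should then identify $\widehat{\Omega}$ projectively with the Siegel upper half-space realization of $\Bc$.

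Finally, the existence of this Siegel model at every boundary point, combined with a projective rigidity argument (two proper $\Cb$-convex domains whose boundaries agree on a relatively open set in some projective chart must coincide), would force $\Omega$ itself to be projectively isomorphic to $\Bc$. I expect the main obstacle to be the renormalization step: the scaling sequence $g_n$ must be chosen so that the limit $\widehat{\Omega}$ neither degenerates to a hyperplane nor escapes to infinity, and this requires controlling the eigenvalue ratios of the $\gamma_n$ via their projective dynamics. This is where $\Cb$-convexity, rather than mere convexity or pseudoconvexity, plays the essential role in guaranteeing the existence of linear support functionals adapted to the rescaling.
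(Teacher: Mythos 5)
Your overall architecture (invariant metric $+$ cocompactness $\Rightarrow$ $\Lc(\Omega)=\partial\Omega$, then a renormalization at a boundary point, then rigidity) is the right family of ideas, but two of your intermediate claims are false as stated and the renormalization step, which you correctly flag as the main obstacle, cannot work in the form you describe. First, $\Cb$-convexity plus $C^1$ boundary does \emph{not} force $T_x^{\Cb}\partial\Omega\cap\overline{\Omega}=\{x\}$; it only gives $T_x^{\Cb}\partial\Omega\cap\Omega=\emptyset$. A convex (hence $\Cb$-convex) domain with $C^\infty$ boundary can contain an analytic disc in $\partial\Omega$, e.g.\ a bounded smoothing of $\{\Real(z_1)>\phi(\abs{z_2})\}$ with $\phi$ convex and vanishing on $[0,1]$, and then the closed $\Cb$-face of $0$ is positive-dimensional. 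Triviality of the faces at points of the limit set is one of the genuine outputs of the dynamics (in this paper it is parts (2)--(3) of Proposition~\ref{prop:bi_prox_str}, proved by pushing a putative second face point with $\varphi^{-m}$ into $T_{x^+}^{\Kb}\partial\Omega\cap T_{x^-}^{\Kb}\partial\Omega$ and contradicting $C^1$-ness); it cannot be taken as an input. Similarly, your closing rigidity claim --- that two proper $\Cb$-convex domains whose boundaries agree on a relatively open set must coincide --- is false (two distinct convex domains can share an open boundary piece), so ``a Siegel model at every boundary point'' does not by itself finish the proof.

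Second, and more seriously, the blow-up you propose is governed by first-order data: if $g_n$ is chosen so that $g_n\Omega$ converges to the tangent cone of $\Omega$ at $x$, the $C^1$ hypothesis gives only the half-space $\{\Real(z_1)>0\}\subset\Cb^d$, which for $d\geq 2$ is not projectively isomorphic to $\Bc$; to see the Siegel paraboloid $\{\Real(z_1)>\sum\abs{z_i}^2\}$ you need second-order control of $\partial\Omega$, which is exactly what the $C^1$ hypothesis withholds. The way around this is algebraic rather than metric: one extracts a bi-proximal element of $\Aut(\Omega)$ (Theorem~\ref{thm:bi_prox_exist}), puts it in the standard form of Section 7, and uses its contracting dynamics to show the local defining function satisfies $F(x,z)=F(0,z)$, i.e.\ is independent of the imaginary normal direction; this produces an entire copy of $\Aut_0(\{\Real(z)>0\})$ inside $\Aut_0(\Omega)$ (Theorem~\ref{thm:blow_up}) without ever knowing the second-order shape of the boundary. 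One then shows the limit set is a homogeneous $C^\infty$ submanifold equal to all of $\partial\Omega$, and only at that point does a Hessian computation (the $C^2$ argument of Theorem~\ref{thm:C2}, using the homogeneity $F(wz)=\abs{w}^2F(z)$) identify $\Omega$ with $\Bc$. Your Kobayashi-metric completeness step and the \v{S}varc--Milnor observation are fine (the paper uses the projective cross-ratio metric $C_\Omega$ instead, which has the advantage of interacting directly with the linear structure and the dual set $\Omega^*$), but as written the proposal is missing the mechanism that converts a merely $C^1$ boundary into the quadratic model.
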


\begin{remark}
An open set $\Omega \subset \Pb(\Cb^{d+1})$ is called $\Cb$-convex if its intersection with any complex projective line is simply connected. Surprisingly, this weak form of convexity has strong analytic implications. See~\cite{APS2004, H2007} for more details.
\end{remark}

\subsection{The holomorphic setting} 

There is also a long history of rigidity results involving bounded domains $\Omega \subset \Cb^d$ and their bi-holomorphic automorphism group $\Aut_{\hol}(\Omega)$. We will only mention a few results and refer the reader to the survey articles~\cite{IK1999} and~\cite{K2013} for more details.

The most classical is the well known characterization of the unit ball due to Rosay~\cite{R1979} and Wong~\cite{W1977}. Recall that a bounded domain $\Omega \subset \Cb^d$ is called \emph{strongly psuedo-convex} if $\Omega$ has $C^2$ boundary and the Levi-form at each point in the boundary is positive definite. 

\begin{theorem}[Rosay and Wong Ball Theorem]
Suppose $\Omega \subset \Cb^d$ is a bounded strongly pseudo-convex domain. If $\Aut_{\hol}(\Omega)$ is non-compact then $\Omega$ is bi-holomorphic to the unit ball. 
\end{theorem}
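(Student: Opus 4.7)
The plan is to use the Pinchuk scaling method to construct a biholomorphism from $\Omega$ onto the unit ball $\Bc \subset \Cb^d$. First, I would show that $\Aut_{\hol}(\Omega)$ being non-compact forces orbits to accumulate on $\partial \Omega$. Fix $p \in \Omega$. Since $\Omega$ is bounded, Montel's theorem makes $\Aut_{\hol}(\Omega)$ a normal family; if $\{\varphi_n(p)\}$ stayed in a compact subset of $\Omega$, a subsequential limit $\varphi$ of $\{\varphi_n\}$ would be a non-degenerate holomorphic self-map of $\Omega$, and a symmetric argument applied to $\{\varphi_n^{-1}\}$ would yield an inverse — contradicting non-compactness. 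Hence, after passing to a subsequence, $\varphi_n(p) \to x \in \partial \Omega$.

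Next, I would use strong pseudo-convexity to set up local coordinates near $x$. Standard normal-form arguments produce biholomorphic coordinates $(z, w) \in \Cb^{d-1} \times \Cb$ centered at $x$ in which the defining function of $\Omega$ reads
\begin{align*}
r(z,w) = \Real(w) + \abs{z}^2 + o(\abs{z}^2 + \abs{w}),
\end{align*}
so the osculating model at $x$ is the Siegel half-space $\Sigma = \{\Real(w) + \abs{z}^2 < 0\}$, which is biholomorphic to $\Bc$ via a Cayley-type map. I would then perform Pinchuk's anisotropic scaling: let $\varepsilon_n$ be the boundary distance of $\varphi_n(p)$, let $q_n \in \partial \Omega$ be the nearest boundary point, and define affine dilations $T_n$ which scale by $\varepsilon_n^{-1}$ along the complex normal and by $\varepsilon_n^{-1/2}$ along the complex tangential directions (composed with a translation sending $q_n$ to the origin). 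The images $T_n(\Omega)$ converge in the local Hausdorff sense to $\Sigma$, and $T_n(\varphi_n(p))$ stays at a fixed, bounded depth inside $\Sigma$.

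Third, I would consider the holomorphic maps $\Phi_n := T_n \circ \varphi_n : \Omega \to T_n(\Omega)$. Since each $T_n(\Omega)$ avoids a fixed hyperplane in $\Cb^d$, Montel's theorem applies and a subsequence $\Phi_n$ converges locally uniformly to a holomorphic $\Phi : \Omega \to \overline{\Sigma}$. Symmetrically, $T_n(\varphi_n(p))$ has a convergent subsequence inside $\Sigma$, and applying the same scaling idea to $\varphi_n^{-1}$ (centered now at the accumulation point of the inverse orbit, which one argues lies again on $\partial \Omega$ or handles via the same scaling about $T_n(\varphi_n(p))$) produces a limit $\Psi : \Sigma \to \overline{\Omega}$.

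Finally, I would show $\Phi$ and $\Psi$ are mutually inverse biholomorphisms. The key technical input is that at a strongly pseudo-convex boundary point the Kobayashi/Carath\'eodory infinitesimal metric blows up at the known Graham rate, comparable to the standard metric on $\Sigma$ after scaling. This metric invariance under $\Phi_n$, passed to the limit, forces $d\Phi(p)$ to be non-singular, so $\Phi(\Omega) \subset \Sigma$ (by the open mapping theorem together with $\Phi(p) \in \Sigma$), and $\Psi \circ \Phi$ fixes $p$ with the identity differential. A Cartan uniqueness argument then gives $\Psi \circ \Phi = \Id_{\Omega}$ and $\Phi \circ \Psi = \Id_{\Sigma}$, so $\Omega \cong \Sigma \cong \Bc$. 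The main obstacle throughout is ruling out the degeneration of $\Phi$ to a map into $\partial \Sigma$ or to a lower-dimensional image; this is precisely where strong pseudo-convexity (via the Graham estimates on the invariant metrics) does the essential work.
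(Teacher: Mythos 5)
This theorem is quoted in the paper as a prior result of Rosay and Wong, with no proof supplied, so there is nothing internal to compare your argument against; I can only assess your sketch on its own terms. What you outline is the Pinchuk scaling proof, which is a standard and correct route to the Wong--Rosay theorem, though it is genuinely different from the original arguments (Wong used the Eisenman--Kobayashi invariant volume forms and Rosay the Carath\'eodory/Kobayashi metric comparison; the scaling method came later and has the advantage of localizing the hypothesis to a single strongly pseudo-convex orbit accumulation point, which is exactly the strengthening the paper alludes to after the theorem statement).

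Two points in your sketch deserve more care. First, you only establish $\varphi_n(p) \rightarrow x$ for the single point $p$, but the composition $T_n \circ \varphi_n$ is meaningful on a compact set $K \subset \Omega$ only once $\varphi_n(K)$ lies inside the local normal-form coordinate patch at $x$; you need the stronger statement that $\varphi_n \rightarrow x$ uniformly on compacta, which follows from the normal-families limit $h$ having image in $\partial \Omega$ together with a peak-function or plurisubharmonicity argument at the strongly pseudo-convex point $x$ forcing $h \equiv x$. Second, the passage to the limit $\Phi$ and the construction of the inverse $\Psi$ require a precise notion of convergence of the varying targets $T_n(\Omega)$ (Carath\'eodory kernel convergence), and the non-degeneracy of $\Phi$ is exactly where the Graham boundary estimates for the Kobayashi metric enter, as you say; without them $\Phi$ could collapse into $\partial \Sigma$. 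With those two points filled in, the argument closes correctly via Cartan uniqueness and the Cayley equivalence $\Sigma \cong \Bc$.
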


In fact it is enough to assume that the limit set contains a point $x$ where $\partial \Omega$ is  strongly pseudo-convex in a neighborhood of $x$ (see~\cite{R1979}) . Thus one obtains the following characterization of the unit ball:

\begin{corollary}
Suppose $\Omega \subset \Cb^d$ is a bounded domain with $C^2$ boundary. If $\Aut_{\hol}(\Omega)$ acts co-compactly on $\Omega$ then $\Omega$ is bi-holomorphic to the unit ball. 
\end{corollary}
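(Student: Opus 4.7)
The plan is to reduce the corollary to the strengthened form of the Rosay--Wong theorem recalled in the preceding paragraph, which only requires the limit set of $\Aut_{\hol}(\Omega)$ to contain a point at which $\partial \Omega$ is strongly pseudo-convex in a neighborhood. Two ingredients must be put in place: the existence of a strongly pseudo-convex boundary point, and the identification $\Lc(\Omega) = \partial \Omega$.

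First, I would produce a strongly pseudo-convex point by a circumscribed-ball argument. Choose a closed ball $B(p_0, R)$ of minimal radius containing $\overline{\Omega}$. By minimality $\partial B \cap \overline{\Omega} \neq \emptyset$, and any such point $x_0$ lies in $\partial \Omega$. Since $\Omega \subset B(p_0, R)$, no tangent direction to $\partial \Omega$ at $x_0$ can point out of $B$, so the tangent planes $T_{x_0}\partial \Omega$ and $T_{x_0}\partial B$ coincide. Writing both hypersurfaces as $C^2$ graphs over this common tangent plane and comparing Hessians shows that any defining function for $\Omega$ has Hessian bounded below by $1/R$ on $T_{x_0}\partial \Omega$. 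Hence $\partial \Omega$ is strongly convex, and in particular strongly pseudo-convex, in a neighborhood of $x_0$.

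Second, I would show that every boundary point lies in $\Lc(\Omega)$. Because $\Omega$ is not compact, co-compactness of the action forces $\Aut_{\hol}(\Omega)$ to be non-compact; by H.\ Cartan's classical theorem this action is also proper. Fix a compact $K \subset \Omega$ with $\Aut_{\hol}(\Omega) \cdot K = \Omega$. Given $y \in \partial \Omega$, choose $q_n \in \Omega$ with $q_n \to y$ and write $q_n = \varphi_n(k_n)$ with $\varphi_n \in \Aut_{\hol}(\Omega)$ and $k_n \in K$; passing to a subsequence, $k_n \to k \in K$. Since each $\varphi_n$ is a Kobayashi isometry,
\[
d_\Omega(\varphi_n(k), \varphi_n(k_n)) = d_\Omega(k, k_n) \to 0.
\]
On a bounded domain in $\Cb^d$ the Kobayashi pseudodistance dominates a constant multiple of the Euclidean distance (apply the Carath\'eodory definition to the rescaled coordinate functions $z_i/(2R)$). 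Therefore $\abs{\varphi_n(k) - \varphi_n(k_n)} \to 0$, and combined with $\varphi_n(k_n) \to y$ this yields $\varphi_n(k) \to y$, so $y \in \Lc(\Omega)$.

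Combining the two steps, $x_0 \in \Lc(\Omega)$ and $\partial \Omega$ is strongly pseudo-convex near $x_0$, so the strengthened Rosay--Wong theorem applies and $\Omega$ is bi-holomorphic to the unit ball. The only step that is not immediate is the Kobayashi-versus-Euclidean comparison needed to pass between the two sequences $\varphi_n(k_n)$ and $\varphi_n(k)$; this is the main technical obstacle, but it is resolved by the standard estimate above using bounded holomorphic coordinate functions.
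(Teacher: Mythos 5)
Your proposal is correct and follows exactly the route the paper intends: the paper derives this corollary directly from the strengthened Rosay--Wong theorem, leaving implicit the two standard facts you supply (a circumscribed-sphere point of strong convexity, and $\Lc(\Omega)=\partial\Omega$ under a co-compact action, which is the Kobayashi-metric analogue of the paper's Corollary on co-compactness via the invariant metric $C_\Omega$). Both of your supporting arguments are sound, so nothing further is needed.
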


We should also observe the the direct analogue of Theorem~\ref{thm:C2} fails in the holomorphic setting in particular:

\begin{example}\label{ex:ellipse}
Let $\Omega_0 = \{ (z_1, z_2) \in \Cb^2 : \operatorname{Im}(z_1) > \abs{z_2}^4 \}$. Then for $t \in \Rb$, $\Aut(\Omega_0)$ contains the bi-holomorphic map
\begin{align*}
a_t \cdot (z_1, z_2) \rightarrow (e^{4t} z_1, e^t z_2).
\end{align*}
Moreover $\Omega_0$ is bi-holomorphic to $\Omega =  \{ (z_1, z_2) \in \Cb^2 : \abs{z_1}^2+ \abs{z_2}^4 <1 \}$ via the map $F:\Omega_0 \rightarrow \Omega$
\begin{align*}
F(z_1, z_2) = \left( \frac{z_1-i}{z_1+i}, \frac{z_2}{2(z_1+i)^{1/2}} \right).
\end{align*}
Then $b_t = F \circ a_t \circ F^{-1} \in \Aut(\Omega)$ and so $(1,0), (-1,0) \in \Lc(\Omega)$.  
\end{example}

However, we recently proved the following variant of Theorem~\ref{thm:C2} in the complex setting:

\begin{theorem}\cite{Z2015}
Suppose $\Omega \subset \Cb^d$ is a bounded convex open set with $C^\infty$ boundary. If there exists $x, y \in \Lc(\Omega)$ with $T_x^{\Cb} \partial \Omega \neq T_y^{\Cb} \partial \Omega$ then $\Omega$ is bi-holomorphic to a domain of the form 
\begin{align*}
\{ (z_1, \dots,z_d) \in \Cb^d : \abs{z_1}^2 + p(z_2, \dots, z_d) < 1\}
\end{align*}
where $p$ is a polynomial. 
\end{theorem}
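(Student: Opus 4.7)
The plan is to combine the Bedford--Pinchuk scaling method with rigidity coming from having two limit points with distinct complex tangent hyperplanes. The overall strategy is: at each limit point, rescale $\Omega$ via anisotropic dilations to a polynomial model; then exploit the two different tangent hyperplanes to show that the model polynomial decouples the first coordinate from the rest, so that after a Cayley transform one gets the desired form.

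First I would show that if $x \in \Lc(\Omega)$ is a smooth boundary point, then $\partial\Omega$ is of finite D'Angelo type at $x$. For a smoothly bounded convex domain, infinite type at $x$ would force a non-trivial complex affine segment in $\partial \Omega$ through $x$; using completeness of the Kobayashi pseudometric on bounded convex domains and standard estimates, such a segment would obstruct the existence of an orbit $\varphi_n p \to x$ by producing degenerate Kobayashi balls near $x$. Thus at $x$ and $y$, the boundary is of finite type, and by the McNeal--Boas--Straube theory of convex finite type points, one has well-defined multitype data. I would then perform the Bedford--Pinchuk scaling: choose affine coordinates so that $x=0$ and $T_x^{\Cb}\partial\Omega = \{w_1=0\}$, let $\delta_n = \operatorname{dist}(\varphi_n(p), \partial\Omega)$, and define anisotropic dilations $\Lambda_n$ that stretch by $\delta_n^{-1}$ in the normal direction and by $\delta_n^{-1/m_j}$ in tangential directions according to the multitype exponents $m_j$. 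The rescaled domains $\Lambda_n(\Omega)$ converge in local Hausdorff topology to a model $M_x = \{\operatorname{Im}(w_1) > P(w_2,\dots,w_d)\}$ with $P$ a convex, weighted-homogeneous polynomial. By the normal-families properties of bounded convex domains (Kobayashi-metric equicontinuity), the maps $\Lambda_n \circ \varphi_n$ subconverge to a biholomorphism $F_x : \Omega \to M_x$.

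Performing the same construction at $y$ produces a biholomorphism $F_y : \Omega \to M_y = \{\operatorname{Im}(w_1) > Q(w_2,\dots,w_d)\}$, and $\Phi := F_y \circ F_x^{-1}$ is a biholomorphism between polynomial models. The scaling sequence at $x$ produces in $\Aut_{\hol}(M_x)$ a one-parameter group $\tau_s : w_1 \mapsto w_1 + s$ of translations (the scaled limit of the action of $\varphi_n$ near $x$), together with the weighted dilations $(w_1, w') \mapsto (t^{2m} w_1, t^{m/m_j} w'_j)$. The hypothesis $T_x^{\Cb}\partial\Omega \neq T_y^{\Cb}\partial\Omega$ means that $F_x(y)$ is a point of $\overline{M_x}$ distinct from the "point at infinity along $w_1$" fixed by $\tau_s$; that is, $\Aut_{\hol}(M_x)$ has a second parabolic fixed point. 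A classification argument for automorphism groups of convex polynomial tube-type domains (in the spirit of Kim--Krantz and Gaussier) then forces $M_x$ to be circular in the $w_1$ variable: the polynomial $P$ must depend only on the tangential variables and the bounded realization must be $\{|z_1|^2 + p(z_2,\dots,z_d) < 1\}$ after the inverse Cayley transform $w_1 = i(1-z_1)/(1+z_1)$ (up to an affine change in the remaining coordinates).

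The main obstacle is the last step: converting the geometric condition of two distinct complex tangent hyperplanes at limit points into the algebraic rigidity that the polynomial $P$ is independent of $w_1$. One must carefully track how the action of $\Aut_{\hol}(\Omega)$ on a neighborhood of $y$ descends under $F_x$ to a well-defined automorphism subgroup of $M_x$ with an explicit second fixed point, and then invoke a structure theorem for automorphism groups of polynomial convex domains to conclude $z_1$-independence. Establishing finite type at the limit points, and controlling the Kobayashi-metric estimates needed for normality of $\{\Lambda_n \circ \varphi_n\}$ uniformly on compact sets of $\Omega$, are the other technically demanding ingredients; both rely on the fact that convexity upgrades smooth boundary to good intrinsic metric behavior.
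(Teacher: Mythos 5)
This statement is quoted in the paper from \cite{Z2015}; no proof of it appears in the present text, so your argument can only be assessed on its own terms. There is a genuine gap at the very first step. You claim that if $\partial\Omega$ were of infinite D'Angelo type at a limit point $x$, then convexity and smoothness would force a non-trivial complex affine segment in $\partial\Omega$ through $x$. This is false: a smooth convex function can vanish to infinite order at a point without vanishing on any segment (build a defining function from $e^{-1/t^2}$), so there exist smoothly bounded convex domains of infinite type at a boundary point whose boundary contains no affine disc through that point. Since your entire scaling scheme --- the multitype exponents $m_j$, the anisotropic dilations $\Lambda_n$, and the convergence to a weighted homogeneous polynomial model --- presupposes finite type at $x$ and $y$, the argument does not get off the ground. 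Finite type at the limit points cannot be established a priori in this way; in the cited work it emerges only at the end, as a consequence of the conclusion.

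The route actually taken in \cite{Z2015} (mirrored by Theorem~\ref{thm:blow_up} of the present paper in the projective setting) sidesteps this issue: one first uses the two distinct complex tangent hyperplanes at points of $\Lc(\Omega)$ to produce an automorphism with ``bi-proximal'' dynamics and, from it, a one-parameter subgroup of $\Aut_{\hol}(\Omega)$ acting as anisotropic dilations in suitable affine coordinates. Invariance of $\Omega$ under this subgroup forces the graphing function of $\partial\Omega$ near the attracting fixed point to satisfy a weighted homogeneity identity, and weighted homogeneity combined with $C^\infty$ smoothness at the origin forces that function to be a polynomial; finite type then follows a posteriori. Your final step also needs repair: the passage from ``two fixed points'' to the circular realization $\{\abs{z_1}^2 + p < 1\}$ rests on an unspecified classification argument, whereas what is actually used is an explicit generalized Cayley transform adapted to the weights, exactly as in Example~\ref{ex:ellipse}. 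If you replace the finite-type step by the construction of the dilation subgroup, the remainder of your outline can be salvaged, but as written the proof fails at its foundation.
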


\begin{remark} In~\cite{Z2015}, we actually show that $p$ is a \emph{weighted homogeneous polynomial}. \end{remark}

Finally we should mention a remarkable theorem due to Frankel:

\begin{theorem}\cite{F1989}
Suppose $\Omega \subset \Cb^d$ is a bounded convex open set and there exists a discrete group $\Gamma \leq \Aut_{\hol}(\Omega)$ which acts properly discontinuously, freely, and co-compactly on $\Omega$. Then $\Omega$ is a bounded symmetric domain. 
\end{theorem}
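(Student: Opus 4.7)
The plan is to follow the scaling (or rescaling) method. Since $\Omega$ is bounded and convex, the Kobayashi pseudo-distance $k_\Omega$ is a complete metric which is invariant under $\Aut_{\hol}(\Omega)$, and the action of $\Aut_{\hol}(\Omega)$ on $\Omega$ is proper (by H.~Cartan's theorem applied to the bounded setting). The hypothesis gives a compact fundamental domain $K \subset \Omega$ for $\Gamma$. The goal is to upgrade this co-compactness into transitivity of the full automorphism group $\Aut_{\hol}(\Omega)$ on $\Omega$, and then invoke the classification of bounded homogeneous convex domains.

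First I would set up the scaling construction. Fix $p_0 \in \Omega$, pick any sequence $x_n \in \Omega$ leaving every compact set, and use co-compactness to choose $\gamma_n \in \Gamma$ with $\gamma_n^{-1}x_n \in K$. After extracting, $\gamma_n^{-1}x_n \to q \in K$, and the points $x_n$ approach some $\xi \in \partial\Omega$. Choose affine transformations $A_n \in \Aff(\Cb^d)$ that normalize $\Omega$ near $\xi$ (for instance, translate $x_n$ to $0$ and rescale so that the inscribed Euclidean ball at the distance-to-boundary scale has unit size). Convexity ensures that the rescaled domains $A_n(\Omega)$ are convex and Hausdorff-subconverge to a bounded convex domain $\wh\Omega_\infty$, and the maps $F_n = A_n \circ \gamma_n^{-1}: \Omega \to A_n(\Omega)$ form a normal family by Montel, with subsequential limit a holomorphic map $F: \Omega \to \wh\Omega_\infty$. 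The main technical step, which I expect to be the principal obstacle, is to prove $F$ is a \emph{biholomorphism}: one must rule out degeneration of the differential, and this is where one exploits the completeness and invariance of $k_\Omega$, together with the Kobayashi-isometry property and the convexity-based lower bound $k_\Omega(z,w) \geq \tfrac{1}{2}\log(1 + \text{Euclidean ratio})$.

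Next, varying $x_n$ over arbitrary directions of escape to the boundary and running the scaling construction produces, via the inverse biholomorphisms $F_n^{-1}$ composed in pairs, a large family of holomorphic self-maps of $\Omega$. The core of Frankel's argument is that these limits generate vector fields on $\Omega$ which at $p_0$ span the full real tangent space $T_{p_0}\Omega \cong \Rb^{2d}$: heuristically, each boundary direction $\xi$ contributes an independent infinitesimal motion at $p_0$, and the richness of $\partial\Omega$ (as seen through the co-compact $\Gamma$-action) forces the span to be full. Integrating these fields, or equivalently passing from the semigroup of limits to the group they generate, shows that the $\Aut_{\hol}(\Omega)$-orbit of $p_0$ is open. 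Because the orbit is also closed (by properness and the fact that $\Aut_{\hol}(\Omega)$ is a Lie group acting smoothly on the connected $\Omega$), it equals $\Omega$, so $\Omega$ is homogeneous.

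Finally, I would invoke the known classification: every bounded convex homogeneous domain in $\Cb^d$ is biholomorphic to a bounded symmetric domain. This can be proven by building, from the homogeneous convex cone associated to $\Omega$ via its realization as a Siegel domain, a J-algebra (or T-algebra) structure in the sense of Vinberg, whose self-adjointness is forced by the convexity assumption, and then applying the Koecher--Vinberg--Piatetski-Shapiro correspondence to produce the symmetry at a base point; alternatively, one can use that the Bergman metric on a bounded homogeneous domain is Kähler--Einstein and construct the geodesic symmetry directly from the convexity. Either route completes the proof that $\Omega$ is a bounded symmetric domain.
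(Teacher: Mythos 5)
First, a caveat: the paper offers no proof of this statement --- it is quoted verbatim from Frankel \cite{F1989} in the survey section --- so there is no internal argument to compare yours against; the comparison below is with Frankel's actual proof. Your opening move is indeed the right one and is Frankel's: rescale along the $\Gamma$-orbit. (Frankel's normalization is $T_n(z)=(d\gamma_n(q))^{-1}(\gamma_n(z)-\gamma_n(q))$, built from the derivative of $\gamma_n$ itself rather than from inscribed balls; this is what makes the nondegeneracy of the limit come out of the Kobayashi-isometry property plus convexity with little extra work.) But the two steps that carry all the weight are missing. The passage from ``a family of subsequential limit maps'' to ``the $\Aut_{\hol}(\Omega)$-orbit of $p_0$ is open'' is the entire content of Frankel's paper: one must actually produce one-parameter (semi)groups of automorphisms from these limits and show their infinitesimal generators span, and the assertion that ``each boundary direction contributes an independent infinitesimal motion'' is precisely what has to be proved, not a heuristic one may wave at. Nothing in your outline explains why the limits $F^{-1}\circ F'$ are not all tangent to a proper foliation, which is the degenerate scenario Frankel's detailed analysis (semicontinuity of the limit domains, the affine structure on the space of limits) is designed to exclude.

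The final step is the more serious problem: ``every bounded convex homogeneous domain in $\Cb^d$ is symmetric'' is not a classical theorem you can invoke, and your sketch of why convexity should force self-adjointness of the Vinberg algebra has no substance --- non-symmetric homogeneous Siegel domains of type II are themselves convex (though unbounded), so convexity of a realization does not visibly interact with self-duality of the cone, and whether a non-symmetric bounded homogeneous domain can admit a convex bounded realization is exactly the sort of question Frankel's theorem addresses rather than presupposes. Frankel closes the argument by using the lattice a second time, not convexity: once $\Omega$ is homogeneous, $\Gamma$ is a cocompact lattice in the Lie group $\Aut_{\hol}(\Omega)$ (the isotropy being compact by H.~Cartan), hence that group is unimodular, and Hano's theorem (a bounded homogeneous domain whose transitive automorphism group is unimodular is symmetric; equivalently, the Koszul form vanishes) yields symmetry. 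So the skeleton of your proposal points in the right direction, but both the homogeneity step and the homogeneous-to-symmetric step are gaps, and the second is repaired by an argument of a genuinely different nature than the one you propose.
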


\subsection*{Acknowledgments} 

This material is based upon work supported by the National Science Foundation under Grant Number NSF 1400919.

\section{Preliminaries}

\subsection{Notations} Given some object $o$ we will let $[o]$ be the projective equivalence class of $o$, for instance: if $v \in \Kb^{d+1} \setminus \{0\}$ let $[v]$ denote the image of $v$ in $\Pb(\Kb^{d+1})$ and if $\phi \in \GL_{d+1}(\Kb)$ let $[\phi]$ denote the image of $\phi$ in $\PGL_{d+1}(\Kb)$.

For $v,w \in \Kb^{d+1}$ we define the standard inner product 
\begin{align*}
\ip{v,w} = ^t\overline{v}w
\end{align*}
where $ ^t\overline{v}$ is the conjugate transpose of $v$. We let $\norm{v}=\ip{v,v}$ be the norm induced by this inner product and for $T \in \End(\Kb^{d+1})$ let $\norm{T}$ be the associated operator norm. If $\Kb^{(d+1)*}$ is the $\Kb$-module of $\Kb$-linear functions $f: \Kb^{d+1} \rightarrow \Kb$  then we define 
\begin{align*}
\norm{f} = \sup\{ \abs{f(z)} : z \in \Kb^{d+1}, \norm{z}=1\}.
\end{align*}

\subsection{Quarternions} We present a short introduction to the Quaternions in the appendix (Section~\ref{sec:quaternions}).

\section{An intrinsic metric and applications}

Let $\Kb^{(d+1)*}$ denote the $\Kb$-module of $\Kb$-linear functions $f: \Kb^{d+1} \rightarrow \Kb$, that is $f(vz) = f(v)z$ for all $v \in \Kb^{d+1}$ and $z \in \Kb$. Then let $\Pb(\Kb^{(d+1)*})$ be the projective space of lines in $\Kb^{(d+1)*}$ (parametrized on the right). Then the \emph{dual set} of  $\Omega \subset \Pb(\Kb^{d+1})$ is the set
\begin{align*}
\Omega^* = \{ f \in \Pb(\Kb^{(d+1)*}) : \ker f \cap \Omega = \emptyset \}.
\end{align*}
Given $\varphi \in \PGL_{d+1}(\Kb)$ let $^*\varphi \in \PGL(\Kb^{(d+1)*})$ be the transformation $^*\varphi(f) = f \circ \varphi$.  We begin by making some observations:

\begin{observation} \
\begin{enumerate}
\item If $\Omega$ is open then $\Omega^*$ is compact. 
\item If $\Omega$ is bounded in an affine chart then $\Omega^*$ has non-empty interior. 
\item If $\varphi \in \Aut(\Omega)$ then $^*\varphi \in \Aut(\Omega^*)$. 
\end{enumerate}
\end{observation}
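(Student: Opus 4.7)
The three items are essentially independent elementary topological facts about the incidence between points and hyperplanes in $\Pb(\Kb^{d+1})$. The unifying idea is that the zero locus $\ker f \subset \Pb(\Kb^{d+1})$ depends continuously on $[f] \in \Pb(\Kb^{(d+1)*})$, and I would handle each item in turn.

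For (1), the plan is to show the complement of $\Omega^*$ in $\Pb(\Kb^{(d+1)*})$ is open, which will make $\Omega^*$ a closed, hence compact, subset of the compact space $\Pb(\Kb^{(d+1)*})$. Suppose $[f_0]$ lies in the complement, witnessed by some $[v_0] \in \Omega \cap \ker f_0$. I would fix $w \in \Kb^{d+1}$ with $f_0(w) \neq 0$ and consider, for $[f]$ with a representative $f$ near $f_0$ in $\Kb^{(d+1)*}$, the vector $v_0 - w f(w)^{-1} f(v_0) \in \ker f$; since $f(v_0) \to 0$, this vector tends to $v_0$. Openness of $\Omega$ places this vector in $\Omega \cap \ker f$ for all $[f]$ sufficiently close to $[f_0]$, yielding the desired openness.

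For (2), the assumption provides an affine chart $U \subset \Pb(\Kb^{d+1})$ in which $\Omega$ is bounded. Then $\overline{\Omega}$ is compact in $U$, hence equals its closure in $\Pb(\Kb^{d+1})$, and the hyperplane $H := \Pb(\Kb^{d+1}) \setminus U$ is disjoint from $\overline{\Omega}$. Choosing $[f_0]$ with $\ker f_0 = H$ puts $[f_0] \in \Omega^*$, and the same continuity of $[f] \mapsto \ker f$ combined with the compactness of $\overline{\Omega}$ gives a whole neighborhood of $[f_0]$ on which $\ker f$ remains disjoint from $\overline{\Omega}$, and therefore from $\Omega$.

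Part (3) is a direct verification: since $\ker(f \circ \varphi) = \varphi^{-1}(\ker f)$ and $\varphi \Omega = \Omega$, one has $(^*\varphi)(f) \in \Omega^*$ iff $\ker f \cap \varphi(\Omega) = \ker f \cap \Omega = \emptyset$ iff $f \in \Omega^*$. The only step requiring genuine care is the perturbation in (1), where one must move $[v_0]$ into $\ker f$ along a direction that keeps it inside the open set $\Omega$; writing down the explicit displacement by $w f(w)^{-1} f(v_0)$ (which respects the right $\Kb$-module structure even when $\Kb = \Hb$) makes this transparent.
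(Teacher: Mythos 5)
Your proof is correct. The paper states this as an Observation and supplies no proof at all, so there is no argument to compare against; your write-up correctly fills in the standard details. The one step that genuinely needs care --- perturbing $[v_0]$ into $\ker f$ while staying in the open set $\Omega$ in part (1) --- is handled properly by the explicit displacement $w\, f(w)^{-1} f(v_0)$, which is indeed compatible with the right $\Kb$-module structure (so that $f\bigl(w\, f(w)^{-1} f(v_0)\bigr) = f(v_0)$ even when $\Kb = \Hb$), and the compactness argument in (2) and the verification in (3) are both sound.
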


Now using the dual we can define a metric which generalizes the classical Hilbert metric in real projective geometry. For an open set $\Omega \subset \Pb(\Kb^{d+1})$ define the function $C_{\Omega}: \Omega \times \Omega \rightarrow \Rb$ by
\begin{align*}
C_{\Omega}(p,q) = \sup_{f,g \in \Omega^*} \log \abs{\frac{f(p)g(q)}{f(q)g(p)}}.
\end{align*}
Since $(\varphi \Omega)^* =  ^*\varphi \Omega^*$, we see that 
\begin{align*}
C_{\Omega}(p,q) = C_{\varphi\Omega}(\varphi p, \varphi q)
\end{align*}
for all $\varphi \in \PGL_{d+1}(\Kb)$ and $p,q \in \Omega$. Thus $C_{\Omega}$ will be $\Aut(\Omega)$-invariant. 

When $\Kb = \Rb$ and $\Omega \subset \Pb(\Rb^{d+1})$ is a convex subset, this function $C_{\Omega}$ coincides with the classical Hilbert metric (see for instance~\cite{K1977}). For $\Kb = \Cb$ this function was introduced by Dubois~\cite{D2009} for linearly convex sets in $\Pb(\Cb^{d+1})$. For such domains, Dubois proved that $C_{\Omega}$ is a complete metric. Additional properties of the metric $C_{\Omega}$ for linearly convex sets were established in~\cite{Z2013}. Finally we recently constructed an analogue of the metric $C_\Omega$ for certain domains in real flag manifolds~\cite{Z2015b}. 

Next we will show that $C_{\Omega}$ is a metric generating the standard topology whenever the domain is proper. However,  without convexity assumptions $C_\Omega$ may not be a complete metric.

\begin{proposition}\label{prop:metric}
Suppose $\Omega \subset \Pb(\Kb^{d+1})$ is a proper domain. Then $C_{\Omega}$ is an $\Aut(\Omega)$-invariant metric on $\Omega$ which generates the standard topology. 
\end{proposition}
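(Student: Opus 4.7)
The plan is to verify the three metric axioms for $C_\Omega$ and then to compare the $C_\Omega$-topology with the standard one via a local Lipschitz bound in one direction and a boundary-approach argument in the other. The two facts from the Observation above drive the whole argument: $\Omega^*$ is compact and has non-empty interior.

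\emph{Finiteness and formal axioms.} For any compact $K \subset \Omega$, the projectively well-defined function $(f, v) \mapsto \abs{f(v)}/(\norm{f}\norm{v})$ is continuous and strictly positive on the compact set $\Omega^* \times K$, hence bounded away from $0$. Consequently the integrand $(f,g) \mapsto \log\abs{f(p)g(q)/(f(q)g(p))}$ is a continuous real-valued function on the compact set $\Omega^* \times \Omega^*$ for each fixed pair $p, q \in \Omega$, so $C_\Omega(p,q)$ is finite and attained. Taking $f = g$ gives $C_\Omega(p,p) = 0$; swapping $(f, g)$ replaces the integrand by its negative, proving $C_\Omega(p,q) = C_\Omega(q,p) \geq 0$; the triangle inequality follows from the factorization
\[
\frac{f(p)g(r)}{f(r)g(p)} = \frac{f(p)g(q)}{f(q)g(p)} \cdot \frac{f(q)g(r)}{f(r)g(q)}
\]
combined with $\sup(A+B) \leq \sup A + \sup B$.

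\emph{Positivity.} If $C_\Omega(p,q) = 0$, then by the symmetry trick $\abs{f(\tilde p)g(\tilde q)} = \abs{f(\tilde q)g(\tilde p)}$ for every $f, g \in \Omega^*$ and any fixed lifts $\tilde p, \tilde q$. Fixing $g$ and letting $f$ range over a non-empty open subset of $\Omega^*$, lifted to an open cone in $\Kb^{(d+1)*}$, yields $\abs{f(\tilde p)}^2 = c^2 \abs{f(\tilde q)}^2$ for all such $f$ and a fixed $c > 0$. A straightforward differentiation in $f$ (equivalently, matching the resulting quadratic identity in the real coordinates of $f$) forces $\tilde p$ to be a $\Kb$-multiple of $\tilde q$, so $p = q$ in $\Pb(\Kb^{d+1})$.

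\emph{Topology.} For $p, q$ in a compact $K \subset \Omega$, normalizing representatives and using $\abs{f(p) - f(q)} \leq \norm{f}\norm{p-q}$ together with the uniform lower bound on $\abs{f(q)}$ produces a Lipschitz estimate $C_\Omega(p,q) \leq C_K \norm{p-q}$. This shows that $C_\Omega$ is continuous on $\Omega \times \Omega$ and that every standard neighborhood of $p$ contains a $C_\Omega$-ball. For the converse, suppose $q_n \in \Omega$ does not converge to $p$ in the standard topology; after a subsequence, $q_n \to x \in \overline{\Omega}$ with $x \neq p$. If $x \in \Omega$, continuity of $C_\Omega$ and the positivity step give $C_\Omega(p, q_n) \to C_\Omega(p, x) > 0$. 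If $x \in \partial \Omega$ and some $f \in \Omega^*$ satisfies $f(x) = 0$, then $\abs{f(q_n)} \to 0$ while picking $g$ in the interior of $\Omega^*$ keeps the remaining factors bounded, so $C_\Omega(p, q_n) \to \infty$. Finally, if $x \in \partial \Omega$ but $f(x) \neq 0$ for every $f \in \Omega^*$, then $C_\Omega(p, q_n) \to 0$ would pass to the limit to give $\abs{f(p)g(x)} = \abs{f(x)g(p)}$ for all $f, g \in \Omega^*$, and the positivity argument would then force $p = x$, contradicting $x \in \partial \Omega$.

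The main obstacle is the last case above: boundary points of a non-convex proper domain need not be cut out by any hyperplane in $\Omega^*$, so positivity at the boundary does not follow from a separating-hyperplane argument. The resolution reuses the same differentiation step as the positivity proof, and both rely essentially on $\Omega^*$ having non-empty interior (rather than being merely non-empty).
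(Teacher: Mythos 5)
Your proof is correct, but it takes a genuinely different route from the paper's. The paper's argument is a comparison with the ball: after rescaling the affine chart so that $\Omega \subset \Bc$, one has $\Bc^* \subset \Omega^*$ and hence $C_{\Bc} \leq C_{\Omega}$ pointwise, and Lemma~\ref{lem:symmetric} identifies $C_{\Bc}$ with the symmetric metric on $\Hb_\Kb^d$; positivity and the hard topological direction (standard neighborhoods contain $C_\Omega$-balls) are then inherited from the ball in one stroke. You instead work directly from the defining formula: positivity via the non-empty interior of $\Omega^*$ together with rigidity of real quadratic forms, and the topology via a local Lipschitz bound plus a case analysis on boundary limits, where the genuinely delicate case (a boundary limit $x$ with $f(x) \neq 0$ for every $f \in \Omega^*$, which can occur for non-convex $\Omega$) is killed by the same rigidity argument. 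Your approach is more self-contained --- it does not need the explicit computation identifying $C_{\Bc}$ with the hyperbolic metric --- but it loses the quantitative lower bound $C_{\Bc} \leq C_{\Omega}$, which the paper gets for free and reuses later (e.g.\ to show $C_\Omega(p_n,q_n) \to 0$ forces $q_n \to x$, and in the properness argument). Two small repairs: (i) the sentence attributing ``every standard neighborhood of $p$ contains a $C_\Omega$-ball'' to the Lipschitz estimate has the implication backwards --- that estimate shows every $C_\Omega$-ball contains a standard neighborhood, and it is your subsequent boundary-approach paragraph that supplies the other inclusion, so the two directions are covered but mislabeled; (ii) the ``straightforward differentiation'' in the positivity step is better phrased as: the two real quadratic forms $f \mapsto \abs{f(\tilde p)}^2$ and $f \mapsto c^2\abs{f(\tilde q)}^2$ agree on an open cone, hence everywhere, so every $f$ annihilating $\tilde q$ annihilates $\tilde p$, forcing $\tilde p \in \tilde q\Kb$.
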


It will be helpful to observe that $C_\Omega$ on the unit ball is actually the symmetric metric:

\begin{lemma}\label{lem:symmetric}
If 
\begin{align*}
\Bc = \left\{ [1:z_1:\dots :z_d] \in \Pb(\Kb^{d+1}) : \sum_{i=1}^d \abs{z_i}^2 < 1\right\}
\end{align*}
then $(\Bc, C_{\Bc})$ coincides with the model of  $\Hb_{\Kb}^d$ described in Chapter 19 of~\cite{M1973}. In particular, $C_{\Bc}$ is a complete metric on $\Bc$ which generates the standard topology.
\end{lemma}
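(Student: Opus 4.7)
The plan is to identify $\Bc^*$ explicitly using the indefinite Hermitian form that cuts out $\Bc$, use $\Aut(\Bc)$-invariance and transitivity to reduce to a one-parameter computation, and carry out the resulting supremum by hand. Let $\Phi(v, w) = \overline{v_0} w_0 - \sum_{i=1}^{d} \overline{v_i} w_i$, a Hermitian form of signature $(1, d)$, so that $\Bc = \{ [v] : \Phi(v, v) > 0 \}$, and consider the polarity $\iota : [v] \mapsto [\Phi(v, \cdot)]$. The first step is to show
\begin{align*}
\Bc^* = \iota(\overline{\Bc}) = \{ [\Phi(v, \cdot)] : \Phi(v, v) \geq 0 \}.
\end{align*}
This is a signature argument: for nonzero $v$, the restriction $\Phi|_{v^{\perp_\Phi}}$ has signature $(1, d-1)$ when $\Phi(v,v) < 0$ and is negative semidefinite when $\Phi(v,v) \geq 0$, so $v^{\perp_\Phi}$ meets $\Bc$ precisely when $\Phi(v, v) < 0$.

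Because the norm on $\Kb$ is multiplicative, the supremum defining $C_\Bc$ factors as a product of two independent one-variable suprema. Since the image of $\U_\Kb(1, d)$ in $\PGL_{d+1}(\Kb)$ lies in $\Aut(\Bc)$, acts transitively on $\Bc$, and the stabilizer at $0 := [1 : 0 : \dots : 0]$ acts (via $\U_\Kb(d)$) transitively on unit tangent vectors at $0$, the $\Aut(\Bc)$-invariance of $C_\Bc$ reduces the calculation to $C_\Bc(0, q_t)$ for $q_t := [1 : t : 0 : \dots : 0]$ with $t \in [0, 1)$. Picking lifts $v_p = e_0$ and $v_q = e_0 + t e_1$ and applying a further $\U_\Kb(d-1)$ rotation on the last $d-1$ coordinates, the optimization in $u \in \overline{\Bc}$ reduces to $u = (a, b, 0, \dots, 0)$ with $|a| \geq |b|$ (and similarly $w = (a',b',0,\dots,0)$). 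Using $t \in \Rb$,
\begin{align*}
\frac{|\Phi(u, v_p)|}{|\Phi(u, v_q)|} = \frac{|a|}{|a - bt|}, \qquad \frac{|\Phi(w, v_q)|}{|\Phi(w, v_p)|} = \frac{|a' - b't|}{|a'|}.
\end{align*}
After normalizing $a = a' = 1$, so $|b|, |b'| \leq 1$, the triangle inequality in $\Kb$ gives $\inf_{|b| \leq 1} |1 - b t| = 1 - t$ (attained at $b = 1$) and $\sup_{|b'| \leq 1} |1 - b' t| = 1 + t$ (attained at $b' = -1$), with equality forcing $b, b' \in \Rb$. Hence
\begin{align*}
C_\Bc(0, q_t) = \log \frac{1+t}{1-t},
\end{align*}
which matches (via the cross-ratio with the boundary points $[1 : \pm 1 : 0 : \dots : 0]$ of the real geodesic through $0$ and $q_t$) the standard distance in the Cayley-Klein/Mostow ball model of $\Hb_\Kb^d$. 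Completeness and the assertion about the topology then follow from the standard properties of $\Hb_\Kb^d$.

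The main obstacle I anticipate is setting up the polarity $\iota$ cleanly over the quaternions: since $\Phi(v \lambda, \cdot) = \overline{\lambda} \, \Phi(v, \cdot)$ is left- rather than right-scalar multiplication, one must handle the right-module convention on $\Kb^{(d+1)*}$ carefully to confirm that $\iota$ descends to a well-defined bijection of projective classes. Once that bookkeeping is in place the remainder of the argument is just the elementary triangle-inequality computation above.
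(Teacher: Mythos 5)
Your proof is correct and follows essentially the same route as the paper: reduce by the transitive isometric action of $\U_{\Kb}(1,d)$ to the pair $(x_0,x_t)$, identify $\Bc^*$ as the closed unit ball in the dual, and compute the supremum to get $\log\frac{1+t}{1-t}$. The only difference is that you justify the identification of $\Bc^*$ (via the signature argument for the polarity of $\Phi$) and the extremality of $b=1$, $b'=-1$ explicitly, where the paper simply asserts both.
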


\begin{proof}
Let $d_{\Kb}$ be the distance on $\Bc$ described in Chapter 19 of~\cite{M1973} and for $t \in (-1,1)$ let 
\begin{align*}
x_t = [1: t:0: \dots :0].
\end{align*}
Now for any $p, q\in \Omega$ there exists $\varphi\in \SU_{\Kb}(1,d)$ so that $\varphi p = x_0$ and $\varphi_q = x_t$ for some $t \in (0,1)$. Then since $\SU_{\Kb}(1,d)$ acts by isometries on $(\Bc, C_{\Bc})$ and $(\Bc, d_{\Kb})$ it is enough to show 
\begin{align*}
C_{\Bc}(x_0, x_t) = d_{\Kb}(x_0, x_t).
\end{align*}
Moreover when $t \in (0,1)$
\begin{align*}
d_{\Kb}(x_0, x_t) = \log \frac{1+t}{1-t}.
\end{align*}
Using the standard inner product we can identify $\Kb^{(d+1)*}$ with $\Kb^{d+1}$ and then view $\Bc^*$ as a subset of $\Pb(\Kb^{d+1})$. Then 
\begin{align*}
\Bc^* = \left\{ [1:f_1: \dots : f_d] : \sum_{i=1}^d \abs{f_i}^2 < 1\right\}.
\end{align*}
Then if $t \in (0,1)$
\begin{align*}
C_{\Omega}(x_0, x_t) = \sup_{f,g \in \Bc^*} \log \frac{1-tf_1}{1-tg_1}
\end{align*}
and this is clearly maximized when $f=[1:-1:0:\dots:0]$ and $g=[1:1:0:\dots:0]$. So 
\begin{align*}
C_{\Omega}(x_0, x_t) =  \log \frac{1+t}{1-t}
\end{align*}
and thus $C_\Omega = d_{\Kb}$. 
\end{proof}

\begin{proof}[Proof of Proposition~\ref{prop:metric}]
Since $^*\varphi \Omega^*=\Omega^*$ for all $\varphi \in \Aut(\Omega)$, it is clear that $C_{\Omega}$ is $\Aut(\Omega)$-invariant.

Suppose that $p,q,r \in \Omega$. Since $\Omega^*$ is compact there exists $f,g \in \Omega^*$ 
\begin{align*}
C_{\Omega}(p,q) =  \log \abs{\frac{f(p)g(q)}{f(q)g(p)}}.
\end{align*}
Then for $r \in \Omega$ 
\begin{align*}
C_{\Omega}(p,q) =  \log \abs{\frac{f(p)g(q)}{f(q)g(p)}} \leq \log \abs{\frac{f(p)g(r)}{f(r)g(p)}}+\log \abs{\frac{f(r)g(q)}{f(q)g(r)}} \leq C_{\Omega}(p,r)+C_{\Omega}(r,q).
\end{align*}
So $C_{\Omega}$ satisfies the triangle inequality.

Now fix an affine chart $\Kb^d$ which contains $\Omega$ as a bounded set. Then after rescaling we may assume that 
\begin{align*}
\Omega \subset \Bc := \{ z \in \Kb^d : \norm{z} < 1\}.
\end{align*}
By the above Lemma $C_{\Bc}$ is a complete metric which generated the standard topology on $\Bc$. Moreover $\Bc^* \subset \Omega^*$ and so $C_{\Bc} \leq C_{\Omega}$ on $\Omega$. Then for $p,q \in \Omega$ distinct we have
\begin{align*}
0 < C_{\Bc}(p,q) \leq C_{\Omega}(p,q).
\end{align*}
Thus $C_{\Omega}$ is a metric. 

Since $\Omega^*$ is compact the function $C_{\Omega} : \Omega \times \Omega \rightarrow \Rb_{\geq 0}$ is continuous. Thus to show that $C_{\Omega}$ generates the standard topology it is enough to show: for any $p \in \Omega$ and $U \subset \Omega$ an open neighborhood of $p$ there exists $\epsilon > 0$ so that 
\begin{align*}
\{ q \in \Omega : C_{\Omega}(p,q) < \epsilon \} \subset U.
\end{align*}
But since $C_{\Bc}$ generates the standard topology on $\Bc$, there exists $\epsilon > 0$ so that 
\begin{align*}
\{ q \in \Bc : C_{\Bc}(p,q) < \epsilon \} \subset U.
\end{align*}
But then 
\begin{align*}
\{ q \in \Omega : C_{\Omega}(p,q) < \epsilon \} \subset \{ q \in B_R : C_{\Bc}(p,q) < \epsilon \} \subset U
\end{align*} 
since $C_{\Bc} \leq C_{\Omega}$ on $\Omega$. So $C_{\Omega}$ generates the standard topology. 
\end{proof}

\subsection{The automorphism group}

\begin{proposition}\label{prop:closed}
Suppose $\Omega \subset \Pb(\Kb^{d+1})$ is a proper domain. Then $\Aut(\Omega) \leq \PGL_{d+1}(\Kb)$ is a closed subgroup and acts properly on $\Omega$.
\end{proposition}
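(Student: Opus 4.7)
The plan is to leverage the $\Aut(\Omega)$-invariant metric $C_\Omega$ from Proposition~\ref{prop:metric}, together with the comparison $C_\Omega \geq C_\Bc$ that comes from $\Omega$ sitting inside a rescaled model ball $\Bc$ in some affine chart.

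For closedness, suppose $\varphi_n \in \Aut(\Omega)$ converges to some $\varphi_\infty$ in $\PGL_{d+1}(\Kb)$. Since $\PGL_{d+1}(\Kb)$ acts by diffeomorphisms of the compact manifold $\Pb(\Kb^{d+1})$, the convergence is uniform, and Hausdorff limits of the identities $\varphi_n\overline\Omega = \overline\Omega$ and $\varphi_n\partial\Omega = \partial\Omega$ yield $\varphi_\infty\overline\Omega = \overline\Omega$ and $\varphi_\infty\partial\Omega = \partial\Omega$. Since $\Omega = \overline\Omega \setminus \partial\Omega$, this forces $\varphi_\infty\Omega = \Omega$, so $\varphi_\infty \in \Aut(\Omega)$.

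For properness, it suffices to prove that whenever $\varphi_n \in \Aut(\Omega)$ and $\varphi_n p \to q$ for some $p, q \in \Omega$, a subsequence of $\varphi_n$ converges in $\PGL_{d+1}(\Kb)$. Lift $\varphi_n$ to $\tilde\varphi_n \in \GL_{d+1}(\Kb)$ with $\|\tilde\varphi_n\| = 1$ and extract a subsequence with $\tilde\varphi_n \to \tilde\varphi$; if $\tilde\varphi$ is invertible then $\varphi_n = [\tilde\varphi_n] \to [\tilde\varphi]$ in $\PGL_{d+1}(\Kb)$, so the main obstacle is ruling out the singular case. Before treating it, the explicit form of $C_\Bc$ from Lemma~\ref{lem:symmetric} gives a bound $C_\Bc(x, y) \geq c\,\|x - y\|$ (for some $c > 0$) valid inside the ball, so $C_\Omega \geq c\,\|\cdot\|$ on $\Omega$. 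Consequently the $C_\Omega$-isometries $\{\varphi_n\}$ are uniformly equicontinuous on compact subsets of $\Omega$, and a diagonal Arzela--Ascoli argument produces, after passing to a further subsequence, a continuous map $f : \Omega \to \overline\Omega$ with $\varphi_n \to f$ uniformly on compact subsets of $\Omega$. In particular $f(p) = q \in \Omega$.

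To rule out $\tilde\varphi$ being singular, suppose it has nontrivial kernel $K$ and image $V \subsetneq \Kb^{d+1}$. Norm convergence of the lifts forces $f(x) = [\tilde\varphi x]$ for every $x \in \Omega \setminus [K]$, so $f$ maps the dense open set $\Omega \setminus [K]$ into the proper projective subspace $[V]$. Because $f$ is continuous at $p$ with $f(p) = q \in \Omega$, there is an open neighborhood $U$ of $p$ with $f(U) \subset \Omega$, and the set $\Omega_0 := U \setminus [K]$ is nonempty and open since $[K]$ has empty interior. For $x, y \in \Omega_0$ the continuity of $C_\Omega$ on $\Omega \times \Omega$ together with the $\Aut(\Omega)$-invariance gives
\begin{align*}
C_\Omega\bigl(f(x), f(y)\bigr) = \lim_{n \to \infty} C_\Omega(\varphi_n x, \varphi_n y) = C_\Omega(x, y),
\end{align*}
so $f|_{\Omega_0}$ is a continuous injection from an open set of real dimension $d \dim_\Rb \Kb$ into $[V]$, a real manifold of dimension at most $(d-1) \dim_\Rb \Kb$. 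Brouwer's invariance of dimension forbids this, so $\tilde\varphi$ must be invertible.
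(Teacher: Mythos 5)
Your proof is correct and follows essentially the same strategy as the paper's: use the $\Aut(\Omega)$-invariant metric $C_\Omega$ and its comparison with the ball metric to extract a locally uniformly convergent subsequence of isometries with limit $f$ mapping a neighborhood of $p$ into $\Omega$, then rule out degeneration of the norm-one lifts because the induced map of a singular endomorphism cannot be injective on an open set. The only (cosmetic) differences are that you prove closedness via uniform convergence on the compact manifold and the decomposition $\Omega=\overline{\Omega}\setminus\partial\Omega$ rather than the paper's bi-Lipschitz estimate on the distance-to-boundary function, and you derive the final contradiction from invariance of domain rather than from the fact that a singular linear map has positive-dimensional projective fibers.
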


\begin{proof}
We first show that $\Aut(\Omega)$ is a closed subgroup of $\PGL_{d+1}(\Kb)$. Suppose that $\varphi_n \in \Aut(\Omega)$ and $\varphi_n \rightarrow \varphi$ in $\PGL_{d+1}(\Kb)$. Let $d_{\Pb}$ be a distance on $\Pb(\Kb^{d+1})$ induced by a Riemannian metric. Then since $\varphi_n \rightarrow \varphi$ there exists some $M \geq 1$ so that 
\begin{align*}
\frac{1}{M} d_{\Pb}(p,q) \leq d_{\Pb}(\varphi_n p, \varphi_n q) \leq M d_{\Pb}(p,q)
\end{align*}
for all $p,q \in \Pb(\Kb^{d+1})$ and $n \in \Nb$. Next define $\delta_{\Omega} : \Omega \rightarrow \Rb_{>0}$ to be 
\begin{align*}
\delta_{\Omega}(p) = \inf \{ d_{\Pb}(p,x) : x \in \Pb(\Kb^{d+1}) \setminus \Omega \}.
\end{align*}
Then 
\begin{align*}
\frac{1}{M} \delta_{\Omega}(p) \leq \delta_{\Omega}(\varphi p) \leq M \delta_{\Omega}(p).
\end{align*}
for $p \in \Omega$. So $\varphi(\Omega) \subset \Omega$. Since $\varphi_n^{-1} \rightarrow \varphi^{-1}$ the same argument shows that $\varphi^{-1}(\Omega) \subset \Omega$. Thus $\varphi(\Omega) = \Omega$ and $\varphi \in \Aut(\Omega)$.

We now show that $\Aut(\Omega)$ acts properly. This argument requires some work because $C_{\Omega}$ may not be complete. So suppose $K \subset \Omega$ is a compact subset and $\varphi_n k_n \in K$ for some $\varphi_n \in \Aut(\Omega)$ and $k_n \in K$. We claim that a subsequence of $\varphi_n$ converges in $\Aut(\Omega)$. By passing to a subsequence we can suppose that $k_n \rightarrow k \in K$. Now since $C_{\Omega}$ is a locally compact metric (it generates the standard topology) and $K \subset \Omega$ is compact there exists some $\delta > 0$ so that the set 
\begin{align*}
K_1 = \{ q \in \Omega: C_{\Omega}(K,q) \leq 2\delta\}
\end{align*}
is compact. Next let 
\begin{align*}
K_2 = \{ q \in \Omega : C_{\Omega}(k,q) \leq \delta\}.
\end{align*}
Then for large $n$ we have $\varphi_n(K_2) \subset K_1$. Since $\varphi_n$ preserves the metric $C_{\Omega}$ we can pass to a subsequence and assume that $\varphi_n|_{K_2}$ converges uniformly to a function $f:K_2 \rightarrow K_1$. Moreover
\begin{align*}
C_{\Omega}(f(p_1), f(p_2)) = \lim_{n \rightarrow \infty} C_\Omega(\varphi_n p_1, \varphi_n p_2) = C_{\Omega}(p_1,p_2)
\end{align*}
for all $p_1, p_2 \in \Omega$. Since $C_\Omega$ is a metric, $f$ is injective. Next pick $\wh{\varphi}_n \in \GL_{d+1}(\Kb)$ so that $\norm{\wh{\varphi}_n}=1$. By passing to a subsequence we may assume that $\wh{\varphi}_n \rightarrow \Phi$ in $\End(\Kb^{d+1})$. Moreover, if $p \in K_2 \setminus \ker \Phi$ then 
\begin{align*}
f(p) = \lim_{n \rightarrow \infty} \varphi_n p = \Phi(p).
\end{align*}
Since $K_2$ has non-empty interior and $f$ is injective this implies that $\Phi$ induces an injective map $\Pb(\Kb^{d+1}) \rightarrow \Pb(\Kb^{d+1})$. Hence $\Phi \in \GL_{d+1}(\Kb)$. Thus $\varphi_n \rightarrow [\Phi]$ in $\PGL_{d+1}(\Kb)$ and since $\Aut(\Omega)$ is closed we see that $[\Phi] \in \Aut(\Omega)$. 
\end{proof}

\subsection{The asymptotic geometry of the intrinsic metric}

\begin{proposition}
\label{prop:bd_behav}
Suppose $\Omega \subset \Pb(\Kb^{d+1})$ is a proper domain, $p_n, q_n \subset \Omega$ are sequences  such that $p_n \rightarrow x \in \partial \Omega$, $q_n \rightarrow y \in \partial \Omega$, and 
\begin{align*}
\lim_{n \rightarrow \infty} C_{\Omega}(p_n,q_n) < \infty.
\end{align*}
Then 
\begin{align*}
y \in \cap\{ \ker f : f \in \Omega^*, f(x)=0\}.
\end{align*}
\end{proposition}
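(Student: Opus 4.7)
The plan is to argue by contraposition: fix an arbitrary $f \in \Omega^*$ with $f(x) = 0$, assume $f(y) \neq 0$, and show this forces $C_\Omega(p_n, q_n) \to \infty$, contradicting the hypothesis. The intuition is that in the defining supremum for $C_\Omega$, a test pair $(f,g)$ for which one of the boundary values vanishes at $x$ while the other three relevant values remain nonzero will detect the fact that $p_n$ is approaching the ``bad'' hyperplane $\ker f$ while $q_n$ is not.

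The first step is to produce an auxiliary witness $g \in \Omega^*$ with $g(x) \neq 0$ and $g(y) \neq 0$. Since $\Omega$ is bounded in some affine chart, $\Omega^*$ has non-empty interior in $\Pb(\Kb^{(d+1)*})$ (as recorded in the observation at the start of this section). For any fixed point $z \in \Pb(\Kb^{d+1})$, the set of projective functionals vanishing at $z$ is a proper projective hyperplane in $\Pb(\Kb^{(d+1)*})$, hence nowhere dense. The union of the two such hyperplanes corresponding to $x$ and $y$ therefore cannot cover the interior of $\Omega^*$, so a suitable $g$ exists.

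The second step is a direct computation. Choose lifts $\hat{p}_n, \hat{q}_n, \hat{x}, \hat{y} \in \Kb^{d+1}$ with $\hat{p}_n \to \hat{x}$ and $\hat{q}_n \to \hat{y}$. Since the supremum defining $C_\Omega(p_n, q_n)$ ranges over all ordered pairs in $\Omega^* \times \Omega^*$, swapping the roles of $f$ and $g$ yields
\begin{align*}
C_\Omega(p_n, q_n) \geq \log \abs{\frac{f(\hat{q}_n) g(\hat{p}_n)}{f(\hat{p}_n) g(\hat{q}_n)}}.
\end{align*}
By continuity of linear functionals, the numerator converges to $\abs{f(\hat{y}) g(\hat{x})} > 0$ while the denominator converges to $\abs{f(\hat{x}) g(\hat{y})} = 0$. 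Thus the right-hand side tends to $+\infty$, contradicting the assumption $\lim_n C_\Omega(p_n, q_n) < \infty$.

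The only substantive step is the construction of $g$ in the first paragraph; once $g$ is in hand the rest reduces to reading off the definition of $C_\Omega$ and taking a limit. The one caveat worth verifying is that the ratio inside the logarithm is well-defined independently of the choice of quaternionic representatives, which follows because the absolute value on $\Kb$ is multiplicative and so the scaling factors from rescaling $\hat{p}_n, \hat{q}_n, f, g$ cancel.
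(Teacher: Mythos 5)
Your proof is correct and is essentially the paper's own argument: the same test pair $(f,g)$ with $g\in\Omega^*$ chosen (via the non-empty interior of $\Omega^*$) to be nonvanishing at $x$ and $y$, and the same lower bound $C_\Omega(p_n,q_n)\geq \log\abs{f(\hat q_n)g(\hat p_n)/(f(\hat p_n)g(\hat q_n))}$. The only difference is presentational — the paper concludes directly that boundedness of $C_\Omega$ forces $\hat f(\hat q_n)\to 0$, while you take the contrapositive — and your closing remark about multiplicativity of $\abs{\cdot}$ on $\Kb$ correctly disposes of the quaternionic well-definedness issue.
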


\begin{proof}
Suppose $f \in \Omega^*$ and $f(x)=0$. Since $\Omega^*$ has non-empty interior there exists $g \in \Omega^*$ so that $g(x) \neq 0$ and $g(y) \neq 0$. Then
\begin{align*}
C_{\Omega}( p_n, q_n ) \geq \log \abs{ \frac{f(q_n)g(p_n)}{f(p_n)g(q_n)}}.
\end{align*}
Let $\hat{p}_n,\hat{q}_n,\hat{x},\hat{y} \in \Kb^{d+1}$ and $\hat{f},\hat{g} \in \Kb^{(d+1)*}$ be representatives of $p_n,q_n,x,y \in \Pb(\Kb^{d+1})$ and $f,g \in \Pb(\Kb^{(d+1)*})$ normalized such that 
\begin{align*}
\norm{\hat{f}}=\norm{\hat{g}}=\norm{\hat{p}_n}=\norm{\hat{q}_n}=\norm{\hat{x}}=\norm{\hat{y}}=1.
\end{align*}
Then 
\begin{align*}
C_{\Omega}( p_n, q_n ) \geq \log \abs{\frac{\hat{f}(\hat{q}_n)}{\hat{f}(\hat{p}_n)}}+\log \abs{ \frac{\hat{g}(\hat{p}_n)}{\hat{g}(\hat{q}_n)}}.
\end{align*}
Since $f(x)=0$, we see that $\hat{f}(\hat{p}_n) \rightarrow 0$. Since $g(x) \neq 0$ and $g(y) \neq 0$, we see that 
\begin{align*}
\log  \abs{\frac{\hat{g}(\hat{p}_n)}{\hat{g}(\hat{q}_n)}}
\end{align*}
 is bounded from above and below. Thus we must have that $\hat{f}(\hat{q}_n) \rightarrow 0$ and so $y \in \ker f$.
\end{proof}

\begin{proposition}
Suppose $\Omega \subset \Pb(\Kb^{d+1})$ is a proper domain and $p_n, q_n \subset \Omega$ are sequences  such that $p_n \rightarrow x \in \overline{\Omega}$. If 
\begin{align*}
 \lim_{n \rightarrow \infty} C_{\Omega}(p_n, q_n) = 0
 \end{align*}
 then $q_n \rightarrow x$.
 \end{proposition}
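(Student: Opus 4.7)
The plan is to pass to a subsequential limit $q_n \to y \in \overline{\Omega}$ and prove $y = x$. If $x \in \Omega$, then Proposition~\ref{prop:metric} gives $C_\Omega(p_n,x) \to 0$, and the triangle inequality yields $C_\Omega(q_n,x) \to 0$, hence $q_n \to x$ in the standard topology.

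Now suppose $x \in \partial\Omega$. The main idea is to reduce to a strictly convex background. Choose an affine chart in which $\Omega$ is bounded, and after rescaling arrange that $\Omega \subset \Bc$, where $\Bc := \{z \in \Kb^d : \norm{z} < 1\}$. The inclusion $\Bc^* \subset \Omega^*$ gives $C_{\Bc} \leq C_\Omega$ on $\Omega$, hence $C_{\Bc}(p_n,q_n) \to 0$. If either $x$ or $y$ lies in the open ball $\Bc$, then since $C_{\Bc}$ generates the standard topology on $\Bc$ (Lemma~\ref{lem:symmetric}), the triangle inequality applied in $(\Bc, C_{\Bc})$ either forces $q_n \to x$ directly or produces a contradiction with $p_n \to x \in \partial \Bc$.

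This reduces us to the subcase $x, y \in \partial \Bc$. Here I would apply Proposition~\ref{prop:bd_behav} to the ball $\Bc$ to get
\begin{align*}
y \in \bigcap \{ \ker f : f \in \Bc^*,\ f(x) = 0\}.
\end{align*}
By strict convexity of $\Bc$, every $\Kb$-hyperplane through $x$ that avoids $\Bc$ must be contained in the real tangent hyperplane to $\partial \Bc$ at $x$ (otherwise it would enter $\Bc$), and within that real tangent hyperplane the unique $\Kb$-hyperplane through $x$ is $T_x^{\Kb}\partial \Bc$; hence the intersection above equals $T_x^{\Kb}\partial \Bc$. Strict convexity then gives $T_x^{\Kb}\partial \Bc \cap \overline{\Bc} = \{x\}$, so $y = x$.

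The main obstacle is this last subcase: for a general, possibly non-convex proper domain the intersection appearing in Proposition~\ref{prop:bd_behav} can be very large, so applying the proposition to $\Omega$ directly does not pin down $y$. The key trick is that the dual inclusion $\Bc^* \subset \Omega^*$ transfers the hypothesis $C_\Omega(p_n,q_n) \to 0$ to the strictly convex background $\Bc$, where the intersection collapses to a single point.
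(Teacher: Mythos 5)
Your argument is correct, but in the crucial boundary case it takes a genuinely longer route than the paper. The paper's entire proof is the comparison trick you found, plus one extra normalization that makes your hard subcase disappear: after choosing the affine chart, it rescales so that the \emph{closure} $\overline{\Omega}$ is contained in the \emph{open} ball $\Bc = \{z \in \Kb^d : \norm{z}<1\}$. Then every point of $\overline{\Omega}$ --- in particular the limit $x$, even when $x \in \partial\Omega$ --- is an interior point of $\Bc$, where $C_{\Bc}$ is a complete metric generating the standard topology (Lemma~\ref{lem:symmetric}). Since $\Bc^* \subset \Omega^*$ gives $C_{\Bc} \leq C_{\Omega}$, one gets $C_{\Bc}(p_n,q_n) \to 0$, and continuity plus the triangle inequality for $C_{\Bc}$ at the interior point $x$ yield $q_n \to x$ immediately. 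Your version, which only arranges $\Omega \subset \Bc$, is forced to confront the possibility $x, y \in \partial\Bc$ and resolves it by applying Proposition~\ref{prop:bd_behav} to $\Bc$ together with the strict convexity of the ball (the intersection of kernels collapsing to $T_x^{\Kb}\partial\Bc$, which meets $\overline{\Bc}$ only at $x$). That analysis is sound --- the supporting-hyperplane and uniqueness claims you make for the ball are correct --- but it is machinery the paper avoids entirely with the sharper normalization. The trade-off: your argument is self-contained modulo Proposition~\ref{prop:bd_behav} and illustrates why strict convexity of the background domain matters, while the paper's is essentially a two-line reduction.
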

 
 \begin{proof}
 Fix an affine chart $\Kb^d$ which contains $\Omega$ as a bounded set. Then after scaling we may assume that
\begin{align*}
\overline{\Omega} \subset \Bc = \{z \in \Cb^d : \norm{z} < 1\}.
\end{align*}
By Lemma~\ref{lem:symmetric},  $C_{\Bc}$ is a complete metric which generated the standard topology on $\Bc$. Moreover $\Bc^* \subset \Omega^*$ and so $C_{\Bc} \leq C_{\Omega}$ on $\Omega$. Then 
\begin{align*}
 \lim_{n \rightarrow \infty} C_{\Bc}(p_n, q_n) = 0
 \end{align*}
and so $q_n \rightarrow x$.
\end{proof}

\begin{corollary}\label{cor:cocpct}
Suppose $\Omega \subset \Pb(\Kb^{d+1})$ is a proper domain and $\Aut(\Omega)$ acts co-compactly on $\Omega$. Then $\Lc(\Omega) = \partial \Omega$. 
\end{corollary}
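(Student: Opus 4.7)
The plan is to use cocompactness to replace an arbitrary sequence in $\Omega$ approaching a boundary point by an orbit of a single interior basepoint, invoking the isometry property of $C_\Omega$ together with the boundary-behavior proposition just proved.

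First I would unpack cocompactness: there exists a compact $K \subset \Omega$ with $\Aut(\Omega) \cdot K = \Omega$. Given an arbitrary $x \in \partial \Omega$, pick any sequence $q_n \in \Omega$ with $q_n \to x$, and write $q_n = \varphi_n k_n$ for some $\varphi_n \in \Aut(\Omega)$ and $k_n \in K$. Since $K$ is compact, after passing to a subsequence $k_n \to k$ for some $k \in K \subset \Omega$.

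Next I would use that $\Aut(\Omega)$ acts by isometries of the intrinsic metric $C_\Omega$ (Proposition~\ref{prop:metric}) to control the displacement between $\varphi_n k$ and $\varphi_n k_n = q_n$: one has
\begin{align*}
C_\Omega(\varphi_n k, \varphi_n k_n) = C_\Omega(k, k_n) \longrightarrow 0
\end{align*}
since $C_\Omega$ generates the standard topology on $\Omega$ and $k_n \to k$. Now apply the preceding proposition (with $p_n := q_n = \varphi_n k_n \to x$ and the auxiliary sequence $\varphi_n k$): since $C_\Omega(p_n, \varphi_n k) \to 0$, it follows that $\varphi_n k \to x$ as well.

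Thus taking $p := k \in \Omega$ and the sequence $\varphi_n \in \Aut(\Omega)$ gives $\varphi_n p \to x$, so $x \in \Lc(\Omega)$. As $x \in \partial \Omega$ was arbitrary and the inclusion $\Lc(\Omega) \subset \partial \Omega$ is built into the definition, we conclude $\Lc(\Omega) = \partial \Omega$. There is no real obstacle here: the only nontrivial input is the boundary-behavior proposition, which has already been established, and the rest is a standard isometry-of-intrinsic-metric argument.
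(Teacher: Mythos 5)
Your argument is correct and is essentially the paper's own proof: both decompose a sequence approaching the boundary point using cocompactness, use the $\Aut(\Omega)$-invariance of $C_\Omega$ to show the orbit of a fixed basepoint stays at intrinsic distance tending to zero, and conclude via the preceding proposition. The only difference is cosmetic (the paper tracks $\varphi_n^{-1}k$ where you track $\varphi_n k$).
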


\begin{proof}
Fix $x \in \partial \Omega$ and a sequence $p_n \in \Omega$ so that $p_n \rightarrow x$. Now there exists a compact set $K \subset \Omega$ and $\varphi_n \in \Aut(\Omega)$ so that $\varphi_n p_n \in K$. We can pass to a subsequence so that $\varphi_n p_n \rightarrow k \in K$. Then 
\begin{align*}
\lim_{n \rightarrow \infty} C_{\Omega}(p_n, \varphi_n^{-1} k) = \lim_{n \rightarrow \infty} C_{\Omega}(\varphi_np_n, k) =0
\end{align*}
and so $\varphi_n^{-1} k \rightarrow x$ by the previous Proposition. 
\end{proof}

\section{Limits of Automorphisms}

\begin{proposition}\label{prop:limits}
Suppose $\Omega \subset\Pb(\Kb^{d+1})$ is proper domain with $C^1$ boundary, $\varphi_n \in \Aut(\Omega)$, and 
\begin{align*}
\varphi_n p \rightarrow x^+ \text{ and } \varphi_n^{-1} p \rightarrow x^-
\end{align*}
where $p \in \Omega$ and $x^+, x^- \in \partial \Omega$. Then 
\begin{enumerate}
\item $\varphi_n q \rightarrow x^+$ and $\varphi_n^{-1} q \rightarrow x^-$ for all $q \in \Omega$, 
\item there exists $f^{\pm} \in \Omega^*$ so that $\ker f^{\pm} = T_{x^{\pm}}^{\Kb} \partial \Omega$, 
\item if $\Phi \in \Pb(\End(\Kb^{d+1}))$ is the element with $\operatorname{Im}(\Phi) = x^+$ and $\ker \Phi = T_{x^-}^{\Kb} \partial \Omega$ then $\varphi_n \rightarrow \Phi$ as elements of $\Pb(\End(\Kb^{d+1}))$, 
\item if $U$ is a neighborhood of $\overline{\Omega} \cap T_{x^-}^{\Kb} \partial \Omega$ and $V$ is a neighborhood of $x^+$ then there exists $N \geq 0$ so that 
\begin{align*}
\varphi_n (\overline{\Omega} \setminus U) \subset V
\end{align*}
for all $n \geq N$. 
\end{enumerate}
\end{proposition}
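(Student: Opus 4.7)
The plan is to analyze subsequential limits of normalized lifts of $\varphi_n$ in $\End(\Kb^{d+1})$, identify these limits as the rank-one endomorphism $\Phi$ described in part~(3), and then derive the remaining conclusions.

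First I would fix lifts $\wh\varphi_n,\wh\psi_n\in\GL_{d+1}(\Kb)$ of $\varphi_n,\varphi_n^{-1}$ with $\norm{\wh\varphi_n}=\norm{\wh\psi_n}=1$ and, after passing to a subsequence, assume $\wh\varphi_n\to\Phi$ and $\wh\psi_n\to\Psi$ in $\End(\Kb^{d+1})$. Neither limit is invertible: if $\Phi\in\GL_{d+1}(\Kb)$ then $\varphi_n\to[\Phi]\in\Aut(\Omega)$ by closedness of $\Aut(\Omega)$ (Proposition~\ref{prop:closed}), forcing $\varphi_n p\to[\Phi]p\in\Omega$ and contradicting $x^+\in\partial\Omega$. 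Non-invertibility of $\Phi$ forces $\norm{\wh\varphi_n^{-1}}\to\infty$, and $\wh\varphi_n\wh\psi_n=\norm{\wh\varphi_n^{-1}}^{-1}I$ then yields $\Phi\Psi=0$ and, symmetrically, $\Psi\Phi=0$.

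Second, I would isolate a $C^1$-rigidity lemma: if $f\in\Omega^*$ vanishes at a $C^1$ point $x\in\partial\Omega$, then $\ker f=T_x^{\Kb}\partial\Omega$. Indeed $\ker f$ is a $\Kb$-hyperplane disjoint from $\Omega$ passing through $x$, so its tangent space at $x$—which equals $\ker f$ itself—must lie inside the real tangent hyperplane $T_x\partial\Omega$; otherwise $\ker f$ would be transverse to $\partial\Omega$ at $x$ and would enter $\Omega$ on one side. Since $T_x^{\Kb}\partial\Omega$ is by definition the unique maximal $\Kb$-subspace of $T_x\partial\Omega$ and $\ker f$ is a $\Kb$-hyperplane sitting inside $T_x\partial\Omega$, we conclude $\ker f=T_x^{\Kb}\partial\Omega$. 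To produce $f^-$, fix $g\in\Omega^*$ with $g(x^+)\neq 0$ and let $f^-:=[{}^t\Phi\wh g]$, which is the limit in $\Omega^*$ (using closedness of $\Omega^*$) of the orbit $^*\varphi_n g$. Using $\Phi\Psi=0$ together with $x^-\in[\operatorname{Im}\Psi]$ (established after, if necessary, replacing $p$ by an auxiliary basepoint in $\Omega\setminus[\ker\Psi]$ and transferring the conclusion via Proposition~\ref{prop:bd_behav} and the $C_\Omega$-isometry property), one checks $f^-(x^-)=\wh g(\Phi\wh{x^-})=0$. The rigidity lemma then yields $\ker f^-=T_{x^-}^{\Kb}\partial\Omega$. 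Algebraically $\ker f^-=[\Phi^{-1}(\ker\wh g)]$, and for this to agree with the fixed hyperplane $T_{x^-}^{\Kb}\partial\Omega$ for all admissible $g$ one must have $\operatorname{rank}(\Phi)=1$, in which case $\ker f^-=[\ker\Phi]$. Hence $[\ker\Phi]=T_{x^-}^{\Kb}\partial\Omega$; since $p\in\Omega$ lies outside $\ker f^-$, $\operatorname{Im}(\Phi)=[\Phi\wh p]=x^+$. The symmetric argument with $\Psi$ produces $f^+\in\Omega^*$ with $\ker f^+=T_{x^+}^{\Kb}\partial\Omega$, $\operatorname{rank}(\Psi)=1$, $\operatorname{Im}(\Psi)=x^-$, and $[\ker\Psi]=T_{x^+}^{\Kb}\partial\Omega$. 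This establishes part~(2).

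Part~(3) is then immediate: every subsequential limit of $\wh\varphi_n$ is, up to scalar, uniquely characterized as the rank-one endomorphism with image $x^+$ and kernel $T_{x^-}^{\Kb}\partial\Omega$, so the full sequence $[\wh\varphi_n]$ converges to $[\Phi]$ in $\Pb(\End(\Kb^{d+1}))$. For part~(1), every $q\in\Omega$ lies outside $T_{x^-}^{\Kb}\partial\Omega=[\ker\Phi]$ (since $\ker f^-\cap\Omega=\emptyset$), so $\varphi_n q=[\wh\varphi_n\wh q]\to[\Phi\wh q]=\operatorname{Im}(\Phi)=x^+$; the statement for $\varphi_n^{-1}$ is symmetric. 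For part~(4), the compact set $\overline\Omega\setminus U$ is contained in $\Pb(\Kb^{d+1})\setminus[\ker\Phi]$, on which $[\wh\varphi_n]\to[\Phi]$ uniformly; since $[\Phi]$ collapses this set to the single point $x^+$, its image sits inside any preassigned neighborhood $V$ of $x^+$ for large $n$.

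The hardest step, I expect, is the $C^1$-rigidity lemma in the complex and quaternionic settings: one must pass from the real tangency condition $\ker f\subset T_x\partial\Omega$ to the $\Kb$-linear equality $\ker f=T_x^{\Kb}\partial\Omega$, which hinges on the uniqueness of the maximal $\Kb$-subspace of $T_x\partial\Omega$—conceptually clean, but requiring some care for non-convex $\Omega$. A secondary nuisance is the degenerate case $\Phi\wh p=0$ or $\Psi\wh p=0$ for the specific lift of $p$, which is circumvented by switching to an auxiliary basepoint in $\Omega$ outside $[\ker\Phi]\cup[\ker\Psi]$ and transferring conclusions via the $C_\Omega$-isometry property together with Proposition~\ref{prop:bd_behav}.
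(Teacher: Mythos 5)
Your argument is correct and rests on the same two pillars as the paper's proof: taking limits of norm-one lifts in $\End(\Kb^{d+1})$, and using the dual set together with $C^1$-rigidity (a supporting $\Kb$-hyperplane at a $C^1$ boundary point must be the tangent hyperplane) to force the limit to have rank one. The organization differs, though. The paper runs everything through the Cartan decomposition $\wh{\varphi}_n = k_{n,1}\,\mathrm{diag}(a_{n,1},\dots,a_{n,d+1})\,k_{n,2}$, tracks which singular-value ratios survive in the limit, and kills the higher-rank possibility by observing that $\Omega^*$ contains an \emph{open} subset of the image of the dual limit all of whose members support $\partial\Omega$ at one point, so that image must be a line. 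You avoid the decomposition entirely, substituting the algebraic relations $\Phi\Psi=\Psi\Phi=0$ and the observation that $\ker({}^t\Phi\wh{g})$ is independent of $g$ as $g$ varies over an open subset of $\Omega^*$; these are essentially dual formulations of the same counting argument, and your version is arguably cleaner since it never needs to show that $\partial\Omega$ contains an open piece of $[\operatorname{Im}\Phi]$. One small repair: in the degenerate case $\Psi\wh{p}=0$, Proposition~\ref{prop:bd_behav} applied to $\varphi_n^{-1}p$ and $\varphi_n^{-1}q$ only places $x^-$ inside $T_{y^-}^{\Kb}\partial\Omega$ (an intersection of kernels), which is a hyperplane, not the line $[\operatorname{Im}\Psi]$ — so it does not by itself yield $x^-\in[\operatorname{Im}\Psi]$. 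The correct and simpler route, already implicit in your write-up, is to run the entire rank-one argument with an auxiliary basepoint $q\in\Omega\setminus([\ker\Phi]\cup[\ker\Psi])$ and its limits $y^{\pm}$; once $\ker\Phi$ and $\ker\Psi$ are identified with tangent hyperplanes, they are disjoint from $\Omega$, so $\Phi\wh{p}\neq0$ and $\Psi\wh{p}\neq0$ after all, and $x^{\pm}=y^{\pm}$ follows with no appeal to Proposition~\ref{prop:bd_behav}.
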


\begin{proof}
Notice that $\varphi_n \rightarrow \infty$ in $\PGL_{d+1}(\Kb)$ since $x^+,x^- \in \partial \Omega$ and $\Aut(\Omega) \leq \PGL_{d+1}(\Kb)$ is closed. 

We begin by proving part (3). Since $\Pb(\Kb^{d+1})$ is compact it is enough to show that any convergent subsequence of $(\varphi_n)_{n \in \Nb}$ converges to $\Phi$. So, by passing to a subsequence,  we can assume that $\varphi_n$ converges. Then let $\wh{\varphi}_n \in \GL_{d+1}(\Kb)$ be a representative of $\varphi_n$ so that $\norm{\varphi_n} =1$ and $\wh{\varphi}_n \rightarrow \Phi_+$ in $\End(\Kb^{d+1})$.  We can write 
\begin{align*}
\wh{\varphi}_n = k_{n,1} \begin{pmatrix} a_{n,1} & & \\ & \ddots & \\ & & a_{n,d+1} \end{pmatrix} k_{n,2} 
\end{align*}
for some $k_{n,1}, k_{n,2} \in \U_{\Kb}(d+1)$ and $1=a_{n,1} \geq \dots \geq a_{n,d+1}$. By passing to a subsequence we can suppose that $k_{n,1} \rightarrow k_1$, $k_{n,2} \rightarrow k_2$ in $\U_{\Kb}(d+1)$ and the limits
\begin{align*}
\lambda_i^+ := \lim_{n \rightarrow \infty} a_{n,i}, \text{ and } \lambda_i^- := \lim_{n \rightarrow \infty} \frac{a_{n,d+1}}{a_{n,i}}
\end{align*}
exist for $1 \leq i \leq d+1$. Then
\begin{align*}
\Phi_+ = \lim_{n \rightarrow \infty} \wh{\varphi}_{n} = k_1  \begin{pmatrix} \lambda_1^+ & & \\ & \ddots & \\ & & \lambda_{d+1}^+ \end{pmatrix} k_2.
\end{align*}
Now $\wh{\varphi}_{n,-}:= a_{n,d+1} \wh{\varphi}_{n}^{-1}$ is a representative of $\varphi_{n}^{-1}$ which converges in $\End(\Kb^{d+1})$ to 
\begin{align*}
\Phi_- := k_2^{-1}  \begin{pmatrix} \lambda_1^- & & \\ & \ddots & \\ & & \lambda_{d+1}^- \end{pmatrix} k_1^{-1}.
\end{align*}

Next identify $\Kb^{(d+1)*}$ with $\Kb^{d+1}$ using the standard inner product and using this identification view $\Omega^*$ as a subset of $\Pb(\Kb^{d+1})$. Then with this identification
\begin{align*}
\Aut(\Omega^*) = \{ ^t\overline{\varphi} : \varphi \in \Aut(\Omega) \}
\end{align*}
where $ ^t\overline{\varphi} \in \PGL_{d+1}(\Kb)$ is the standard conjugate transpose of $\varphi \in \PGL_{d+1}(\Kb)$. 

Now $\wh{\psi}_{n,+}:= a_{n,d+1} (^t\overline{\wh{\varphi}}_{n}^{-1})$ is a representative of $^t\overline{\varphi}_{n}^{-1}$ which converges in $\End(\Kb^{d+1})$ to 
\begin{align*}
\Psi_+ := k_1  \begin{pmatrix} \lambda_1^- & & \\ & \ddots & \\ & & \lambda_{d+1}^- \end{pmatrix} k_2
\end{align*}
and $\wh{\psi}_{n,-}:= ^t\overline{\wh{\varphi}}_{n}$ is a representative of $^t\overline{\varphi}_{n}$ which converges in $\End(\Kb^{d+1})$ to 
\begin{align*}
\Psi_- := k_2^{-1}  \begin{pmatrix} \lambda_1^+ & & \\ & \ddots & \\ & & \lambda_{d+1}^+ \end{pmatrix} k_1^{-1}.
\end{align*}

Next let $m = \max\{ j : \lambda_j^+ \neq 0\}$ and $M = \min\{ j : \lambda_j^- \neq 0\}$. Then $m < M$ because $\varphi_n \rightarrow \infty$ in $\PGL_{d+1}(\Kb)$. 

Next let $e_1, \dots, e_{d+1}$ be the standard basis of $\Kb^{d+1}$. Then $\Phi_+$ maps any open set of $\Pb(\Kb^{d+1}) \setminus \ker \Phi$ onto an open set of $k_1 \Spanset\{e_1, \dots, e_m\}$. Moreover 
\begin{align*}
\Phi_+(z) = \lim_{n \rightarrow \infty} \varphi_{n}(z)
\end{align*}
for any $z \in \Pb(\Kb^{d+1}) \setminus \ker \Phi$ and since $\Aut(\Omega)$ acts properly on $\Omega$ if $q \in \Omega$ any limit point of $\varphi_n q$ is in $\partial \Omega$. Thus since $\Omega$ is open we see that $\partial \Omega$ contains an open subset of $k_1 \Spanset\{e_1, \dots, e_m\}$. The same argument applied to $\Psi_+$ implies that $\Omega^*$ contains an open subset of $k \Spanset\{e_M, \dots, e_{d+1}\}$. However, for $z_1 \in k_1 \Spanset\{e_1, \dots, e_m\}$ and $z_2 \in k_1 \Spanset\{e_M, \dots, e_{d+1}\}$ we have that $\ip{z_1, z_2}=0$. So if 
\begin{align*}
z_1 \in \partial \Omega \cap k_1 \Spanset\{e_1, \dots, e_m\}
\end{align*}
and 
\begin{align*}
z_2 \in \Omega^* \cap k_1 \Spanset\{e_M, \dots, e_{d+1}\}
\end{align*}
we see that 
\begin{align*}
\ker \ip{\cdot, z_2} = T_{z_1}^{\Kb} \partial \Omega.
\end{align*}
Thus $\dimension_{\Kb} k_1 \Spanset\{e_M, \dots, e_{d+1}\} = 1$ and so $M=d+1$. Applying this argument to $\Phi_-$ and $\Psi_-$ we see that $m=1$. 

Now $\operatorname{Im} \Phi_{\pm}=y^\pm$, $\operatorname{Im} \Psi_\pm = f^\pm$, and $\ip{y^\pm, f^\pm}=0$ for some $y^\pm, f^\pm \in \Pb(\Kb^{d+1})$. By the arguments above $y^\pm \in \partial \Omega$ and $f^\pm \in \Omega^*$. So $T_{y^\pm}^{\Kb} \partial \Omega = \ker f^{\pm}$. On the other hand, by construction, $\ker \Phi_\pm = \ker f^\mp$. So $\ker \Phi \cap \Omega = \emptyset$ and for all $q \in \Omega$ we have 
\begin{align*}
y^\pm = \Phi_{\pm}(q) = \lim_{n \rightarrow \infty} \varphi_{n}^{\pm 1} q.
\end{align*}
So $y^\pm = x^\pm$, $T_{x^\pm}^{\Cb} \partial \Omega = \ker f^{\pm}$, and $\ker \Phi = T_{x^-}^{\Cb} \partial \Omega$. This proves part (3).

Part (1), (2), and (4) follow from the proof of part (3). 

\end{proof}

\section{The structure of bi-proximal automorphisms}

Suppose $\varphi \in \PGL_{d+1}(\Kb)$ and $\wh{\varphi} \in \GL_{d+1}(\Kb)$ is a representative of $\varphi$ with $\det \wh{\varphi} = \pm 1$ (see Appendix~\ref{sec:quaternions} for the definition of $\det$ when $\Kb=\Hb$). Then let
\begin{align*}
\sigma_1(\varphi) \leq \sigma_2(\varphi) \leq \dots \leq \sigma_{d+1}(\varphi)
\end{align*}
be the absolute value of the eigenvalues (counted with multiplicity) of $\wh{\varphi}$. Since we are considering absolute values these numbers only depend on $\varphi$.

An element $\varphi \in \PGL_{d+1}(\Kb)$ is called \emph{proximal} if $\sigma_{d}(\varphi) < \sigma_{d+1}(\varphi)$ and is called \emph{bi-proximal} if $\varphi$ and $\varphi^{-1}$ are proximal. When $\varphi$ is bi-proximal let $x^+_{\varphi}$ and $x^-_{\varphi}$ be the eigenlines in $\Pb(\Kb^{d+1})$ corresponding to $\sigma_{d+1}(\varphi)$ and $\sigma_1(\varphi)$.  

\begin{proposition}\label{prop:bi_prox_str}
Suppose $\Omega \subset\Pb(\Kb^{d+1})$ is proper domain with $C^1$ boundary, $\varphi \in \Aut(\Omega)$, and $\sigma_{d+1}(\varphi) > \sigma_1(\varphi)$. Then $\varphi$ is bi-proximal. Moreover, 
\begin{enumerate}
\item $x_{\varphi}^+,x_{\varphi}^- \in \partial \Omega$, 
\item $T^{\Kb}_{x_{\varphi}^+} \partial \Omega \cap \partial \Omega = \{ x_{\varphi}^+\}$,
\item $T^{\Kb}_{x_{\varphi}^-} \partial \Omega \cap \partial \Omega = \{ x_{\varphi}^-\}$, and
\item if $U^+ \subset \overline{\Omega}$ is a neighborhood of $x^+_{\varphi}$ and $U^- \subset \overline{ \Omega}$ is a neighborhood of $x^-_{\varphi}$ then there exists $N>0$ such that for all $m > N$ we have
\begin{align*}
\varphi^m(\partial \Omega \setminus U^-) \subset U^+ \text{ and } \varphi^{-m}(\partial \Omega \setminus U^+) \subset U^-.
\end{align*}
\end{enumerate}
\end{proposition}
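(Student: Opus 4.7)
The plan is to bootstrap from Proposition~\ref{prop:limits} via dynamics. Since $\sigma_{d+1}(\varphi) > \sigma_1(\varphi)$, the ratio $\sigma_{d+1}(\varphi^n)/\sigma_1(\varphi^n)$ tends to infinity, so $\varphi^n \to \infty$ in $\PGL_{d+1}(\Kb)$. Fixing $p \in \Omega$ and using properness of the $\Aut(\Omega)$-action (Proposition~\ref{prop:closed}) together with compactness of $\Pb(\Kb^{d+1})$, I would extract a subsequence $n_k$ with $\varphi^{n_k} p \to x^+ \in \partial\Omega$ and $\varphi^{-n_k} p \to x^- \in \partial\Omega$. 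Proposition~\ref{prop:limits} then produces a rank-one projective limit $\Phi \in \Pb(\End(\Kb^{d+1}))$ with $\operatorname{Im}\Phi = x^+$ and $\ker \Phi = T^{\Kb}_{x^-}\partial \Omega$, and $\varphi^{n_k} \to \Phi$ in $\Pb(\End(\Kb^{d+1}))$.

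Next I would show that $x^+$ and $x^-$ are eigenlines of $\wh{\varphi}$. Applying Proposition~\ref{prop:limits} to the shifted subsequence $\varphi^{n_k+1}$ (which has the same boundary limits $x^\pm$ by part~(1) of that proposition, since $\varphi p \in \Omega$) yields a rank-one projective limit $\Phi'$ with image $x^+$ and kernel $T^{\Kb}_{x^-}\partial\Omega$. After passing to a further subsequence on which the bounded positive real ratios $\|\wh{\varphi}^{n_k+1}\|/\|\wh{\varphi}^{n_k}\|$ converge to some $r > 0$, the identity
\begin{align*}
\wh{\varphi}\cdot\bigl(\wh{\varphi}^{n_k}/\|\wh{\varphi}^{n_k}\|\bigr) = \bigl(\|\wh{\varphi}^{n_k+1}\|/\|\wh{\varphi}^{n_k}\|\bigr)\cdot\bigl(\wh{\varphi}^{n_k+1}/\|\wh{\varphi}^{n_k+1}\|\bigr)
\end{align*}
passes to the limit as $\wh{\varphi}\,\Phi = r\,\Phi'$ in $\End(\Kb^{d+1})$. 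Applied to any $w$, the left side lies in $\wh{\varphi}(\operatorname{Im}\Phi)$ and the right in $\operatorname{Im}\Phi' = x^+$, so $\wh{\varphi}$ preserves the line $x^+$, i.e.\ $x^+$ is an eigenline. Right composition of the same identity shows that $T^{\Kb}_{x^-}\partial\Omega$ is $\wh{\varphi}$-invariant, and running the whole argument with $\wh{\varphi}^{-1}$ shows that $x^-$ is an eigenline as well.

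To upgrade this to bi-proximality I would rule out both a repeated top eigenvalue modulus and a Jordan block at the top modulus. If there were a second eigenline $y \neq x^+$ with eigenvalue of modulus $\mu_1 := \sigma_{d+1}(\varphi)$, a rotation-subsequence $m_j$ (chosen so that the relative phases stabilize via compactness of the unit circle, and using rotations in an $\Rb$-invariant two-plane when $\Kb = \Rb$) would make $\wh{\varphi}^{m_j}/\|\wh{\varphi}^{m_j}\|$ converge projectively to the projection onto $\operatorname{span}(v^+, v_\beta)$, a rank-two endomorphism, contradicting Proposition~\ref{prop:limits} applied along $m_j$. Ruling out a Jordan block at $x^+$ is more delicate: there the rank-one structure persists, but $\Phi$ becomes nilpotent, forcing $x^+ \in T^{\Kb}_{x^-}\partial\Omega$. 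This degenerate configuration is precluded by combining the $C^1$ regularity of $\partial\Omega$ with the dynamics on the $\wh{\varphi}$-invariant projective line through $x^+$ and $x^-$ and a parallel analysis on $\Pb(\Kb^{(d+1)*})$. The symmetric arguments for $\wh{\varphi}^{-1}$ then establish bi-proximality with $x^\pm_\varphi = x^\pm$.

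With bi-proximality in hand, part~(1) follows from Step~1, and combining the $\wh{\varphi}$-invariance of the two tangent hyperplanes from Step~2 with Proposition~\ref{prop:limits}(2) identifies $T^{\Kb}_{x^\pm_\varphi}\partial\Omega$ as the unique $\wh{\varphi}$-invariant $\Kb$-hyperplanes complementary to $x^\mp_\varphi$. For~(2), suppose $y \in T^{\Kb}_{x^+_\varphi}\partial\Omega \cap \partial\Omega$ with $y \neq x^+_\varphi$; the orbit $\varphi^n y$ then remains in this $\wh{\varphi}$-invariant hyperplane, while the $C^1$ regularity forces the tangent hyperplanes $T^{\Kb}_{\varphi^n y}\partial\Omega$ to converge to $T^{\Kb}_{x^+_\varphi}\partial\Omega$; tracking the convergence through the dual action of $\varphi$ on $\Pb(\Kb^{(d+1)*})$ produces a contradiction, and (3) is symmetric. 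Once (2) and (3) are known, $\overline{\Omega}\cap T^{\Kb}_{x^\mp_\varphi}\partial\Omega = \{x^\mp_\varphi\}$, so (4) is exactly Proposition~\ref{prop:limits}(4) applied to the full sequence $\varphi^n$. The main obstacle is Step~3: the rank-one conclusion of Proposition~\ref{prop:limits} does not by itself exclude a nilpotent limit, so the $C^1$ regularity must be used essentially to rule out the degenerate configuration $x^+ \in T^{\Kb}_{x^-}\partial\Omega$ arising from a Jordan block at the top modulus.
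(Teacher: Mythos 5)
Your Steps 1--2 match the structure of the paper's proof, and your diagnosis of where the difficulty lies is accurate, but the proposal does not close the gap it identifies: Step 3(b) --- ruling out the configuration $x^+ \in T^{\Kb}_{x^-}\partial\Omega$ arising from a Jordan block at the top modulus --- is where the proposition actually lives, and ``precluded by combining the $C^1$ regularity with the dynamics and a parallel analysis on $\Pb(\Kb^{(d+1)*})$'' is a placeholder rather than an argument (you concede as much in your closing sentence). The missing ingredients are concrete. Proposition~\ref{prop:limits}(2) supplies $f^\pm \in \Omega^*$ with $\ker f^\pm = T^{\Kb}_{x^\pm}\partial\Omega$; these arise as the attracting directions of the forward and backward dual dynamics, so the same shifted-subsequence/Jordan reasoning you applied to $x^\pm$ shows they are eigenlines of ${}^t\varphi$ whose eigenvalues have moduli $\sigma_{d+1}(\varphi)$ and $\sigma_1(\varphi)$; since these differ by hypothesis, $f^+ \neq f^-$. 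The only use of $C^1$ is then: a hyperplane through a boundary point and disjoint from $\Omega$ must equal the tangent hyperplane there. Hence $f^-(x^+) = 0$ would force $\ker f^- = T^{\Kb}_{x^+}\partial\Omega = \ker f^+$, a contradiction, and symmetrically $f^+(x^-) \neq 0$. This kills your degenerate configuration and does more: it yields the $\varphi$-invariant splitting $\Kb^{d+1} = x^+ \oplus x^- \oplus (\ker f^+ \cap \ker f^-)$, in which $\varphi = \mathrm{diag}(\lambda^+, \lambda^-, A)$; the rank-one limits of $\varphi^{\pm n_k}$ force the spectral radii to satisfy $\rho(A) < \abs{\lambda^+}$ and $\rho(A^{-1}) < \abs{\lambda^-}^{-1}$, which is bi-proximality in one stroke. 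In particular the separate case analysis of your Step 3(a) (repeated top modulus, phase-alignment subsequences) is never needed.

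A second, smaller gap: your proof of part (2) (``tracking the convergence through the dual action produces a contradiction'') is likewise not an argument. With the splitting above it is short: if $z \in T^{\Kb}_{x^+}\partial\Omega \cap \partial\Omega$ and $z \neq x^+$, then $z$ has a nonzero component in $\ker f^+ \cap \ker f^-$, and since every eigenvalue of $A^{-1}$ has modulus strictly greater than $\abs{\lambda^+}^{-1}$, a subsequence of $\varphi^{-m} z$ converges to some $w \in \partial\Omega \cap \ker f^+ \cap \ker f^-$; then $\ker f^+$ and $\ker f^-$ are both supporting hyperplanes at $w$, so by $C^1$ they coincide, again contradicting $f^+ \neq f^-$. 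So the proposal isolates the right obstacle but leaves the essential step --- the dual eigenline comparison together with uniqueness of supporting hyperplanes at a $C^1$ boundary point --- unproved.
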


\begin{proof}
Since $\sigma_{d+1}(\varphi) > \sigma_1(\varphi)$, $\varphi^n \rightarrow \infty$ in $\PGL_{d+1}(\Kb)$.  So fixing $p \in \Omega$ we can find $n_k \rightarrow \infty$ so that 
\begin{align*}
\varphi^{n_k} p \rightarrow x^+ \text{ and } \varphi^{-n_k} p \rightarrow x^-
\end{align*}
for some $x^+, x^- \in \partial \Omega$. By Proposition~\ref{prop:limits} $\varphi^{n_k}$ converges in $\Pb(\End(\Kb^{d+1}))$ to an element $\Phi$ where $\operatorname{Im}(\Phi) = x^+$, $\ker \Phi = T_{x^-}^{\Kb} \partial \Omega$. Moreover, there exists $f^\pm \in \Omega^*$ with $\ker f^\pm = T^{\Cb}_{x^\pm} \partial \Omega$. 

By considering the Jordan block decomposition of $\varphi$ we see that $x^+$ is an eigenline of $\varphi$ with corresponding eigenvalue having absolute value $\sigma_{d+1}(\varphi)$ and $f^-$ is an eigenline of $^t\varphi$ with corresponding eigenvalue having absolute value $\sigma_{1}(\varphi)$. Applying this argument to $\varphi^{-1}$ implies that $x^-$ is an eigenline of $\varphi$ with corresponding eigenvalue having absolute value $\sigma_{1}(\varphi)$ and $f^+$ is an eigenline of $^t\varphi$ with corresponding eigenvalue having absolute value $\sigma_{d+1}(\varphi)$.

Now since $\sigma_1(\varphi) \neq \sigma_{d+1}(\varphi)$ we see that $f^+ \neq f^-$. Then $f^+(x^-) \neq 0$, for otherwise 
\begin{align*}
\ker f^- = T_{x^+}^{\Kb} \partial \Omega = \ker f^+
\end{align*}
which is impossible. Similarly, $f^-(x^+) \neq 0$. 

Now $\varphi$ preserves the subspaces $x^+$, $x^-$, and $\ker f^+ \cap \ker f^-$. So if $v_1, \dots, v_{d+1}$ is a basis of $\Kb^{d+1}$ with $\Kb v_1 = x^+$, $\Kb v_2 = x^-$, and $\ker f^+ \cap \ker f^- = \Spanset_{\Kb}(v_3, \dots, v_{d+1})$ then with respect to this basis $\varphi$ is represented by a matrix of the form 
\begin{align*}
\begin{pmatrix} \lambda^+ & & \\ & \lambda^- & \\ & & A \end{pmatrix} \in \GL_{d+1}(\Kb).
\end{align*}
Since $\operatorname{Im}(\Phi) = x^+$ we see that $\norm{A} < \abs{\lambda^+}$ and applying this argument to $\varphi^{-1}$ implies that  $\norm{A^{-1}} < \abs{\lambda^-}^{-1}$. Thus $\varphi$ is bi-proximal and $x^\pm = x^\pm_{\varphi}$.

We next claim that $ \partial \Omega \cap T_{x^+}^{\Kb} \partial \Omega =\{x^+\}$. Suppose that $z \in  \partial \Omega \cap T_{x^+}^{\Kb} \partial \Omega$ then either $z = x^+$ or $z = [z_1 : 0 : z_2 : \dots : z_{d}]$ and $z_j \neq 0$ for some $2 \leq j \leq d$. In the latter case, there exist $m_i \rightarrow \infty$ such that $\varphi^{-m_i} z \rightarrow w$ and $w = [0:0 : w_2 : \dots : w_d]$. But then $w \in  \partial \Omega \cap T_{x^+}^{\Kb} \partial \Omega \cap T_{x^-}^{\Kb} \partial \Omega$ which is impossible since $\partial \Omega$ is $C^1$. So we have a contradiction and so $z = x^+$. Applying this argument to $\varphi^{-1}$ shows that $ \partial \Omega \cap T_{x^-}^{\Kb} \partial \Omega =\{x^-\}$. 

Finally part (4) follows part (4) of Proposition~\ref{prop:limits}. 
\end{proof}

\section{Finding bi-proximal elements}

\begin{theorem}\label{thm:bi_prox_exist}
Suppose $\Omega \subset\Pb(\Kb^{d+1})$ is proper domain with $C^1$ boundary. If there exists $x,y \in \Lc(\Omega)$ such that $T_x^{\Kb} \partial \Omega \neq T_y^{\Kb} \partial \Omega$ then $\Aut(\Omega)$ contains a bi-proximal element. 
\end{theorem}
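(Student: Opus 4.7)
Since $x, y \in \Lc(\Omega)$, I fix $p \in \Omega$ and sequences $\varphi_n, \psi_n \in \Aut(\Omega)$ with $\varphi_n p \to x$ and $\psi_n p \to y$. Proposition~\ref{prop:closed} shows that $\Aut(\Omega)$ acts properly on $\Omega$, so both sequences leave every compact set of $\PGL_{d+1}(\Kb)$; after passing to subsequences I may assume $\varphi_n^{-1} p \to x^- \in \partial \Omega$ and $\psi_n^{-1} p \to y^- \in \partial \Omega$. Proposition~\ref{prop:limits} then yields rank-one limits in $\Pb(\End(\Kb^{d+1}))$: $\varphi_n \to \Phi$ with $\operatorname{Im}\Phi = x$ and $\ker \Phi = T_{x^-}^{\Kb} \partial \Omega$, and similarly $\psi_n \to \Psi$ with $\operatorname{Im}\Psi = y$ and $\ker \Psi = T_{y^-}^{\Kb} \partial \Omega$. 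Part (4) of that proposition states that for $N$ large, each of $\varphi_N^{\pm 1}, \psi_N^{\pm 1}$ maps the complement of a small neighborhood of the relevant tangent hyperplane into a small neighborhood of the corresponding attracting point.

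The plan is then to construct a bi-proximal $\gamma$ of the form $\gamma := \varphi_N^{\epsilon_1} \psi_N^{\epsilon_2}$ for suitable signs $\epsilon_1, \epsilon_2 \in \{\pm 1\}$ and large $N$, so that $\gamma$ enjoys north--south ping-pong dynamics on $\overline{\Omega}$: the iterates $\gamma^k p$ converge to a boundary point $a^+$ and $\gamma^{-k} p$ converge to a boundary point $a^-$. The sign choice is dictated by transversality conditions among $x, x^-, y, y^-$ and the four tangent hyperplanes $T_x^{\Kb}, T_{x^-}^{\Kb}, T_y^{\Kb}, T_{y^-}^{\Kb}$; the hypothesis $T_x^{\Kb} \neq T_y^{\Kb}$, combined with the freedom to refine the sequences $\varphi_n, \psi_n$ (for instance by passing to further subsequences or by pre- or post-composing with fixed automorphisms), guarantees that at least one of the sign combinations yields the required non-incidences. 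Once such a $\gamma$ is produced, applying Proposition~\ref{prop:limits} to the sequence $(\gamma^k)_{k \in \Nb}$ shows that $\gamma^k$ converges in $\Pb(\End(\Kb^{d+1}))$ to a rank-one element $\Phi_\gamma$ with $\operatorname{Im}\Phi_\gamma = a^+$ and $\ker\Phi_\gamma = T_{a^-}^{\Kb}\partial\Omega$. Writing $\hat\gamma$ in Jordan form, this rank-one limit forces the top eigenvalue of $\hat\gamma$ to be unique in absolute value and to have algebraic multiplicity one, so $\sigma_d(\gamma) < \sigma_{d+1}(\gamma)$; the same argument applied to $\gamma^{-1}$ yields $\sigma_1(\gamma) < \sigma_2(\gamma)$, and hence $\gamma$ is bi-proximal.

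The main obstacle I anticipate is the transversality step: verifying that some sign combination $\gamma = \varphi_N^{\epsilon_1}\psi_N^{\epsilon_2}$ has the needed north--south incidence pattern. This is a finite but delicate casework on which of the four points $x, x^-, y, y^-$ lie in which of the four tangent hyperplanes, and it is where the $C^1$ regularity (which makes the tangent hyperplanes well-defined so that part (4) of Proposition~\ref{prop:limits} applies) and the hypothesis $T_x^{\Kb}\partial\Omega \neq T_y^{\Kb}\partial\Omega$ (which prevents the simultaneous degeneration of every available sign combination) must be used in tandem.
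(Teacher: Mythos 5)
Your overall architecture — use the limit dynamics of Proposition~\ref{prop:limits} to run a ping-pong argument on compositions of $\varphi_N^{\pm1}$ and $\psi_N^{\pm1}$ and extract a bi-proximal element — is the paper's, but two steps you flag as "anticipated obstacles" are genuine gaps, and one of them cannot be closed in the form you propose. First, the casework on sign combinations does not close. Take the configuration $T_{x^-}^{\Kb}\partial\Omega = T_{y^-}^{\Kb}\partial\Omega = T_x^{\Kb}\partial\Omega \neq T_y^{\Kb}\partial\Omega$, which is consistent with all hypotheses. Checking the four products: for $\varphi_N\psi_N^{-1}$ and $\varphi_N^{-1}\psi_N^{-1}$ the ping-pong itself fails, because the attracting point $y^-$ of the first map applied lies in the repelling hyperplane $T_{x^-}^{\Kb}\partial\Omega = T_x^{\Kb}\partial\Omega$ of the second; for $\varphi_N\psi_N$ and $\varphi_N^{-1}\psi_N$ the ping-pong succeeds but the resulting attracting/repelling poles are (near) $x,y^-$ resp.\ $x^-,y^-$, whose tangent hyperplanes coincide, so no eigenvalue gap can be extracted. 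The theorem is still true here because $T_y^{\Kb}\partial\Omega \neq T_{y^-}^{\Kb}\partial\Omega$, so the \emph{single} sequence $\psi_n$ is already eventually bi-proximal — this is the paper's Lemma~\ref{lem:duality}, and its trichotomy (use $\varphi_n$ alone if $T_x \neq T_{x^-}$, use $\psi_n$ alone if $T_y \neq T_{y^-}$, and only otherwise form $\varphi_k\psi_k^{-1}$, where both tangent-hyperplane pairs collapse to $\{T_x\}$ and $\{T_y\}$ and the hypothesis $T_x^{\Kb}\partial\Omega\neq T_y^{\Kb}\partial\Omega$ does the rest) is exactly what your "some sign combination always works" claim is missing.

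Second, your final deduction is false as stated: convergence of $[\gamma^k]$ to a rank-one element of $\Pb(\End(\Kb^{d+1}))$ does \emph{not} force the top eigenvalue to be unique in modulus with multiplicity one. A single unipotent Jordan block $\begin{pmatrix}1&1\\0&1\end{pmatrix}$ has $[\gamma^k]\rightarrow\left[\begin{pmatrix}0&1\\0&0\end{pmatrix}\right]$, rank one, with the same true for $\gamma^{-1}$, yet $\sigma_1(\gamma)=\sigma_2(\gamma)$. The Jordan-form argument only works once you know $\operatorname{Im}\Phi_\gamma \not\subset \ker\Phi_\gamma$, i.e.\ $a^+\notin T_{a^-}^{\Kb}\partial\Omega$, which in turn requires $T_{a^+}^{\Kb}\partial\Omega\neq T_{a^-}^{\Kb}\partial\Omega$ — precisely the condition that fails for the surviving sign combinations in the configuration above. (You also need to justify that $\gamma^{\pm k}p$ actually accumulates on $\partial\Omega$ before Proposition~\ref{prop:limits} applies to the powers.) The paper's route avoids all of this: Lemma~\ref{lem:duality} uses the Brouwer fixed point theorem to produce honest fixed points of $\varphi_n$ in $\overline{\Omega}$ near $x^+$ and $x^-$, observes that the dynamics along the projective line joining them force the two corresponding eigenvalues to have different moduli, hence $\sigma_1(\varphi_n)<\sigma_{d+1}(\varphi_n)$, and then invokes Proposition~\ref{prop:bi_prox_str}, which says that for an automorphism of a proper domain with $C^1$ boundary the single inequality $\sigma_1<\sigma_{d+1}$ already implies bi-proximality. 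You should route your argument through those two results rather than through the rank-one-limit claim.
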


\begin{lemma}\label{lem:duality}
Suppose $\Omega \subset\Pb(\Kb^{d+1})$ is proper domain with $C^1$ boundary, $\varphi_n \in \Aut(\Omega)$, and 
\begin{align*}
\varphi_n p \rightarrow x^+ \text{ and } \varphi_n^{-1} p \rightarrow x^-
\end{align*}
where  $p \in \Omega$ and $x^+, x^- \in \partial \Omega$. If $T_{x^+}^{\Kb} \partial \Omega \neq T_{x^-}^{\Kb} \partial \Omega$ then $\varphi_n$ is bi-proximal for $n$ large enough. Moreover, $x_{\varphi_n}^+ \rightarrow x^+$ and $x_{\varphi_n}^- \rightarrow x^-$.
\end{lemma}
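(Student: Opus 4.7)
The plan is to upgrade the rank-one projective limits produced by Proposition~\ref{prop:limits} into genuine attracting and repelling eigenlines via a Banach contraction argument. Applying that proposition to both $(\varphi_n)$ and $(\varphi_n^{-1})$ yields rank-one limits $\Phi, \Psi \in \Pb(\End(\Kb^{d+1}))$ with $\operatorname{Im}(\Phi) = x^+$, $\ker \Phi = T_{x^-}^{\Kb} \partial \Omega$, $\operatorname{Im}(\Psi) = x^-$, $\ker \Psi = T_{x^+}^{\Kb} \partial \Omega$, and functionals $f^\pm \in \Omega^*$ with $\ker f^\pm = T_{x^\pm}^{\Kb} \partial \Omega$. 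The hypothesis $T_{x^+}^{\Kb} \partial \Omega \neq T_{x^-}^{\Kb} \partial \Omega$ forces $f^+ \neq f^-$; then, arguing exactly as in the proof of Proposition~\ref{prop:bi_prox_str} (using $C^1$ regularity to ensure that each boundary point has a unique tangent hyperplane), one obtains $f^-(x^+) \neq 0$ and $f^+(x^-) \neq 0$, equivalently $x^+ \notin \ker \Phi$ and $x^- \notin \ker \Psi$.

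Next, pick lifts $\wh\varphi_n \in \GL_{d+1}(\Kb)$ with $\norm{\wh\varphi_n} = 1$; after passing to a subsequence and rescaling, $\wh\varphi_n \to \wh\Phi$ in $\End(\Kb^{d+1})$, where $\wh\Phi$ is the rank-one operator $v \mapsto \hat{x}^+ \cdot \hat{f}^-(v)$ for appropriate normalized lifts. Fix an affine chart around $x^+$ transverse to $\ker \Phi$; on this chart $\Phi$ is the constant map to $x^+$, so for $n$ large the projective map $\varphi_n$ sends a fixed small ball $B$ around $x^+$ strictly into itself with Lipschitz constant tending to $0$. The Banach fixed point theorem then produces a unique fixed line $x_{\varphi_n}^+ \in B$ of $\varphi_n$, and clearly $x_{\varphi_n}^+ \to x^+$. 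To see that $x_{\varphi_n}^+$ is the eigenline of largest-modulus eigenvalue, observe that the rank-one limit forces the second singular value $a_{n,2}$ of $\wh\varphi_n$ to tend to $0$; hence in a basis whose first vector lies along $x_{\varphi_n}^+$, the restriction of $\wh\varphi_n$ to a complementary hyperplane has operator norm at most $a_{n,2} \to 0$, while the absolute value of the eigenvalue along $x_{\varphi_n}^+$ tends to $\abs{\hat f^-(\hat x^+)} \neq 0$. Thus $\varphi_n$ is proximal and $x_{\varphi_n}^+$ is its attracting fixed line. Repeating the argument with $\varphi_n^{-1}$ in place of $\varphi_n$ gives $x_{\varphi_n}^- \to x^-$ as the eigenline of smallest-modulus eigenvalue, so $\varphi_n$ is bi-proximal.

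The main obstacle, and really the only subtle step, is the quaternionic case $\Kb = \Hb$: the contraction argument is purely real-analytic and so goes through identically over $\Rb$, $\Cb$, and $\Hb$, but the final identification of the Banach fixed line as a right eigenline for the eigenvalue of largest absolute value uses the quaternionic conventions from the appendix. Once this translation is in place the outline above works uniformly in all three settings.
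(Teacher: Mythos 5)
Your route is genuinely different from the paper's. The paper does not attempt to read off proximality from the linear algebra of $\wh{\varphi}_n$ directly: it produces fixed points $x_n^\pm \in \overline{\Omega}$ near $x^\pm$ by applying the \emph{Brouwer} fixed point theorem to $\varphi_n^{\pm 1}$ acting on $U^\pm \cap \overline{\Omega}$ (a topological closed ball), then chooses the neighborhoods so that the projective line $L(x_n^+, x_n^-)$ meets a fixed compact set $K \subset \Omega$; since $\varphi_n$ pushes points of $K$ on this invariant line toward $x^+$ and $\varphi_n^{-1}$ pushes them toward $x^-$, the two eigenvalues along $x_n^\pm$ must have different moduli, so $\sigma_1(\varphi_n) < \sigma_{d+1}(\varphi_n)$, and Proposition~\ref{prop:bi_prox_str} --- which uses the domain geometry rather than linear algebra --- upgrades this single gap to bi-proximality and identifies $x_n^\pm = x_{\varphi_n}^\pm$. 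Your Banach contraction argument is a legitimate replacement for the Brouwer step; both need the same input $x^\pm \notin T_{x^\mp}^{\Kb}\partial\Omega$, which as you say follows from the $f^+ \neq f^-$ argument in Proposition~\ref{prop:bi_prox_str}, and yours gives uniqueness and convergence of the fixed line essentially for free.

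However, the step where you deduce proximality is not correct as written. It is false in general that, in a basis whose first vector spans $x_{\varphi_n}^+$, the restriction of $\wh{\varphi}_n$ to a complementary hyperplane has operator norm at most $a_{n,2}$: the complement need not be $\wh{\varphi}_n$-invariant, and even for the invariant complement the norm of the restriction is not controlled by the second singular value (for $\begin{pmatrix} 1 & 0 \\ N & \epsilon \end{pmatrix}$ the invariant complement to the $1$-eigenline is the $\epsilon$-eigenline, on which the norm is $\epsilon$, while $a_2 \approx \epsilon/N$). The conclusion you want does hold, but the correct tool is Weyl's inequality $\abs{\lambda_1 \lambda_2} \leq a_{n,1} a_{n,2}$ comparing the two largest eigenvalue moduli with the two largest singular values (applied, when $\Kb = \Hb$, to the complex matrix representing $\wh{\varphi}_n$, consistent with the paper's definition of the $\sigma_i$): since $a_{n,1} = 1$, $a_{n,2} \rightarrow 0$, and the eigenvalue along $x_{\varphi_n}^+$ has modulus bounded below by your computation with $\hat{f}^-(\hat{x}^+) \neq 0$, every other eigenvalue modulus tends to $0$, so $\varphi_n$ is proximal with attracting line $x_{\varphi_n}^+$. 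With that repair, and the same argument for $\varphi_n^{-1}$, your proof goes through.
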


Given two points $x,y \in \Pb(\Kb^{d+1})$ let $L(x,y)$ be the projective line containing $x$ and $y$. 

\begin{proof}
We first claim that for $n$ large enough $\varphi_n$ has fixed points $x_n^+, x_n^- \in \overline{\Omega}$. Fix compact neighborhoods $U^{\pm}$ of $x^\pm$ with the following properties:
\begin{enumerate}
\item $U^\pm \cap T_{x^\mp}^{\Kb} \partial \Omega = \emptyset$,
\item $U^\pm \cap \overline{\Omega}$ is topologically a closed ball, 
\item there exist a compact set $K \subset \Omega$ so that if $y^+ \in U^+$ and $y^- \in U^-$ then $L(y^+, y^-) \cap K \neq \emptyset$.
\end{enumerate}
Since $x^\pm \notin T_{x^\mp}^{\Cb} \partial \Omega$ part (1) holds for small enough neighborhoods. Since $\partial \Omega$ is a $C^1$ hypersurface it is always possible to shrink a neighborhood so that part (2) holds. Finally since the line $L(x^+, x^-)$ is transverse to $\partial \Omega$ at $x^+$ and $x^-$, part (3) holds for small enough neighborhoods. 

Now by Proposition~\ref{prop:limits} there exists $N \geq 0$ so that 
\begin{align*}
\varphi_n(U^\pm \cap  \overline{\Omega}) \subset U^\pm \cap \overline{\Omega}
\end{align*}
for all $n \geq N$. So by the Bouwer fixed point theorem, for $n$ large enough  $\varphi_n$ has a fixed point $x^\pm_n \in U^\pm \cap \overline{\Omega}$. Now fix points $k_n \in K \cap L(x_n^+, x_n^-)$. Since $K \subset \Omega$ is compact and $\varphi_n^{\pm 1} q \rightarrow x^\pm$ for all $q \in \Omega$ we see that 
\begin{align*}
\varphi_n k_n \rightarrow x^+ \text{ and } \varphi_n^{-1} k_n \rightarrow x^-.
\end{align*}
So for large $n$ the ratio of the absolute value of the eigenvalues of $\varphi_n$ corresponding to the lines $x_n^+$ and $x_n^-$ must be different. So for large $n$, $\sigma_{d+1}(\varphi_n) > \sigma_1(\varphi_n)$. Thus $\varphi_n$ is bi-proximal by Proposition~\ref{prop:bi_prox_str}. Then by part (4) of Proposition~\ref{prop:bi_prox_str} we see that $x_n^{\pm} = x_{\varphi_n}^{\pm}$. 

Finally we can choose $U^+$ and $U^-$ to be arbitrary small neighborhoods of $x^+$ and $x^-$ which implies that $x_{\varphi_n}^+ \rightarrow x^+$ and $x_{\varphi_n}^- \rightarrow x^-$.
\end{proof}

\begin{lemma}\label{lem:generic_endpoints}
Suppose $\Omega \subset\Pb(\Kb^{d+1})$ is proper domain with $C^1$ boundary, $\varphi_n, \phi_m \in \Aut(\Omega)$,  
\begin{align*}
\varphi_n p \rightarrow x^+ , \varphi_n^{-1} p \rightarrow x^-, \phi_m p \rightarrow y^+, \text{ and } \phi_m^{-1} p \rightarrow y^-
\end{align*}
where $p \in \Omega$ and $x^+, x^-, y^+, y^- \in \partial \Omega$. If 
\begin{align*}
\left\{ T_{x^+}^{\Kb} \partial \Omega, T_{x^-}^{\Kb} \partial \Omega\right\} \cap \left\{ T_{y^+}^{\Kb} \partial \Omega, T_{y^-}^{\Kb} \partial \Omega\right\} = \emptyset
\end{align*}
then $\gamma_k : = \varphi_{k} \phi_{k}^{-1}$ is bi-proximal for $k$ large enough. Moreover
\begin{align*}
\gamma_k p \rightarrow x^+ \text{ and } \gamma_k^{-1} p \rightarrow y^+.
\end{align*}
\end{lemma}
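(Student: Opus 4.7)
The plan is to verify the hypotheses of Lemma~\ref{lem:duality} for the sequence $\gamma_k = \varphi_k\phi_k^{-1}$. The condition $T_{x^+}^{\Kb}\partial\Omega\neq T_{y^+}^{\Kb}\partial\Omega$ is part of the standing hypothesis, so the substantive task is to establish the two limits $\gamma_k p\to x^+$ and $\gamma_k^{-1} p\to y^+$; Lemma~\ref{lem:duality} will then immediately deliver bi-proximality of $\gamma_k$ for all sufficiently large $k$.

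The first limit $\gamma_k p\to x^+$ would be obtained from Proposition~\ref{prop:limits}(4) applied to $\varphi_k$. That proposition states: for any open neighborhood $V$ of $x^+$ and any open neighborhood $U$ of $\overline{\Omega}\cap T_{x^-}^{\Kb}\partial\Omega$, there exists $N$ such that $\varphi_k(\overline{\Omega}\setminus U)\subset V$ for all $k\geq N$. The sequence $\phi_k^{-1}p\in\Omega$ converges to $y^-\in\partial\Omega$; provided $y^-\notin T_{x^-}^{\Kb}\partial\Omega$, one can shrink $U$ so that $y^-\notin \overline{U}$, and then $\phi_k^{-1}p\notin U$ for all large $k$, whence $\gamma_k p = \varphi_k(\phi_k^{-1}p)\in V$. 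Since $V$ was an arbitrary neighborhood of $x^+$, this gives $\gamma_k p\to x^+$. A symmetric argument applied to $\phi_k$ and the sequence $\varphi_k^{-1}p\to x^-$ yields $\gamma_k^{-1}p\to y^+$ provided the companion condition $x^-\notin T_{y^-}^{\Kb}\partial\Omega$ holds. Equivalently (and likely more cleanly), one can compute the $\Pb(\End(\Kb^{d+1}))$-limit of $\gamma_k$ directly: writing the Proposition~\ref{prop:limits}(3) limits as $\hat\Phi = v^+\otimes \alpha$ (image $x^+$, kernel $T_{x^-}^{\Kb}\partial\Omega$) and $\hat\Phi' = u^-\otimes\beta$ (image $y^-$, kernel $T_{y^+}^{\Kb}\partial\Omega$) for $\varphi_k$ and $\phi_k^{-1}$ respectively, one finds $\hat\Phi\hat\Phi' = \alpha(u^-)\,(v^+\otimes\beta)$, whose nonvanishing is again the condition $y^-\notin T_{x^-}^{\Kb}\partial\Omega$; then the limit is rank one with image $x^+$ and kernel $T_{y^+}^{\Kb}\partial\Omega$, and since $p\in\Omega$ avoids $T_{y^+}^{\Kb}\partial\Omega$ (the latter being the kernel of an element of $\Omega^*$), we recover $\gamma_k p\to x^+$.

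The main obstacle is therefore to establish the two nondegeneracy conditions $y^-\notin T_{x^-}^{\Kb}\partial\Omega$ and $x^-\notin T_{y^-}^{\Kb}\partial\Omega$, which must be extracted from the pairwise distinctness of the four tangent planes together with the $C^1$ regularity of $\partial\Omega$. To argue the first, I would suppose for contradiction that $y^-\in T_{x^-}^{\Kb}\partial\Omega = \ker f^{x^-}$, where $f^{x^-}\in\Omega^*$ is the dual element provided by Proposition~\ref{prop:limits}(2), and study the contragredient orbit $f_k := {}^*\phi_k^{-1}(f^{x^-})\in\Omega^*$. Using the $\Pb(\End(\Kb^{d+1}))$-convergence of $\phi_k^{-1}$ guaranteed by Proposition~\ref{prop:limits}(3) together with a careful rescaling, one should show that any subsequential limit $f_\infty$ of $(f_k)$ in the compact set $\Omega^*$ satisfies $f_\infty(p)=0$, contradicting $\ker f_\infty\cap\Omega = \emptyset$ for $f_\infty\in\Omega^*$ and $p\in\Omega$. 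The symmetric argument rules out $x^-\in T_{y^-}^{\Kb}\partial\Omega$, and combining with the preceding paragraph completes the verification of the hypotheses of Lemma~\ref{lem:duality}.
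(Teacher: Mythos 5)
Your first two paragraphs are essentially the paper's proof. The paper fixes compact neighborhoods $U^\pm$ of $x^\pm$ and $V^\pm$ of $y^\pm$ disjoint from the tangent hyperplanes at the other pair of points, uses Proposition~\ref{prop:limits}(4) to get $\varphi_k(V^+\cup V^-)\subset U^+$, $\phi_k(U^+\cup U^-)\subset V^+$, $\phi_k^{-1}p\in V^-$ and $\varphi_k^{-1}p\in U^-$ for large $k$, concludes $\gamma_k p\in U^+$ and $\gamma_k^{-1}p\in V^+$, shrinks the neighborhoods to get the two limits, and then invokes Lemma~\ref{lem:duality} exactly as you do. Your alternative computation via composing the rank-one limits of Proposition~\ref{prop:limits}(3) is a harmless variant that isolates the same nondegeneracy conditions.

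The problem is your third paragraph. You are right that conditions such as $y^-\notin T_{x^-}^{\Kb}\partial\Omega$ must be verified (the paper leaves this implicit when it asserts its neighborhoods exist), but the contragredient-orbit argument you sketch does not work as stated. If $f^{x^-}(y^-)=0$, then composing $f^{x^-}$ with the rank-one limit $\Phi'$ of $\phi_k^{-1}$ (whose image is precisely $y^-$) gives the zero functional, so the first-order information from Proposition~\ref{prop:limits} degenerates exactly in the case you are trying to exclude; identifying a subsequential limit $f_\infty$ of ${}^*\phi_k^{-1}(f^{x^-})$, let alone evaluating it at $p$, would require a second-order analysis you have not supplied. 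No such analysis is needed. By Proposition~\ref{prop:limits}(2), $T_{x^-}^{\Kb}\partial\Omega=\ker f^{x^-}$ with $f^{x^-}\in\Omega^*$, so this $\Kb$-hyperplane is disjoint from $\Omega$. If it contained $y^-\in\partial\Omega$ it would be a supporting $\Kb$-hyperplane at $y^-$; since $\partial\Omega$ is $C^1$, any $\Kb$-hyperplane through $y^-$ other than $T_{y^-}^{\Kb}\partial\Omega$ is transverse to $\partial\Omega$ there and hence meets $\Omega$. So $T_{x^-}^{\Kb}\partial\Omega=T_{y^-}^{\Kb}\partial\Omega$, contradicting the hypothesis. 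This one-line argument (used repeatedly elsewhere in the paper, e.g.\ in the proofs of Lemma~\ref{lem:duality} and Proposition~\ref{prop:bi_prox_str}) supplies all four exclusions needed to choose the neighborhoods, and with it your proof closes.
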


\begin{proof}
Fix compact neighborhoods $U^{\pm}$ of $x^\pm$ and $V^{\pm}$ of $y^\pm$ so that 
\begin{align*}
(U^+ \cup U^-) \cap \left(  T_{y^+}^{\Kb} \partial \Omega \cup T_{y^-}^{\Kb} \partial \Omega \right)= \emptyset
\end{align*}
and 
\begin{align*}
(V^+ \cup V^-) \cap \left(  T_{x^+}^{\Kb} \partial \Omega \cup T_{x^-}^{\Kb} \partial \Omega \right)= \emptyset.
\end{align*}
Now by Proposition~\ref{prop:limits} there exists $N \geq 0$ so that $\varphi_n^{-1} p \in U^-$ and $\varphi_n (V^+ \cup V^-) \subset U^+$ for all $n \geq N$. Likewise there exists $M \geq 0$ so that $\phi_m^{-1} p \in V^-$ and $\phi_m(U^+ \cup U^-) \subset V^+$ for all $m \geq M$. Then if $k \geq \max\{M,N\}$ and $\gamma_k := \varphi_k \phi_k^{-1}$ we see that $\gamma_k p \in U^+$ and $\gamma_k^{-1} p \in V^+$. 

Since $U^+$ and $V^+$ can be choosen to be arbitrary small neighborhoods of $x^+$ and  $y^+$ respectively, we see that 
\begin{align*}
\gamma_{k} p \rightarrow x^+ \text{ and } \gamma_{k}^{-1} p \rightarrow y^+.
\end{align*}
Finally since
\begin{align*}
 T_{x^+}^{\Kb} \partial \Omega \neq T_{y^+}^{\Kb} \partial \Omega
\end{align*}
Lemma~\ref{lem:duality} implies that $\gamma_{k}$ is bi-proximal for large $k$. 
\end{proof}

\begin{proof}[Proof of Theorem~\ref{thm:bi_prox_exist}]
Fix sequences $\varphi_n, \phi_m \in \Aut(\Omega)$ so that 
\begin{align*}
\varphi_n p \rightarrow x \text{ and } \phi_m p \rightarrow y
\end{align*}
for some $p \in \Omega$. By passing to a subsequence we may suppose that 
\begin{align*}
\varphi_n^{-1} p \rightarrow x^- \text{ and } \phi_m^{-1} p \rightarrow y^-.
\end{align*}
If $T_x^{\Kb} \partial \Omega \neq T_{x^-}^{\Kb} \partial \Omega$ then $\varphi_n$ is bi-proximal for large $n$ by Lemma~\ref{lem:duality}. Likewise if $T_y^{\Kb} \partial \Omega \neq T_{y^-}^{\Kb} \partial \Omega$ then $\phi_m$ is bi-proximal for large $m$ by Lemma~\ref{lem:duality}. 

So suppose that $T_x^{\Kb} \partial \Omega = T_{x^-}^{\Kb} \partial \Omega$ and $T_y^{\Kb} \partial \Omega = T_{y^-}^{\Kb} \partial \Omega$. Then 
\begin{align*}
\left\{ T_{x}^{\Kb} \partial \Omega, T_{x^-}^{\Kb} \partial \Omega\right\} \cap \left\{ T_{y}^{\Kb} \partial \Omega, T_{y^-}^{\Kb} \partial \Omega\right\} = \emptyset
\end{align*}
and so $\varphi_{k} \phi_{k}^{-1}$ is bi-proximal for large $k$ by Lemma~\ref{lem:generic_endpoints}. 
\end{proof}

\section{Rescaling with bi-proximal elements}

\begin{definition} If $\Kb$ is either $\Rb$, $\Cb$, or $\Hb$ let $\Kb_{P}$ be the \emph{purely imaginary numbers} in $\Kb$, that is $\Rb_P = (0)$, $\Cb_P = i\Rb$, and $\Hb_P = i\Rb + j\Rb + k\Rb$. 
\end{definition}

Suppose $\Omega$ is a proper domain with $C^1$ boundary. If $\varphi \in \Aut(\Omega)$ is bi-proximal, then we have the following standard form. First let $H^{\pm}$ be the $\Kb$-tangent hyperplane at $x^{\pm}_\varphi$. Then pick coordinates such that
\begin{enumerate}
\item $x^+_\varphi = [1:0:\dots:0]$,
\item $x^-_\varphi = [0:1:0: \dots :0]$,
\item $H^+ \cap H^- = \{ [0:0:z_2:\dots:z_d] : z_2,\dots, z_d \in \Kb\}$.
\end{enumerate}
With respect to these coordinates, $\varphi$ is represented by a matrix of the form
\begin{align*}
\begin{pmatrix}
\lambda^+ &  & \\
 & \lambda^- &  \\
 & & A
\end{pmatrix} \in \GL_{d+1}(\Kb)
\end{align*}
where $A$ is a $(d-1)$-by-$(d-1)$ matrix. Since 
\begin{align*}
H^-=\{[0:z_1:\dots:z_d]: z_1 \dots, z_d \in \Kb\}
\end{align*}
and $\Omega \cap H^- = \emptyset$ we see that $\Omega$ is contained in the affine chart 
\begin{align*}
\Kb^d = \{ [1: z_1: \dots : z_d ] : z_1, \dots, z_d \in \Kb\}.
\end{align*}
In this affine chart $x^+_\varphi$ corresponds to $0$ and $T^{\Kb}_0 \partial \Omega = \{0\} \times \Kb^{d-1}$. Then by a projective transformation we may assume that
\begin{enumerate}
\setcounter{enumi}{3}
\item $T_0 \partial \Omega =  \Kb_P \times \Kb^{d-1}$
\end{enumerate}
Since $\partial \Omega$ is $C^1$ there exists open neighborhoods $V \subset \Kb_P$ of $0$, $W \subset \Rb$ of $0$, an open neighborhood $U \subset \Kb^{d-1}$ of $0$, and a $C^1$ function $F:V \times U \rightarrow W$ such that if $\Oc = (V+W) \times U$ then 
\begin{enumerate}
\setcounter{enumi}{4}
\item $\partial \Omega \cap \Oc = \{ (z_1, \dots, z_d) \in \Oc : \Real(z_1) = F(z_1 - \Real(z_1), z_2, \dots, z_d)  \}$.
\end{enumerate}
By another projective transformation we can assume 
\begin{enumerate}
\setcounter{enumi}{5}
\item $\Omega \cap \Oc = \{ (z_1, \dots, z_d) \in \Oc : \Real(z_1) > F(z_1 - \Real(z_1), z_2, \dots, z_d)  \}$.
\end{enumerate}

\begin{theorem}
\label{thm:blow_up}
With the choice of coordinates above, the function $F$ extends to $\Kb_P \times \Kb^{d-1}$ and 
\begin{align*}
\Omega = \{ (z_1, \dots, z_d) \in \Kb^d : \Real(z_1) >F(0, z_2, \dots, z_d) \}.
\end{align*}
Moreover if $\Kb$ is either $\Cb$ or $\Hb$ then for 
\begin{align*}
h = \begin{pmatrix} a & b \\ c & d \end{pmatrix} \in \SL_2(\Kb)
\end{align*}
with $[h] \in \Aut_0(\{ z \in \Kb : \Real(z) > 0\})$ the projective transformation defined by 
\begin{align*}
\psi_h \cdot [z_1, \dots, z_d] = [ az_1 + bz_2 : cz_1 + d z_2 : z_3 : \dots : z_{d+1}]
\end{align*}
is in $\Aut_0(\Omega)$.
\end{theorem}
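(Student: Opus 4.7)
The plan is to use the bi-proximal element $\varphi$ as a dynamical rescaling tool to extend the local graph description of $\partial \Omega$ to a global one. In the chosen coordinates $\varphi$ acts on the affine chart essentially as $(z_1, z') \mapsto (\mu z_1, B z')$, where $|\mu| = |\lambda^-|/|\lambda^+| < 1$ and the singular values of $B := A/\lambda^+$ all lie strictly inside $(|\mu|, 1)$ by bi-proximality, so the $z_1$-direction contracts strictly faster than every $z'$-direction. This strict separation of contraction rates is the central dynamical input. (For $\Kb = \Hb$ the action on $z_1$ is the conjugation-like map $z_1 \mapsto \lambda^- z_1 (\lambda^+)^{-1}$; for clarity I sketch the argument as if $\mu \in \Rb$.)

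\textbf{Part 1.} Applying the $\varphi$-invariance of $\partial \Omega$ to the local graph equation $\Real(z_1) = F(\Imag z_1, z')$ on $\Oc$ yields the functional equation
\[
F(\mu y, B w) = \mu F(y, w) \quad \text{on } V \times U.
\]
This lets me extend $F$ to all of $\Kb_P \times \Kb^{d-1}$ by setting $F(y, w) := \mu^{-n} F(\mu^n y, B^n w)$ for any $n$ large enough that $(\mu^n y, B^n w) \in V \times U$; such $n$ always exists since $\mu^n y \to 0$ and $B^n w \to 0$, and well-definedness follows from the functional equation. To conclude $F$ is independent of $y$, differentiate the equation in $y$ and iterate:
\[
\partial_y F(y, w) = \partial_y F(\mu^n y, B^n w) \longrightarrow \partial_y F(0, 0) = 0
\]
as $n \to \infty$, where the vanishing at the origin is forced by the prescribed tangent $T_0^{\Kb} \partial \Omega = \Kb_P \times \Kb^{d-1}$. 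Hence $F(y, w) = F(0, w)$, and the global equality $\Omega = \{(z_1, z') : \Real(z_1) > F(0, z')\}$ follows since every point of the affine chart is carried into $\Oc$ by some forward iterate of $\varphi$.

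\textbf{Part 2.} With the form of $\Omega$ from Part 1, the action of $\psi_h$ is the Möbius transformation $z_1 \mapsto (c + dz_1)(a + bz_1)^{-1}$ on the first coordinate together with the right-scaling $z' \mapsto z'(a + bz_1)^{-1}$. Using the identity
\[
\Real\bigl((c + dz_1)(a + bz_1)^{-1}\bigr) = \Real(z_1) \cdot \abs{a + bz_1}^{-2},
\]
valid for $h \in \SL_2(\Kb)$ with $[h] \in \Aut_0(\{\Real(z) > 0\})$, the invariance $\psi_h \Omega = \Omega$ reduces to the scalar homogeneity $F(0, z' \lambda) = \abs{\lambda}^2 F(0, z')$ for $\lambda \in \Kb^\times$. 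I obtain this from a bootstrap: the translations $(z_1, z') \mapsto (z_1 + c, z')$ with $c \in \Kb_P$ are immediately in $\Aut(\Omega)$ from the form of $\Omega$, and combining them with powers of $\varphi$ and using closedness of $\Aut(\Omega)$ in $\PGL_{d+1}(\Kb)$ (Proposition~\ref{prop:closed}) produces a continuous one-parameter subgroup of dilations; together, these generate the family $\{\psi_h\}$ and in particular force the required homogeneity of $F(0, \cdot)$.

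\textbf{Main obstacle.} The key difficulty is Part 2: extracting the scalar $\abs{\lambda}^2$-homogeneity of $F(0, \cdot)$ from the matrix-valued functional equation $F(0, Bw) = \mu F(0, w)$ of Part 1, since $B$ is not a priori a scalar matrix. The bootstrap via translations plus closedness is the natural route, but carrying it out requires a careful continuity/density argument. The quaternionic setting adds non-trivial bookkeeping throughout, because $z_1 \mapsto \lambda^- z_1 (\lambda^+)^{-1}$ is not scalar multiplication and the functional equation together with the scalar-homogeneity extraction must be phrased using the right-module structure.
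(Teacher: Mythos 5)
Your Part 1 is essentially the paper's argument and is sound: the functional equation $F(\mu y, Bw) = \mu F(y,w)$, differentiation in $y$ plus iteration and $\partial_y F(0,0)=0$ to kill the dependence on $\Imag(z_1)$, and extension of $F$ by the same equation, with the global identity for $\Omega$ coming from the fact that every point of $\Omega$ is eventually pushed into $\Oc$ by a forward iterate of $\varphi$ (Proposition~\ref{prop:limits}(4)).

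Part 2, however, has a genuine gap, and it sits exactly at the spot you flag as the "main obstacle." The bootstrap you propose --- imaginary translations $(z_1,z')\mapsto(z_1+w,z')$ together with powers of $\varphi$ and closedness of $\Aut(\Omega)$ --- cannot produce the dilations $a_t$. Conjugating an imaginary translation by $\varphi^n$ just gives another imaginary translation, so the group generated is $\langle\varphi\rangle\ltimes\Kb_P$; since $\varphi^n\to\infty$ in $\PGL_{d+1}(\Kb)$, its closure inside $\Aut(\Omega)$ has identity component $\Kb_P$ and contains no new one-parameter subgroups. Nothing forces a dilation subgroup to exist at this stage, and indeed the homogeneity $F(0,z'\lambda)=\abs{\lambda}^2F(0,z')$ is a \emph{consequence} of the theorem (it is derived only later, in the proof of Theorem~\ref{thm:C2}), not a route to it. The missing idea is to run the entire Part 1 rescaling a second time, for $\varphi^{-1}$ at the repelling fixed point: viewing $\Omega$ as a bounded subset of the affine chart $\{[z_1:1:z_2:\dots:z_d]\}$ complementary to $H^+=T^{\Kb}_{x^+_\varphi}\partial\Omega$, the same argument shows that the maps corresponding to $\begin{pmatrix}1 & w\\ 0 & 1\end{pmatrix}$, $w\in\Kb_P$ --- which in your original chart are the parabolic maps $z\mapsto \bigl(z_1(1+wz_1)^{-1}, z_2(1+wz_1)^{-1},\dots\bigr)$ fixing $0$, not affine translations --- also lie in $\Aut_0(\Omega)$. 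With \emph{both} opposite unipotent subgroups in hand, Proposition~\ref{prop:auto_halfspace}(3) says they generate all of $\Aut_0(\Hc_+)$ acting in the first two homogeneous coordinates, which yields every $\psi_h$ (dilations included). Without this second application of the rescaling argument at $x^-_\varphi$, Part 2 does not close.
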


\begin{remark} A special case of the above Theorem was established in~\cite[Theorem 6.1]{Z2013}. Namely when $\Kb = \Cb$ and in addition $\Omega$ is a $\Cb$-convex set. 
\end{remark}

\begin{proof}
We can assume $\Oc$ is bounded. Then by Proposition~\ref{prop:limits} we can replace $\varphi$ with a power of $\varphi$ so that $\varphi(\Oc) \subset \Oc$.

We first claim that $F(x,z) = F(0,z)$ for $(x,z) \in V \times U$. Notice that with our choice of coordinates $\varphi$ acts by
\begin{align*}
\varphi \cdot (z_1, \vec{z}) = \left( \frac{\lambda^-z_1}{\lambda^+}, \frac{A\vec{z}}{\lambda^+}\right)
\end{align*}
where $\lambda^{\pm}$ and $A$ are as above. Since $\varphi$ is bi-proximal 
\begin{align*}
\left(\frac{A}{\lambda^+}\right)^n \rightarrow 0.
\end{align*}
Since $\varphi$ preserves $T_0\partial \Omega = \Kb_P \times \Kb^{d-1}$ we see that $\lambda^-/\lambda^+ \in \Rb$. Since $x^+_{\varphi}$ is an attracting fixed point we have $\lambda^-/\lambda^+ \in (-1,1)$. Finally since
\begin{align*}
\varphi \cdot (x+F(x,z), z) = \left( \frac{\lambda^-}{\lambda^+}x+\frac{\lambda^-}{\lambda^+}F\left(x,z\right), \frac{A}{\lambda^+}z\right)
\end{align*}
and $\varphi(\Oc) \subset \Oc$ we see that 
\begin{align*}
F\left(\frac{\lambda^-}{\lambda^+}x,\frac{A}{\lambda^+}z\right)=\frac{\lambda^-}{\lambda^+}F(x,z).
\end{align*}
Differentiating $F$ in the $x$ direction yields
\begin{align*}
(\nabla_x F)(x,z) = (\nabla_x F)\left(\frac{\lambda^-}{\lambda^+}x, \frac{A}{\lambda^+}z\right)
\end{align*}
and repeated applications of the above formula shows
\begin{align*}
(\nabla_x) F(x,z) = (\nabla_x F)\left(\left(\frac{\lambda^-}{\lambda^+}\right)^nx, \left(\frac{A}{\lambda^+}\right)^nz\right)
\end{align*}
for all $n>0$. Taking the limit as $n$ goes to infinity proves that $(\nabla_x F)(x,z) = (\nabla_x F)(0,0)$. Since $(\nabla_x F)(0,0)=0$ we then see that $F(x,z) = F(0,z)$ for all $(x,z) \in V \times U$. 

Now for $(x, z) \in \Kb_P \times \Kb^{d-1}$ there exists $N >0$ so that $\varphi^N \cdot (x,z) \in V \times U$. Then we define 
\begin{align*}
F(x,z) := \left(\frac{\lambda^+}{\lambda^-}\right)^NF\left(\left(\frac{\lambda^-}{\lambda^+}\right)^Nx,\left(\frac{A}{\lambda^+}\right)^Nz \right).
\end{align*}
Notice that this definition does not depend on the choice of $N$, that is if $\varphi^M \cdot (x,z) \in V \times U$ then 
\begin{align*}
\left(\frac{\lambda^+}{\lambda^-}\right)^MF\left(\left(\frac{\lambda^-}{\lambda^+}\right)^Mx,\left(\frac{A}{\lambda^+}\right)^Mz \right)= \left(\frac{\lambda^+}{\lambda^-}\right)^NF\left(\left(\frac{\lambda^-}{\lambda^+}\right)^Nx,\left(\frac{A}{\lambda^+}\right)^Nz \right).
\end{align*}
So we see that $F$ extends to a  function defined on $\Kb_P \times \Kb^{d-1}$. Moreover, this function is clearly $C^1$. With this extension 
\begin{align*}
\cup_{n \in \Nb} \varphi^{-n} (\Oc \cap \Omega) = \{ (z_1, \dots, z_d) \in \Kb^d : \Real(z_1) >F(0, z_2, \dots, z_d) \}
\end{align*}
and $\cup_{n \in \Nb} \varphi^{-n} (\Oc \cap \Omega) = \Omega$ by Proposition~\ref{prop:bi_prox_str}. This proves the first part of the Theorem. 

Now assume that $\Kb$ is either $\Cb$ or $\Hb$. Then for $w \in \Kb_P$ define the projective map $u_w$ by $u_w \cdot (z_1, \dots, z_d) = (z_1+w, z_2, \dots, z_d)$. Since $F(x,z) = F(0,z)$, we see that $u_w \in \Aut_0(\Omega)$ for all $w \in \Kb_P$. Also $u_w$ corresponds to the matrix 
\begin{align*}
\begin{pmatrix}
1 & 0 \\ 
w & 1 
\end{pmatrix}
\end{align*}
in the action of $\SL_2(\Kb)$ defined in the statement of the theorem.

The same argument starting with $\varphi^{-1}$ instead of $\varphi$ (that is viewing $\Omega$ as a subset of the affine chart $\{ [z_1 : 1 : z_2 : \dots : z_d]\}$) shows that $\Aut_0(\Omega)$ contains the group of automorphisms corresponding to the matrices
\begin{align*}
\left\{ \begin{pmatrix}
1 & w \\ 
0 & 1 
\end{pmatrix} : w \in \Kb_P \right\}
\end{align*}
in the action of $\SL_2(\Kb)$ defined in the statement of the theorem.  

Finally these two one-parameter subgroups generate all of $\Aut_0( \{ z \in \Kb : \Real(z) > 0\} )$ (see Proposition~\ref{prop:auto_halfspace}) and thus the second part of the theorem is proven. 
\end{proof}

We end the section with three corollaries of Theorem~\ref{thm:blow_up}. If we consider the matrices 
\begin{align*}
h = 
\begin{pmatrix} 
e^t & 0 \\
0 & e^{-t}
\end{pmatrix}
\end{align*}
in the statement of Theorem~\ref{thm:blow_up} then we have the following:

\begin{corollary}
\label{cor:special_bi}
Suppose  $\Kb$ is either $\Cb$ or $\Hb$ and $\Omega$ is a proper domain with $C^1$ boundary. If $\varphi \in \Aut(\Omega)$ is bi-proximal, then there exists a one-parameter subgroup $\psi_t \in \SL_{d+1}(\Kb)$ of bi-proximal elements such that $[\psi_t] \in \Aut_0(\Omega)$ and
\begin{enumerate}
\item $(\psi_t)|_{x^+_{\varphi}} = e^t\Id|_{x^+_{\varphi}} $,
\item $(\psi_t)|_{x^-_{\varphi}} = e^{-t}\Id|_{x^-_{\varphi}} $,
\item $(\psi_t)|_{H^+ \cap H^-} = \Id|_{H^+ \cap H^-}$ where $H^{\pm} = T_{x^\pm_{\varphi}}^{\Kb} \partial \Omega$.
\end{enumerate}
\end{corollary}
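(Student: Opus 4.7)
The plan is to apply Theorem~\ref{thm:blow_up} directly to the one-parameter subgroup of diagonal elements of $\SL_2(\Kb)$ given by
\[
h_t = \begin{pmatrix} e^t & 0 \\ 0 & e^{-t} \end{pmatrix}.
\]
The corresponding projective action on $\Pb(\Kb^2)$ is (a conjugate of) scalar multiplication by $e^{2t}$, which clearly preserves the right half-space $\{z \in \Kb : \Real(z) > 0\}$ and lies in its identity-component automorphism group because $t \mapsto h_t$ is a continuous path through $h_0 = \Id$. So the hypothesis $[h_t] \in \Aut_0(\{z \in \Kb : \Real(z) > 0\})$ of Theorem~\ref{thm:blow_up} holds for every $t \in \Rb$.

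Since $\varphi$ is bi-proximal we are in the standard coordinates set up immediately before Theorem~\ref{thm:blow_up}, so that $x^+_\varphi = [1:0:\dots:0]$, $x^-_\varphi = [0:1:0:\dots:0]$, and $H^+ \cap H^- = \Spanset_{\Kb}(e_3, \dots, e_{d+1})$. The second half of Theorem~\ref{thm:blow_up} then supplies the projective automorphism
\[
\psi_t := \psi_{h_t} \cdot [z_1 : z_2 : z_3 : \dots : z_{d+1}] = [e^t z_1 : e^{-t} z_2 : z_3 : \dots : z_{d+1}],
\]
so that $\psi_t$ is represented by $\operatorname{diag}(e^t, e^{-t}, 1, \dots, 1) \in \SL_{d+1}(\Kb)$ and $[\psi_t] \in \Aut_0(\Omega)$. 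That $\{\psi_t\}$ is a one-parameter subgroup follows from the fact that $h_{s+t} = h_s h_t$ in $\SL_2(\Kb)$ and the assignment $h \mapsto \psi_h$ in Theorem~\ref{thm:blow_up} is a group homomorphism.

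Properties (1), (2), (3) are then just the direct reading of the diagonal matrix on the three $\varphi$-invariant subspaces $\Kb e_1 = x^+_\varphi$, $\Kb e_2 = x^-_\varphi$, and $\Spanset_{\Kb}(e_3, \dots, e_{d+1}) = H^+ \cap H^-$. Bi-proximality of $\psi_t$ for $t \ne 0$ is immediate from its eigenvalue list: the largest absolute eigenvalue is $e^{|t|}$ (attained on one of $x^\pm_\varphi$) and the smallest is $e^{-|t|}$ (attained on the other), both strictly separated from the middle eigenvalue $1$.

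There is essentially no technical obstacle: the work has already been done in Theorem~\ref{thm:blow_up}, which produces a full copy of (a quotient of) $\SL_2(\Kb)$ inside $\Aut_0(\Omega)$ acting by the stated matrix formula on the first two homogeneous coordinates. The only point that deserves mention is that the normalization used is precisely the one adapted to the given bi-proximal $\varphi$, so that the subgroup $\{\psi_t\}$ produced above is canonically tied to the pair $(x^+_\varphi, x^-_\varphi)$, as required by the statement of the corollary.
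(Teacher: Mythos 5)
Your proposal is correct and is exactly the paper's argument: the corollary is obtained by feeding the diagonal one-parameter subgroup $h_t = \begin{pmatrix} e^t & 0 \\ 0 & e^{-t}\end{pmatrix}$ into the second half of Theorem~\ref{thm:blow_up}, in the coordinates adapted to the bi-proximal element $\varphi$. Your added verifications (that $[h_t]$ lies in $\Aut_0(\{\Real(z)>0\})$, that $h\mapsto\psi_h$ is a homomorphism, and that the eigenvalue list $e^t, e^{-t}, 1,\dots,1$ gives bi-proximality for $t\neq 0$) are all sound and merely make explicit what the paper leaves implicit.
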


Proposition~\ref{prop:auto_halfspace} and Theorem~\ref{thm:blow_up} also imply the following:

\begin{corollary}
\label{cor:bi_3}
Suppose $\Kb$ is either $\Cb$ or $\Hb$, $\Omega$ is a proper domain with $C^1$ boundary, $\varphi \in \Aut(\Omega)$ is bi-proximal, and $L$ is the complex projective line containing $x^+_{\varphi}$ and $x^-_{\varphi}$. Then for all $x,y \in L \cap \partial\Omega$ there exists $\varphi_{xy} \in \Aut_0(\Omega)$ such that $\varphi_{xy}(x)=y$.
\end{corollary}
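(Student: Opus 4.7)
The plan is to reduce the claim to the transitivity of $\Aut_0(\{z \in \Kb : \Real(z) > 0\})$ on the boundary of the half-plane in $\Pb(\Kb^2)$ via the embedding into $\Aut_0(\Omega)$ produced by Theorem~\ref{thm:blow_up}.

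First, I adopt the coordinates from the statement of Theorem~\ref{thm:blow_up}, so that $x^+_\varphi = [1:0:\cdots:0]$ and $x^-_\varphi = [0:1:0:\cdots:0]$. Then $L = \{[z_1:z_2:0:\cdots:0]\}$ identifies naturally with $\Pb(\Kb^2)$ via $[z_1:z_2:0:\cdots:0] \leftrightarrow [z_1:z_2]$. In the affine chart $[1:Z_1:\cdots:Z_d]$ containing $\Omega$, $L$ becomes the $Z_1$-axis together with the point at infinity $x^-_\varphi$. Since $x^+_\varphi \in \partial\Omega$ corresponds to the origin of this affine chart, the description $\Omega = \{(Z_1,\ldots,Z_d) : \Real(Z_1) > F(0,Z_2,\ldots,Z_d)\}$ forces $F(0,0,\ldots,0)=0$. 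Consequently, under the identification $L \cong \Pb(\Kb^2)$, the intersection $L \cap \Omega$ corresponds to the right half-plane $\{z \in \Kb : \Real(z) > 0\}$, and $L \cap \partial\Omega$ corresponds to its boundary $\Kb_P \cup \{[0:1]\}$.

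Next, I invoke the second half of Theorem~\ref{thm:blow_up}: for every $h \in \SL_2(\Kb)$ with $[h] \in \Aut_0(\{z \in \Kb : \Real(z) > 0\})$, the map $\psi_h$ lies in $\Aut_0(\Omega)$. From the explicit formula $\psi_h \cdot [z_1:z_2:z_3:\cdots] = [az_1+bz_2:cz_1+dz_2:z_3:\cdots]$, each $\psi_h$ preserves $L$ and restricts on $L$ to the standard $\SL_2(\Kb)$-action of $h$ on $\Pb(\Kb^2)$. Thus $\{\psi_h : [h] \in \Aut_0(\{z \in \Kb : \Real(z) > 0\})\}$ is a subgroup of $\Aut_0(\Omega)$ whose restriction to $L$ realizes the full group $\Aut_0(\{z \in \Kb : \Real(z) > 0\})$ acting on $\Pb(\Kb^2)$ in its standard way.

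Finally, I verify that $\Aut_0(\{z \in \Kb : \Real(z) > 0\})$ acts transitively on $\Kb_P \cup \{[0:1]\}$. By Proposition~\ref{prop:auto_halfspace}, this group is generated by the one-parameter subgroups arising from the matrices $\begin{pmatrix} 1 & w \\ 0 & 1 \end{pmatrix}$ and $\begin{pmatrix} 1 & 0 \\ w & 1 \end{pmatrix}$ with $w \in \Kb_P$. The first acts as $[1:v] \mapsto [1:v+w]$, which is transitive on $\Kb_P$ while fixing $[0:1]$; the second sends $[0:1]$ to $[1:w]$, moving $[0:1]$ into $\Kb_P$ whenever $w \neq 0$. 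Composing these gives transitivity on $\Kb_P \cup \{[0:1]\}$, so given $x,y \in L \cap \partial\Omega$ I can choose $h$ with $[h]\cdot x = y$ and set $\varphi_{xy} := \psi_h$. The only step requiring more than bookkeeping is the identification $L \cap \Omega \cong \{z \in \Kb : \Real(z) > 0\}$, which hinges on the fact that $F(0,\ldots,0)=0$ since $x^+_\varphi \in \partial\Omega$; all the real work is absorbed into Theorem~\ref{thm:blow_up} and Proposition~\ref{prop:auto_halfspace}.
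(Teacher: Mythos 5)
Your argument is correct and is precisely the one the paper intends: Corollary~\ref{cor:bi_3} is stated as an immediate consequence of Theorem~\ref{thm:blow_up} and Proposition~\ref{prop:auto_halfspace}, and you supply exactly the missing bookkeeping (the identification of $L \cap \Omega$ with the half-plane, which rests on $F(0,\dots,0)=0$, and the transitivity of $\Aut_0(\Hc_+)$ on its boundary, which is already part (2) of Proposition~\ref{prop:auto_halfspace}). The only slip is cosmetic: with the paper's formula $\psi_h\cdot[z_1:z_2:\cdots]=[az_1+bz_2:cz_1+dz_2:\cdots]$ and the chart $[1:Z_1:\cdots:Z_d]$, it is the lower-triangular matrix $\left(\begin{smallmatrix}1&0\\ w&1\end{smallmatrix}\right)$ that acts by $[1:v]\mapsto[1:v+w]$ and fixes $[0:1]$, while $\left(\begin{smallmatrix}1&w\\ 0&1\end{smallmatrix}\right)$ is the one moving $[0:1]$ into the affine part of the boundary --- you have the two roles swapped, which does not affect the conclusion since both subgroups are present and generate a group acting transitively on $L\cap\partial\Omega$.
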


Finally Theorem~\ref{thm:blow_up} also implies the following:

\begin{corollary}
\label{cor:different_endpoints}
Suppose $\Kb$ is either $\Cb$ or $\Hb$, $\Omega$ is a proper domain with $C^1$ boundary, and $\Aut(\Omega)$ contains a bi-proximal element. If $x_1,\dots, x_n \in \partial \Omega$ then there exists a bi-proximal element $\varphi \in \Aut(\Omega)$ so that 
\begin{align*}
\{x_1,\dots, x_n\} \cap\{x^+_{\varphi}, x^-_{\varphi}\} = \emptyset.
\end{align*}
\end{corollary}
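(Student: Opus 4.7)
The plan is to produce the desired bi-proximal element as a conjugate $\psi_h \varphi_0 \psi_h^{-1}$ of a given bi-proximal element $\varphi_0 \in \Aut(\Omega)$, where $\psi_h$ is chosen from the $\SL_2(\Kb)$-action supplied by Theorem~\ref{thm:blow_up}. Let $L$ be the projective line through $x_{\varphi_0}^+$ and $x_{\varphi_0}^-$. In the affine coordinates arranged in Theorem~\ref{thm:blow_up}, $L$ becomes $\Kb \cup \{\infty\}$ with $x_{\varphi_0}^+ \leftrightarrow 0$ and $x_{\varphi_0}^- \leftrightarrow \infty$, and the subgroup $\{\psi_h : h \in \Aut_0(\{z \in \Kb : \Real(z) > 0\})\}$ of $\Aut_0(\Omega)$ preserves $L$ and acts on it by the fractional linear action of $\Aut_0(\{z \in \Kb : \Real(z) > 0\})$ on $\Kb \cup \{\infty\}$.

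First I would note that, since conjugation preserves bi-proximality, $\psi_h \varphi_0 \psi_h^{-1}$ is automatically bi-proximal with fixed points $\psi_h(x_{\varphi_0}^+)$ and $\psi_h(x_{\varphi_0}^-)$, and both of these lie in $L \cap \partial\Omega$. By the explicit description of $\Omega$ in Theorem~\ref{thm:blow_up}, $L \cap \partial\Omega$ corresponds precisely to the boundary $\Kb_P \cup \{\infty\}$ of the half-space $\{z \in \Kb : \Real(z) > 0\}$, which is uncountable (a circle when $\Kb = \Cb$, a $3$-sphere when $\Kb = \Hb$). In particular, any $x_i$ not lying on $L$ is automatically avoided by $\{\psi_h(x_{\varphi_0}^+),\psi_h(x_{\varphi_0}^-)\}$, regardless of $h$.

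Next I would reduce the problem to the finite set $S := \{x_1,\dots,x_n\} \cap L$, viewed inside $\Kb_P \cup \{\infty\}$. For each $s \in S$, the conditions $h \cdot 0 = s$ and $h \cdot \infty = s$ each cut out a single coset of the stabilizer of $s$ in $\Aut_0(\{z \in \Kb : \Real(z) > 0\})$, hence a positive-codimension closed submanifold. Since $\Aut_0(\{z \in \Kb : \Real(z) > 0\})$ is a connected Lie group of dimension at least $3$, and there are only $2|S| \leq 2n$ such forbidden submanifolds, their union is a proper subset. I can therefore choose $h$ outside this union and set $\varphi := \psi_h \varphi_0 \psi_h^{-1}$.

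The main technical step is confirming that the action of $\Aut_0(\{z \in \Kb : \Real(z) > 0\})$ on $\Kb_P \cup \{\infty\}$ has large enough orbits that finitely many points can be avoided simultaneously. This is essentially the transitivity statement already contained in Corollary~\ref{cor:bi_3}, and can also be seen directly using the explicit translations $u_w$ for $w \in \Kb_P$ together with the dilations from Corollary~\ref{cor:special_bi}, whose combined orbit through $0$ is already all of $\Kb_P \cup \{\infty\}$. Once this richness of the action is in hand, the remainder is a routine dimension-counting argument with no further obstacles.
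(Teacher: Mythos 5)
Your argument is correct and is essentially the derivation the paper intends: the corollary is stated as an immediate consequence of Theorem~\ref{thm:blow_up}, and conjugating a fixed bi-proximal element by a generic $\psi_h$ from the $\SL_2(\Kb)$-action (so that both fixed points move within the uncountable sphere $L\cap\partial\Omega$ and off the finite set $\{x_1,\dots,x_n\}$) is exactly the mechanism that theorem provides. The reduction to points of $\{x_1,\dots,x_n\}$ lying on $L$ and the dimension/coset count in $\Aut_0(\{z\in\Kb:\Real(z)>0\})$ are both sound.
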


\section{Proof of Theorem~\ref{thm:C2}}

\subsection{The real case} First suppose that $\Kb = \Rb$. Then using Theorem~\ref{thm:bi_prox_exist} and Theorem~\ref{thm:blow_up} we can find an bi-proximal element $\varphi \in \Aut(\Omega)$ and make a change of variable so that the affine chart $\Rb^d=\{ [1:x_1: \dots : x_d] : x_1, \dots, x_d \in \Rb\}$ contains $\Omega$, $0 \in \partial \Omega$, $T_0 \partial \Omega = \{0\} \times \Rb^{d-1}$,  
\begin{align*}
\Omega = \{ (x_1, \dots, x_d) \in \Rb^d : x_1 > F(x_2, \dots, x_d) \}
\end{align*}
for some $C^2$ function $F: \Rb^{d-1} \rightarrow \Rb$, and $\varphi$ has attracting and repelling fixed points $x^+ = [1:0:\dots :0]$ and $x^-=[0:1:0:\dots:0]$. Notice that $F(0) =0$ and $F(x) > 0$ for all $x \in \Rb^{d-1} \setminus \{0\}$. 

Now with respect to these coordinates $\varphi$ is represented by a  matrix of the form 
\begin{align*}
\begin{pmatrix} 
\lambda & & \\ 
 & \frac{1}{\lambda} & \\
& & A \end{pmatrix} \in \GL_{d+1}(\Rb)
\end{align*}
with $\lambda > 1$. And so
\begin{align*}
\lambda^{2n} F\left(\frac{1}{\lambda^n} A^nx \right) = F(x)
\end{align*}
for all $x \in \Rb^{d-1}$ and $n \in \Nb$. 

We first claim that up to a change of coordinates $A \in O(d-1)$. Since $F$ is $C^2$ there exists $C >0$ so that $F(x) \leq C\norm{x}^2$ for all $x$ sufficiently close to $0$. Thus for $n$ large enough 
\begin{align*}
F(x) \leq \lambda^{2n}\norm{ \frac{A^n}{\lambda^n} x}^2 = \norm{A^n x}^2.
\end{align*}
Since $F$ is positive on $\Rb^{d-1} \setminus\{0\}$ this implies that 
\begin{align*}
\inf_{n \in  \Nb} \inf_ {\norm{x}=1} \norm{A^n x} > 0.
\end{align*}
Applying the same argument to $\varphi^{-1}$ shows that 
\begin{align*}
\inf_{n \in  \Nb} \inf_ {\norm{x}=1} \norm{A^{-n} x} > 0.
\end{align*}
Thus $\norm{A^n}$ and $\norm{A^{-n}}$ are both bounded. Thus $\{ A^n : n \in \Zb\} \leq \GL_{d-1}(\Rb)$ is a bounded group. Thus, up to a change of coordinates, $A \in O(d-1)$. 

Now we can fix $n_k \rightarrow \infty$ so that $A^{n_k} \rightarrow \Id_{d-1}$. Then for $x \in \Rb^{d-1}$
\begin{align*}
F(x) = \lim_{k \rightarrow \infty} \lambda^{2n_k} F\left(\frac{1}{\lambda^{n_k}} A^{n_k}x \right) =  \frac{1}{2} \mathrm{Hess}(F)_{0}(x,x)
\end{align*}
since $F$ is $C^2$. Since $F(x) > 0$ for all non-zero $x$ we then see that $\mathrm{Hess}(F)_{0}$ is positive definite and hence up to a change of basis we see that 
\begin{align*}
F(x_2, \dots, x_d) = \frac{1}{2} \sum_{i=2}^{d} \abs{x_i}^2.
\end{align*}

\subsection{The complex and quaternionic case}
First, using Theorem~\ref{thm:bi_prox_exist} and Corollary~\ref{cor:special_bi}, we can change coordinates so that the affine chart 
\begin{align*}
\Kb^d=\{ [1:z_1 : \dots : z_d] : z_1, \dots, z_d \in \Kb\}
\end{align*} 
contains $\Omega$, $0 \in \partial \Omega$, $T_0 \partial \Omega = \Kb_P \times \Kb^{d-1}$, and
\begin{align*}
\Omega = \{ (z_1, \dots, z_d) \in \Cb^d : \Real(z_1) > F(z_2, \dots, z_d) \}
\end{align*}
for some $C^2$ function $F: \Cb^{d-1} \rightarrow \Rb$. Notice that $F(0) =0$ and $F(z) > 0$ for all $z \in \Kb^{d-1} \setminus \{0\}$. 

We also can assume that $\Aut_0(\Omega)$ contains the transformation
\begin{align*}
[z_1, \dots, z_d] \rightarrow [ az_1 + bz_2 : cz_1 + d z_2 : z_3 : \dots : z_{d+1}]
\end{align*}
when 
\begin{align*}
h = \begin{pmatrix} a & b \\ c & d \end{pmatrix} \in\SL_2(\Kb)
\end{align*}
and $[h] \in \Aut_0( \{ z \in \Kb : \Real(z) > 0\} )$.

We claim that
\begin{align}
\label{eq:goal}
F(z) = \frac{1}{\abs{w}^2} F(wz)
\end{align}
for $w \in \Kb \setminus \{0\}$. First since the transformation 
\begin{align*}
a_t \cdot [z_1, \dots, z_d] = [ e^{-t} z_1 : e^{t} z_2 : z_3 : \dots : z_{d+1}]
\end{align*}
acts on the affine chart $\Kb^d$ by 
\begin{align*}
a_t \cdot (z_1, \dots, z_d) = (e^{2t} z_1, e^t z_2, \dots, e^t z_d)
\end{align*}
we see that 
\begin{align}
\label{eq:goal_1}
F(z) = \frac{1}{e^{2t}} F(e^tz).
\end{align}
Next if $w \in \Kb_P$ the transformation 
\begin{align*}
u_w \cdot [z_1, \dots, z_d] = [ z_1+wz_2 : z_2 : z_3 : \dots : z_{d+1}]
\end{align*}
is in $\Aut(\Omega)$ and acts on the affine chart $\Kb^d$ by 
\begin{align*}
u_w \cdot (z_1, z_2, \dots, z_d) = \left( \frac{z_1}{1+wz_1}, \frac{z_2}{1+wz_1}, \dots, \frac{z_d}{1+wz_1}\right).
\end{align*}
Notice that
\begin{align*}
\Real \left(  \frac{z_1}{1+wz_1} \right)  = \frac{1}{\abs{1+wz_1}^2} \Real \left( z_1(1-\overline{z}_1w )\right) = \frac{\Real(z_1)}{\abs{1+wz_1}^2}
\end{align*}
so if we apply $u_{w/F(z)}$ to the point $(F(z), z) \in \partial \Omega$ we see that
\begin{align}
\label{eq:goal_2}
F(z) = \abs{1+w}^2 F\left( \frac{z}{1+w}\right)
\end{align}
for all $w \in \Kb_P$. Combining Equations~\ref{eq:goal_1} and~\ref{eq:goal_2} we see that Equation~\ref{eq:goal} holds for all $w \in \Kb$ with $\Real(w) > 0$. On the other hand, any $w \in \Kb \setminus \{0\}$ can be written as $z=w_1w_2$ where $\Real(w_1), \Real(w_2) > 0$. So Equation~\ref{eq:goal} holds for all $w \in \Kb \setminus \{0\}$.

Now since $F$ is $C^2$ and $F(z) = e^{2t} F(e^{-t}z)$ we see that 
\begin{align*}
F(z) = \frac{1}{2} \mathrm{Hess}(F)_{0}  ( z, z)
\end{align*}
for all $z \in \Kb^{d-1}$. Since $T_0^{\Cb} \partial \Omega \cap \partial \Omega = \{0\}$ and $\{0\} \times \Cb^{d-1}$ we have $F(z) >0$ for all $z \in \Cb^{d-1}$ and so the Hessian of $F$ is positive definite. 

Now let $r = \dimension_{\Rb} \Kb$ and identify $\Kb^{d-1}$ with $\Rb^{r(d-1)}$ in the obvious way. For $w \in \Kb$ let $M(w) \in \GL_{r(d-1)}(\Rb)$ denote the action by scalar multiplication by $w$. Notice that $^tM(w) = M(\overline{w})$. Now under this identification there exists a matrix $A \in \GL_{r(d-1)}(\Rb)$ so that 
 \begin{align*}
 \mathrm{Hess}(F)_{0}  ( z_1, z_2) = ^tz_1 A z_2.
 \end{align*}
Since $F(wz) = \abs{w}^2 F(z)$ for $w \in \Kb$ we see that 
\begin{align*}
M(\overline{w}) A M(w) = \abs{w}^2 A.
\end{align*}
So 
\begin{align*}
A M(w) = \abs{w}^2M(\overline{w})^{-1} A.
\end{align*}
But $\abs{w}^2M(\overline{w})^{-1} = M(w)$. Thus $A$ is $\Kb$-linear. Hence $A$ can be viewed as a matrix in $\GL_{d-1}(\Kb)$. Now since $^tA = A$ as a matrix in $\GL_{r(d-1)}(\Rb)$ we see that $^t\overline{A} = A$ as a matrix in $\GL_{d-1}(\Kb)$. Moreover, $A$ is positive semidefinite. Thus there exists $g \in \GL_{d-1}(\Kb)$ so that 
\begin{align*}
^t\overline{g}Ag = \Id_{d-1}.
\end{align*}
Thus, up to a change of coordinates, 
\begin{align*}
F(z_2, \dots, z_d) = \sum_{i=2}^d \abs{z_i}^2
\end{align*}
and 
\begin{align*}
\Omega = \left\{ (z_1, \dots, z_d) \in \Kb^d : \Real(z_1) > \sum_{i=2}^d \abs{z_i}^2 \right\}.
\end{align*}

\section{The structure of the limit set}

\begin{proposition}\label{prop:limit_set}
Suppose $\Kb$ is either $\Cb$ or $\Hb$ and $\Omega \subset\Pb(\Kb^{d+1})$ is a proper domain with $C^1$ boundary. If there exists $x,y \in \Lc(\Omega)$ such that $T_x^{\Kb} \partial \Omega \neq T_y^{\Kb} \partial \Omega$ then the limit set $\Lc(\Omega) \subset \Pb(\Kb^{d+1})$ is a closed $C^\infty$ submanifold of $\Pb(\Kb^{d+1})$ and $\Aut_0(\Omega)$ acts transitively on $\Lc(\Omega)$.
\end{proposition}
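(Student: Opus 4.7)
The plan is to first prove that $\Aut_0(\Omega)$ acts transitively on $\Lc(\Omega)$; the smooth submanifold statement then follows from general Lie theory, since $\Aut(\Omega)$ is a closed subgroup of $\PGL_{d+1}(\Kb)$ by Proposition~\ref{prop:closed}, hence a real Lie group acting real-analytically on $\Pb(\Kb^{d+1})$, so any orbit is a real-analytic immersed submanifold and closedness of $\Lc(\Omega)$ in $\partial \Omega$ upgrades it to an embedded $C^\infty$ submanifold. To set things up, I apply Theorem~\ref{thm:bi_prox_exist} (justified by the tangent-hyperplane hypothesis) and Corollary~\ref{cor:special_bi} to obtain a bi-proximal $\varphi \in \Aut_0(\Omega)$ with fixed points $x^\pm_\varphi$; let $L_\varphi$ be the $\Kb$-line through them. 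Define $X := \Aut_0(\Omega) \cdot x^+_\varphi \subseteq \Lc(\Omega)$; by Corollary~\ref{cor:bi_3}, $L_\varphi \cap \partial \Omega \subseteq X$, and the goal becomes showing $X = \Lc(\Omega)$.

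The main step is to establish that attracting fixed points of bi-proximal elements of $\Aut(\Omega)$ are dense in $\Lc(\Omega)$. Given $p \in \Lc(\Omega)$, pick $\phi_n \in \Aut(\Omega)$ and $q \in \Omega$ with $\phi_n q \to p$ and, after subsequence, $\phi_n^{-1} q \to p^-$. If $T_p^{\Kb} \partial \Omega \neq T_{p^-}^{\Kb} \partial \Omega$, then Lemma~\ref{lem:duality} directly makes $\phi_n$ bi-proximal with $x^+_{\phi_n} \to p$. Otherwise the hypothesis furnishes $r \in \Lc(\Omega)$ with $T_r^{\Kb} \partial\Omega \neq T_p^{\Kb}\partial \Omega$ and an approach sequence $\rho_m$ for $r$; after possibly perturbing $r$ so that its dual tangent hyperplane is also distinct from $T_p^{\Kb} \partial \Omega$, Lemma~\ref{lem:generic_endpoints} applied to $(\phi_n, \rho_n)$ produces bi-proximal $\gamma_n := \phi_n \rho_n^{-1}$ with $x^+_{\gamma_n} \to p$. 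Next I would show each $x^+_\gamma$ (for $\gamma \in \Aut(\Omega)$ bi-proximal) lies in $X$ by chaining: use Corollary~\ref{cor:different_endpoints} to produce bi-proximal auxiliaries whose $\Kb$-lines form a chain from $L_\varphi$ to $L_\gamma$ with consecutive lines sharing a point of $\partial \Omega$, and apply Corollary~\ref{cor:bi_3} along each link to transport $x^+_\varphi$ to $x^+_\gamma$ through $\Aut_0(\Omega)$. Together these give $\Lc(\Omega) \subseteq \overline{X}$.

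To conclude $X = \Lc(\Omega)$, I need $X$ to be closed in $\partial \Omega$: if $g_n \cdot x^+_\varphi \to p$, then the local normal form of $\partial \Omega$ near $x^+_\varphi$ from Theorem~\ref{thm:blow_up}, together with properness of the $\Aut_0(\Omega)$ action on $\Omega$ (Proposition~\ref{prop:closed}), allows one to modify $g_n$ by elements of the embedded $\PSL_2(\Kb)$ (acting transitively on $L_\varphi \cap \partial \Omega$) so that a subsequence converges in $\Aut_0(\Omega)$, placing $p \in X$. The hardest step will be the chaining argument of the previous paragraph: producing bi-proximal auxiliaries whose lines actually link $L_\varphi$ and $L_\gamma$ via shared boundary intersections requires carefully combining the flexibility of Corollary~\ref{cor:different_endpoints} with the transitive $\PSL_2(\Kb)$-actions furnished by Theorem~\ref{thm:blow_up}, and amounts to showing that the family of bi-proximal fixed-point pairs is rich enough to connect any two prescribed lines through $\partial \Omega$.
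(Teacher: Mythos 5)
Your high-level architecture (transitivity of $\Aut_0(\Omega)$ first, then Lemma~\ref{lem:smth_orbits} for smoothness) matches the paper, but the two steps you yourself flag as hard are exactly where the argument has genuine gaps, and the paper resolves them by a different mechanism that you are missing. The key tool you lack is the paper's Lemma~\ref{lem:att_fix_closed}: if bi-proximal elements $\varphi_n$ have $x^+_{\varphi_n}\to x^+$ and $x^-_{\varphi_n}\to x^-$ with $T^{\Kb}_{x^+}\partial\Omega\neq T^{\Kb}_{x^-}\partial\Omega$, then after normalizing each $\varphi_n$ via Corollary~\ref{cor:special_bi} (eigenvalue $2$ on $x^+_{\varphi_n}$, eigenvalue $1/2$ on $x^-_{\varphi_n}$, identity on $H_n^+\cap H_n^-$) the sequence converges in $\SL_{d+1}(\Kb)$ to a bi-proximal $\varphi\in\Aut(\Omega)$ with $x^\pm_\varphi=x^\pm$. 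With this, one proves (Lemma~\ref{lem:bi_prox_between}) that for \emph{any} two distinct points $x^+,y^+\in\Lc(\Omega)$ there is a single bi-proximal $\varphi$ with $x^+=x^+_\varphi$ and $y^+=x^-_\varphi$, and Corollary~\ref{cor:bi_3} then transports $x^+$ to $y^+$ inside $\Aut_0(\Omega)$ in one step. This eliminates entirely your ``chaining'' of $\Kb$-lines from $L_\varphi$ to $L_\gamma$ through shared boundary points, which you never actually construct and for which no mechanism is offered; producing consecutive bi-proximal lines that genuinely intersect $\partial\Omega$ in a common point is not something Corollary~\ref{cor:different_endpoints} gives you. (There is also a smaller gap in your density step: ``perturbing $r$ so that its dual tangent hyperplane is also distinct from $T^{\Kb}_p\partial\Omega$'' needs the fact, proved in the paper via Corollary~\ref{cor:different_endpoints} and Proposition~\ref{prop:bi_prox_str}, that one can find a bi-proximal $\gamma$ whose fixed points, and hence whose tangent hyperplanes, avoid a prescribed finite set, using that $T^{\Kb}_{x^\pm_\gamma}\partial\Omega\cap\partial\Omega=\{x^\pm_\gamma\}$.)

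The second gap is your closedness argument. Because your orbit $X$ is a priori only dense in $\Lc(\Omega)$, you must prove $X$ is closed, and your proposed renormalization of $g_n$ by the embedded $\PSL_2(\Kb)$ using properness is not justified: properness is an assertion about the action on $\Omega$, and convergence of $g_n\cdot x^+_\varphi$ on the boundary does not by itself yield a subsequence of the (modified) $g_n$ converging in the group. The paper sidesteps this entirely: it shows directly that $\Lc(\Omega)$ is closed by a short diagonal argument (using part (1) of Proposition~\ref{prop:limits} to replace the basepoints $p_n$ by a fixed $p$), and then, since the transitivity argument shows the orbit is \emph{all} of $\Lc(\Omega)$, closedness of the orbit is automatic and Lemma~\ref{lem:smth_orbits} applies. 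I would recommend restructuring your proof along these lines: prove the analogue of Lemma~\ref{lem:att_fix_closed} first, upgrade your density statement to the exact statement of Lemma~\ref{lem:bi_prox_between}, and prove closedness for $\Lc(\Omega)$ rather than for the orbit.
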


The fact that $\Lc(\Omega)$ is a $C^\infty$ submanifold of $\Pb(\Kb^{d+1})$ will follow from a general fact about the orbits of Lie groups:

\begin{lemma}\label{lem:smth_orbits}
Suppose $G$ is a connected Lie group acting smoothly on a smooth manifold $M$. Then an orbit $G\cdot m$ is an embedded smooth submanifold of $M$ if and only if $G \cdot m$ is locally closed in $M$. 
\end{lemma}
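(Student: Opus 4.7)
The forward direction is immediate: every embedded submanifold of a smooth manifold is locally closed in the ambient manifold. All of the content is in the converse, which I sketch below. Suppose $G\cdot m$ is locally closed in $M$. The stabilizer $G_m = \{g \in G : g\cdot m = m\}$ is closed in $G$ (as the preimage of $\{m\}$ under the continuous orbit map $g \mapsto g\cdot m$), so by Cartan's closed subgroup theorem $G_m$ is a Lie subgroup and $G/G_m$ is a smooth manifold. The orbit map descends to a smooth injective immersion $\bar\iota : G/G_m \to M$ with image $G\cdot m$, and the plan is to show that $\bar\iota$ is a topological embedding, whence $G\cdot m$ inherits the structure of an embedded smooth submanifold from $G/G_m$.

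Since $\bar\iota$ is a continuous bijection onto $G\cdot m$, it suffices to prove it is open onto its image. By $G$-equivariance of $\bar\iota$ and the fact that $G$ acts by homeomorphisms on $G\cdot m$ in the subspace topology, this reduces to the following local claim: for every open neighborhood $U$ of the identity coset $eG_m \in G/G_m$, the set $U\cdot m$ contains a neighborhood of $m$ in the subspace topology on $G\cdot m$.

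For this claim I would use a Baire category argument. Because $G\cdot m$ is locally closed in the locally compact Hausdorff space $M$, it is itself locally compact Hausdorff in the subspace topology, hence Baire. Choose a symmetric compact neighborhood $V$ of $eG_m$ in $G/G_m$ with $V\cdot V \subset U$ (construct $V$ by lifting to $G$ and using continuity of multiplication). Since $G$ is a connected Lie group it is $\sigma$-compact, so $G/G_m$ is covered by countably many translates $\{g_n V\}_{n \geq 1}$, and therefore $G\cdot m = \bigcup_n g_n \cdot (V\cdot m)$ is a countable union of compact subsets of $G\cdot m$. By Baire, some $g_n \cdot (V\cdot m)$ has nonempty interior in $G\cdot m$; translating by the homeomorphism $g_n^{-1}$ produces an open set $W \subset V\cdot m$ in $G\cdot m$. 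Picking $g_0 \in V$ with $g_0\cdot m \in W$, the translate $g_0^{-1} \cdot W$ is an open neighborhood of $m$ in $G\cdot m$ contained in $V\cdot V \cdot m \subset U\cdot m$, establishing the local claim.

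The main obstacle is precisely this Baire step: one must track the distinction between the quotient manifold topology on $G/G_m$, under which $\bar\iota$ is a priori only an immersion, and the (possibly coarser) subspace topology on $G\cdot m$ coming from $M$. The local closedness hypothesis is used exactly to make $G\cdot m$ locally compact Hausdorff, which is what allows Baire category to force the two topologies to coincide.
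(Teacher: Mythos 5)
Your argument is correct: the forward direction is the standard fact that embedded submanifolds are locally closed, and your converse via the injective immersion $G/G_m \to M$ plus the Baire-category argument (using local closedness to make the orbit locally compact Hausdorff, $\sigma$-compactness of $G$, and translation by the $G$-action) is precisely the standard proof; the paper itself gives no proof but defers to \cite[Theorem 15.3.7]{tD2008}, which proceeds along the same lines. The only point you gloss over --- interpreting a ``symmetric'' neighborhood with $V\cdot V\subset U$ inside the non-group $G/G_m$ --- is handled exactly as you indicate, by lifting to a compact symmetric neighborhood of $e$ in $G$, so there is no gap.
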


Here smooth mean $C^\infty$ and for a proof see~\cite[Theorem 15.3.7]{tD2008}. 

\begin{lemma}\label{lem:att_fix_closed}
Suppose $\Kb$ is either $\Cb$ or $\Hb$ and $\Omega \subset\Pb(\Kb^{d+1})$ is a proper domain with $C^1$ boundary. If $x^+,x^- \in \partial \Omega$, $T_{x^+}^{\Kb} \partial \Omega \neq T_{x^-}^{\Kb} \partial \Omega$, and there exists bi-proximal elements $\varphi_n \in \Aut(\Omega)$ so that 
\begin{align*}
x^+_{\varphi_n} \rightarrow x^+ \text{ and } x^-_{\varphi_n} \rightarrow x^-
\end{align*}
then there exists $\varphi \in \Aut(\Omega)$ bi-proximal so that $x^+ = x^+_{\varphi}$ and $x^- = x^-_{\varphi}$. 
\end{lemma}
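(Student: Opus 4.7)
The plan is to apply Corollary~\ref{cor:special_bi} to each $\varphi_n$, obtaining a one-parameter subgroup $\psi_t^{(n)} \in \Aut_0(\Omega)$ of bi-proximal elements with eigenvalues $(e^t, e^{-t}, 1, \dots, 1)$ and eigenspaces $(x^+_{\varphi_n}, x^-_{\varphi_n}, H^+_n \cap H^-_n)$, where $H^\pm_n := T^{\Kb}_{x^\pm_{\varphi_n}}\partial\Omega$. I then aim to extract, along a subsequence, a limit $\psi \in \PGL_{d+1}(\Kb)$ of $\psi_1^{(n)}$; the closedness of $\Aut(\Omega)$ (Proposition~\ref{prop:closed}) will automatically place $\psi$ in $\Aut(\Omega)$, and its eigenstructure will make it bi-proximal with $x^+_\psi = x^+$ and $x^-_\psi = x^-$. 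Since $\partial\Omega$ is $C^1$ and $x^\pm_{\varphi_n} \to x^\pm$, the tangent hyperplanes satisfy $H^\pm_n \to H^\pm := T^{\Kb}_{x^\pm}\partial\Omega$, and since $H^+ \neq H^-$ by hypothesis, $H^+_n \cap H^-_n \to H^+ \cap H^-$, of $\Kb$-dimension $d-1$.

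Convergence of $\psi_1^{(n)}$ in $\PGL_{d+1}(\Kb)$ is equivalent to the three limiting subspaces forming a direct sum $\Kb^{d+1} = x^+ \oplus x^- \oplus (H^+ \cap H^-)$, equivalently $x^\pm \not\subset H^\mp$. Granted this, bases adapted to the splittings $(x^+_{\varphi_n}, x^-_{\varphi_n}, H^+_n \cap H^-_n)$ can be chosen to converge to a basis adapted to $(x^+, x^-, H^+ \cap H^-)$; in these bases $\psi_1^{(n)}$ is the constant matrix $\mathrm{diag}(e, e^{-1}, 1, \dots, 1)$ for every $n$, and the limit $\psi$ has the same eigenvalues on the eigenspaces $(x^+, x^-, H^+ \cap H^-)$, yielding the desired bi-proximal element.

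The main obstacle is to rule out $\psi_1^{(n)} \to \infty$ in $\PGL_{d+1}(\Kb)$. Suppose this occurs along a subsequence. By Proposition~\ref{prop:closed} the orbits escape every compact subset of $\Omega$, so for fixed $p \in \Omega$ we have $\psi_1^{(n)} p \to y^+$ and $(\psi_1^{(n)})^{-1} p \to y^-$ in $\partial\Omega$; Lemma~\ref{lem:duality} applied to the bi-proximal sequence $\psi_1^{(n)}$ forces $y^\pm = x^\pm$, and Proposition~\ref{prop:limits}(3) then yields a rank-one limit $\Phi \in \Pb(\End(\Kb^{d+1}))$ with $\operatorname{Im}\Phi = x^+$ and $\ker\Phi = H^-$. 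A direct computation with the attracting eigenvector $v^+_n$ (noting $\psi_1^{(n)} v^+_n = e v^+_n$ while any operator-norm-$1$ representative of $\psi_1^{(n)}$ rescales this by $e$ divided by a quantity tending to infinity) shows $v^+ \in \ker\Phi = H^-$, giving $x^+ \subset H^-$; the symmetric argument for $(\psi_1^{(n)})^{-1}$ gives $x^- \subset H^+$. Thus the projective line $L$ through $x^\pm$ would sit inside $H^+ \cap H^-$, forcing $L \cap \Omega = \emptyset$, whereas by Theorem~\ref{thm:blow_up} each $L_n \cap \Omega$ is a nonempty ``half-plane'' in $L_n \cong \Pb^1(\Kb)$. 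The closed sets $L_n \cap \overline\Omega$ must therefore converge to a positive-dimensional subset of $L \cap \partial\Omega$ connecting $x^+$ and $x^-$, along which the $C^1$ $\Kb$-tangent hyperplane must be constant, contradicting $T^{\Kb}_{x^+}\partial\Omega \neq T^{\Kb}_{x^-}\partial\Omega$. I expect the subtle step to be this final geometric argument, especially in the quaternionic case, where one likely combines the limit of half-balls with the rich $\Aut_0(\Omega)$-action on $L_n$ from Theorem~\ref{thm:blow_up} and Corollary~\ref{cor:bi_3} to propagate constancy of the $\Kb$-tangent hyperplane.
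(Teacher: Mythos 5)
Your construction is the same as the paper's: apply Corollary~\ref{cor:special_bi} to each $\varphi_n$ to obtain an element of $\SL_{d+1}(\Kb)$ with fixed eigenvalues on $x^+_{\varphi_n}$, $x^-_{\varphi_n}$ and $H^+_n\cap H^-_n$, let $n\to\infty$, and invoke closedness of $\Aut(\Omega)$. The paper simply asserts that these matrices converge in $\SL_{d+1}(\Kb)$ once $H^+_n\cap H^-_n\to H^+\cap H^-$; the bulk of your extra work goes into ruling out degeneration of the splitting $x^+\oplus x^-\oplus(H^+\cap H^-)$, which is indeed the only obstruction. Your treatment of that case is right in outline, but two steps need repair. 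First, Lemma~\ref{lem:duality} cannot be used to conclude $y^\pm=x^\pm$: it runs in the opposite direction (from convergence of orbits to convergence of attracting/repelling fixed points) and its hypothesis $T^{\Kb}_{y^+}\partial\Omega\neq T^{\Kb}_{y^-}\partial\Omega$ is precisely what you do not know. Fortunately you do not need $y^\pm=x^\pm$: Proposition~\ref{prop:limits}(2)--(3) already give that $\ker\Phi=T^{\Kb}_{y^-}\partial\Omega$ is the kernel of some $f\in\Omega^*$, hence a $\Kb$-hyperplane disjoint from $\Omega$. Second, your eigenvector computation actually places not only $x^+$ but also $x^-$ (eigenvalue $e^{-1}$) and $H^+\cap H^-$ (eigenvalue $1$) inside $\ker\Phi$, since all eigenvalues stay bounded while the operator norms blow up; this shortcuts your final geometric argument. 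Indeed, $\ker\Phi$ is then a $\Kb$-hyperplane through the boundary point $x^+$ that misses $\Omega$, so by $C^1$-ness of $\partial\Omega$ it must equal $T^{\Kb}_{x^+}\partial\Omega=H^+$, and by the same reasoning at $x^-$ it must equal $H^-$, contradicting $H^+\neq H^-$. This replaces the limit-of-half-balls argument, which as written leaves unexplained why the $\Kb$-tangent hyperplane is constant along the limiting set.
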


\begin{proof}
Let $H_n^{\pm} = T_{x_{\varphi_n}^\pm}^{\Kb} \partial \Omega$ and $H^\pm = T_{x^\pm}^{\Kb} \partial \Omega$. Since $H^+ \neq H^-$, $H_n^+ \cap H_n^- \rightarrow H^+ \cap H^-$ in the space of $(d-1)$-planes in $\Kb^{d+1}$. By Corollary~\ref{cor:special_bi}  we can assume that $\varphi_n \in \SL_{d+1}(\Kb)$ and
\begin{enumerate}
\item $(\varphi_n)|_{x^+_{\varphi_n}} = 2 \Id|_{x^+_{\varphi_n}} $,
\item $(\varphi_n)|_{x^-_{\varphi_n}} = \frac{1}{2}\Id|_{x^-_{\varphi_n}} $,
\item $(\varphi_n)|_{H_n^+ \cap H_n^-} = \Id|_{H_n^+ \cap H_n^-}$.
\end{enumerate}
Since $H_n^+ \cap H_n^- \rightarrow H^+ \cap H^-$, $\varphi_n$ converges to  $\varphi \in \SL_{d+1}(\Kb)$ where 
\begin{enumerate}
\item $(\varphi)|_{x^+} = 2 \Id|_{x^+} $,
\item $(\varphi)|_{x^-} = \frac{1}{2}\Id|_{x^-} $,
\item $(\varphi)|_{H^+ \cap H^-} = \Id|_{H^+ \cap H^-}$.
\end{enumerate}
Then since $\Aut(\Omega)$ is closed $[\varphi] \in \Aut(\Omega)$. 
\end{proof}

\begin{lemma}
Suppose $\Kb$ is either $\Cb$ or $\Hb$, $\Omega \subset\Pb(\Kb^{d+1})$ is a proper domain with $C^1$ boundary, and there exists $x,y \in \Lc(\Omega)$ such that $T_x^{\Kb} \partial \Omega \neq T_y^{\Kb} \partial \Omega$. Then for any $z \in \Lc(\Omega)$ 
\begin{align*}
T_z^{\Kb} \partial \Omega \cap \partial \Omega = \{z\}.
\end{align*}
\end{lemma}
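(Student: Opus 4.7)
The plan is to show that every $z \in \Lc(\Omega)$ is realized as the attracting fixed point $x^+_{\varphi}$ of some bi-proximal $\varphi \in \Aut(\Omega)$. Once this is done, Proposition~\ref{prop:bi_prox_str}(2) gives $T_z^{\Kb} \partial \Omega \cap \partial \Omega = \{z\}$ immediately.

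Fix $z \in \Lc(\Omega)$, choose $\varphi_n \in \Aut(\Omega)$ and $p \in \Omega$ with $\varphi_n p \rightarrow z$, and pass to a subsequence so that $\varphi_n^{-1} p \rightarrow z^-$ for some $z^- \in \partial \Omega$. If $T_z^{\Kb} \partial \Omega \neq T_{z^-}^{\Kb} \partial \Omega$, then Lemma~\ref{lem:duality} says $\varphi_n$ is bi-proximal for large $n$ with $x^+_{\varphi_n} \rightarrow z$ and $x^-_{\varphi_n} \rightarrow z^-$, and Lemma~\ref{lem:att_fix_closed} (whose tangent-hyperplane hypothesis coincides with the Case~1 hypothesis) produces a bi-proximal $\varphi \in \Aut(\Omega)$ with $x^+_{\varphi} = z$, finishing this case.

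If instead $T_z^{\Kb} \partial \Omega = T_{z^-}^{\Kb} \partial \Omega$, the hypothesis of the lemma combined with Theorem~\ref{thm:bi_prox_exist} yields a bi-proximal element of $\Aut(\Omega)$, and then Corollary~\ref{cor:different_endpoints} provides a bi-proximal $\psi \in \Aut(\Omega)$ with $x^+_{\psi}, x^-_{\psi} \notin \{z\}$. By Proposition~\ref{prop:bi_prox_str}(2), the equality $T_{x^{\pm}_{\psi}}^{\Kb} \partial \Omega = T_z^{\Kb} \partial \Omega$ would force $z \in T_{x^{\pm}_{\psi}}^{\Kb} \partial \Omega \cap \partial \Omega = \{x^{\pm}_{\psi}\}$, contradicting the choice of $\psi$. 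Thus the tangent hyperplanes at $x^+_{\psi}$ and $x^-_{\psi}$ are both different from $T_z^{\Kb} \partial \Omega = T_{z^-}^{\Kb} \partial \Omega$, so with $\phi_m := \psi^m$ the hypothesis of Lemma~\ref{lem:generic_endpoints} holds. That lemma makes $\gamma_k := \varphi_k \psi^{-k}$ bi-proximal for large $k$ with $\gamma_k p \rightarrow z$ and $\gamma_k^{-1} p \rightarrow x^+_{\psi}$. Since $T_z^{\Kb} \partial \Omega \neq T_{x^+_{\psi}}^{\Kb} \partial \Omega$, Lemma~\ref{lem:duality} further gives $x^+_{\gamma_k} \rightarrow z$ and $x^-_{\gamma_k} \rightarrow x^+_{\psi}$, and Lemma~\ref{lem:att_fix_closed} then delivers a bi-proximal $\gamma \in \Aut(\Omega)$ with $x^+_{\gamma} = z$, as required.

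The anticipated main obstacle is the bookkeeping of the tangent-hyperplane transversality conditions required by Lemmas~\ref{lem:generic_endpoints}, \ref{lem:duality}, and~\ref{lem:att_fix_closed}, especially the degenerate Case~2 in which the forward and backward limit directions of the approaching sequence happen to share a tangent hyperplane at the boundary. The mechanism that keeps the argument on the rails is Proposition~\ref{prop:bi_prox_str}(2): the tangent hyperplane at an attracting fixed point meets $\partial \Omega$ only at that point, which upgrades the pointwise separation provided by Corollary~\ref{cor:different_endpoints} into the tangent-hyperplane separation demanded downstream. This is precisely what lets a single pair $x, y \in \Lc(\Omega)$ with distinct tangent hyperplanes propagate into the same conclusion at every point of the limit set.
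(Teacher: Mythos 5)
Your proof is correct and follows essentially the same route as the paper's: use Corollary~\ref{cor:different_endpoints} to produce a bi-proximal element whose fixed points avoid $z$ (and $z^-$), upgrade that to tangent-hyperplane disjointness via Proposition~\ref{prop:bi_prox_str}(2), then chain Lemma~\ref{lem:generic_endpoints}, Lemma~\ref{lem:duality}, and Lemma~\ref{lem:att_fix_closed} to realize $z$ as the attracting fixed point of a bi-proximal automorphism. The only difference is cosmetic: you split off the case $T_z^{\Kb}\partial\Omega \neq T_{z^-}^{\Kb}\partial\Omega$ and handle it directly with Lemma~\ref{lem:duality}, whereas the paper runs the mixing argument uniformly in both cases.
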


\begin{proof}
There exists $\phi_m \in \Aut(\Omega)$ and $p \in \Omega$ so that $\phi_m p \rightarrow z$. By passing to a subsequence we can assume that $\phi_m^{-1} p \rightarrow z^-$. Now by Theorem~\ref{thm:bi_prox_exist} and Corollary~\ref{cor:different_endpoints} there exists $\gamma \in \Aut(\Omega)$ bi-proximal so that $\{ z, z^-\} \cap \{ x^+_{\gamma}, x^-_{\gamma}\} = \emptyset$. Proposition~\ref{prop:bi_prox_str} implies that
\begin{align*}
T_{x^\pm_{\gamma}}^{\Kb} \partial \Omega \cap \partial \Omega= \{ x^{\pm}_{\gamma} \}
\end{align*}
and so 
\begin{align*}
\left\{ T_z^{\Kb} \partial \Omega, T_{z^-}^{\Kb} \partial \Omega\right\} \cap \left\{ T_{x^+_{\gamma}}^{\Kb} \partial \Omega, T_{x^-_{\gamma}}^{\Kb} \partial \Omega\right\} = \emptyset.
\end{align*}
Then by Lemma~\ref{lem:generic_endpoints} there exists bi-proximal elements $\gamma_k \in \Aut(\Omega)$ so that 
\begin{align*}
x^+_{\gamma_k} \rightarrow z \text{ and } x^-_{\gamma_k} \rightarrow x^+_{\gamma}.
\end{align*}
So by the previous lemma there exists a bi-proximal element $\varphi \in \Aut(\Omega)$ so that $x^+_{\varphi} = z$. But then by Proposition~\ref{prop:bi_prox_str}
\begin{align*}
T_z^{\Kb} \partial \Omega \cap \partial \Omega = \{z\}.
\end{align*}
\end{proof}

\begin{lemma}\label{lem:bi_prox_between}
Suppose $\Kb$ is either $\Cb$ or $\Hb$, $\Omega \subset\Pb(\Kb^{d+1})$ is a proper domain with $C^1$ boundary, and there exists $x,y \in \Lc(\Omega)$ such that $T_x^{\Kb} \partial \Omega \neq T_y^{\Kb} \partial \Omega$. Then for all $x^+, y^+ \in \Lc(\Omega)$ distinct there exists a bi-proximal element $\varphi \in \Aut(\Omega)$ so that $x^+=x^+_{\varphi}$ and $y^+=x^-_{\varphi}$. 
\end{lemma}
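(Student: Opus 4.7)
The strategy is to construct a sequence of bi-proximal elements whose attracting fixed points converge to $x^+$ and repelling fixed points converge to $y^+$, then extract the limit automorphism using Lemma~\ref{lem:att_fix_closed}. First I would observe $T_{x^+}^{\Kb} \partial \Omega \neq T_{y^+}^{\Kb} \partial \Omega$: the preceding lemma gives $T_{x^+}^{\Kb} \partial \Omega \cap \partial \Omega = \{x^+\}$, so $y^+ \in \partial \Omega \setminus \{x^+\}$ forces $y^+ \notin T_{x^+}^{\Kb} \partial \Omega$, whereas $y^+ \in T_{y^+}^{\Kb} \partial \Omega$.

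Next I would produce bi-proximal $\alpha, \beta \in \Aut(\Omega)$ with $x_\alpha^+ = x^+$, $x_\beta^+ = y^+$, and, crucially, $x_\alpha^- \neq x_\beta^-$. The preceding lemma shows that for any $z \in \Lc(\Omega)$ one can build a bi-proximal element whose attracting fixed point is $z$; inspecting its proof, the repelling fixed point is obtained as the attracting fixed point $x_\gamma^+$ of an auxiliary bi-proximal element $\gamma$ chosen via Corollary~\ref{cor:different_endpoints} to avoid a prescribed finite set of boundary points. Hence I first build $\alpha$, and then build $\beta$ via the same construction applied to $y^+$, simply adding $x_\alpha^-$ to the list of points that $x_\gamma^+$ must avoid.

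Now consider $\gamma_n := \alpha^n \beta^{-n}$. Proposition~\ref{prop:bi_prox_str} gives $\beta^{-n} p \to x_\beta^-$, and the preceding lemma gives $T_{x_\alpha^-}^{\Kb} \partial \Omega \cap \partial \Omega = \{x_\alpha^-\}$, so the inequality $x_\beta^- \neq x_\alpha^-$ forces $x_\beta^- \notin T_{x_\alpha^-}^{\Kb} \partial \Omega$. Part (4) of Proposition~\ref{prop:limits} applied to the sequence $\alpha^n$ then yields $\gamma_n p \to x^+$ via a standard two-neighborhood argument, and the symmetric reasoning applied to $\beta^n \alpha^{-n}$ gives $\gamma_n^{-1} p \to y^+$. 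Since the tangent hyperplanes at $x^+$ and $y^+$ differ, Lemma~\ref{lem:duality} implies that $\gamma_n$ is bi-proximal for large $n$ with $x_{\gamma_n}^+ \to x^+$ and $x_{\gamma_n}^- \to y^+$. Lemma~\ref{lem:att_fix_closed} then produces the desired bi-proximal $\varphi \in \Aut(\Omega)$ with $x_\varphi^+ = x^+$ and $x_\varphi^- = y^+$.

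The main obstacle is ensuring enough flexibility in the preceding lemma's construction to arrange $x_\alpha^- \neq x_\beta^-$; this is precisely the role of Corollary~\ref{cor:different_endpoints}, which lets the auxiliary bi-proximal element's fixed points avoid any finite set of prescribed boundary points.
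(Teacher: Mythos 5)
Your proposal is correct and follows essentially the same route as the paper: both arguments replace the original approximating sequences by powers of bi-proximal elements $\alpha,\beta$ whose attracting fixed points are $x^+,y^+$ and whose repelling fixed points are forced apart (the paper does this with an auxiliary element supplied by Corollary~\ref{cor:different_endpoints}, exactly as you describe), then pass to the product sequence and conclude via Lemma~\ref{lem:duality} and Lemma~\ref{lem:att_fix_closed}. The only cosmetic difference is that you re-derive the convergence $\gamma_n p \to x^+$, $\gamma_n^{-1}p \to y^+$ directly from part (4) of Proposition~\ref{prop:limits} instead of citing Lemma~\ref{lem:generic_endpoints}, which is why the single condition $x^-_\alpha \neq x^-_\beta$ suffices for you.
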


\begin{proof}
There exists $\phi_m, \varphi_n \in \Aut(\Omega)$ and $p \in \Omega$ so that $\phi_m p \rightarrow x^+$ and $\varphi_n p \rightarrow y^+$. By passing to a subsequence we may suppose that $\phi_m^{-1} p \rightarrow x^-$ and $\varphi_n^{-1} p \rightarrow y^-$ for some $x^-, y^- \in \partial \Omega$. 

Notice that if 
\begin{align*}
\left\{ T_{x^+}^{\Kb} \partial \Omega, T_{x^-}^{\Kb} \partial \Omega\right\} \cap \left\{ T_{y^+}^{\Kb} \partial \Omega, T_{y^-}^{\Kb} \partial \Omega\right\} = \emptyset
\end{align*} 
then the lemma follows from Lemma~\ref{lem:generic_endpoints} and Lemma~\ref{lem:att_fix_closed}. Motivated by this observation, the proof reduces to modifying $\phi_m$ and $\varphi_n$ so that $T_{x^-}^{\Kb} \partial \Omega \neq T_{y^-}^{\Kb} \partial \Omega$. 

Now by Theorem~\ref{thm:bi_prox_exist} and Proposition~\ref{prop:bi_prox_str} there exists a bi-proximal element $\gamma \in \Aut(\Omega)$ so that $\{ x^+, x^-, y^+, y^-\} \cap \{ x_{\gamma}^+, x_{\gamma}^-\} = \emptyset$. Then since $T_{x^\pm_{\gamma}}^{\Cb} \partial \Omega \cap \partial \Omega= \{ x^{\pm}_{\gamma} \}$ this implies that 
\begin{align*}
\left\{ T_x^{\Kb} \partial \Omega, T_{x^-}^{\Kb} \partial \Omega\right\} \cap \left\{ T_{x^+_{\gamma}}^{\Kb} \partial \Omega, T_{x^-_{\gamma}}^{\Kb} \partial \Omega\right\} = \emptyset
\end{align*}
So by Lemma~\ref{lem:generic_endpoints} and Lemma~\ref{lem:att_fix_closed} there exists a bi-proximal element $\phi$ so that $x^+ = x^+_{\phi}$ and $x_{\gamma}^+ = x^-_{\phi}$. Using the same argument we can find a bi-proximal element $\varphi$ so that $y^+ = x^+_{\varphi}$ and 
\begin{align*}
\{ x^+, x_{\gamma}^+\} \cap \{ y^+, x^+_{\varphi} \} = \emptyset. 
\end{align*}
Now 
\begin{align*}
\phi^n p \rightarrow x^+ , \phi^{-n} p \rightarrow x_{\phi}^-, \varphi^m p \rightarrow y^+, \text{ and } \varphi^{-m} p \rightarrow x_{\varphi}^-.
\end{align*}
Then by part (2) and (3) of Proposition~\ref{prop:bi_prox_str}
\begin{align*}
\left\{ T_{x^+}^{\Kb} \partial \Omega, T_{x_{\varphi}^-}^{\Kb} \partial \Omega\right\} \cap \left\{ T_{y^+}^{\Kb} \partial \Omega, T_{x_{\phi}^-}^{\Kb} \partial \Omega\right\} = \emptyset
\end{align*}
and so the lemma follows from Lemma~\ref{lem:generic_endpoints} and Lemma~\ref{lem:att_fix_closed}.
\end{proof}

\begin{proof}[Proof of Proposition~\ref{prop:limit_set}]
We first observe that $\Lc(\Omega)$ is closed. Suppose $x_n \in \Lc(\Omega)$ and $x_n \rightarrow x$. Then there exists $\varphi_{n,m} \in \Aut(\Omega)$ and $p_n \in \Omega$ so that 
\begin{align*}
\lim_{m \rightarrow \infty} \varphi_{n,m} p_n = x_n.
\end{align*}
Now fix $p \in \Omega$, then by Proposition~\ref{prop:limits} 
\begin{align*}
\lim_{m \rightarrow \infty} \varphi_{n,m} p = x_n.
\end{align*}
Then there exists $m_n \rightarrow \infty$ so that 
\begin{align*}
\lim_{n \rightarrow \infty} \varphi_{n,m_n} p = x.
\end{align*}
So $\Lc(\Omega)$ is closed. 

Now if $x, y \in \Lc(\Omega)$ then there exists a bi-proximal element $\varphi \in \Aut(\Omega)$ so that $x = x^+_{\varphi}$ and $y=x^-_{\varphi}$. Then by Corollary~\ref{cor:bi_3}, $y \in \Aut_0(\Omega) \cdot x$. Since $x,y \in \Lc(\Omega)$ were arbitrary we see that $\Aut_0(\Omega)$ acts transitively on $\Lc(\Omega)$. 

Now $\Aut_0(\Omega) \leq \PGL_{d+1}(\Cb)$ being a closed subgroup (see Proposition~\ref{prop:closed}) is a Lie subgroup and acts smoothly on $\Pb(\Kb^{d+1})$. Since $\Lc(\Omega) = \Aut_0(\Omega)\cdot x$ for any $x \in \Lc(\Omega)$ we see from Lemma~\ref{lem:smth_orbits} that $\Lc(\Omega)$ is a $C^\infty$ submanifold of $\Pb(\Kb^{d+1})$. 
\end{proof}

\section{Proof of Theorem~\ref{thm:C1}}

For this section suppose that $\Omega \subset \Pb(\Kb^{d+1})$ is a proper domain, $\partial \Omega$ is a $C^1$ hypersurface, and the limit set spans $\Kb^{d+1}$. Since the limit set spans there exists $x, y \in \Lc(\Omega)$ so that $x \notin T_y^{\Kb} \partial \Omega$. In particular, $T_x^{\Kb} \partial \Omega \neq T_y^{\Kb} \partial \Omega$. So $\Aut(\Omega)$ contains a bi-proximal element by Theorem~\ref{thm:bi_prox_exist}.

Now fix a bi-proximal element $\varphi \in \Aut_0(\Omega)$ and let $H^\pm =T_{x^\pm_{\varphi}}^{\Kb} \partial \Omega$. Pick coordinates so that 
\begin{enumerate}
\item $x^+_\varphi = [1:0:\dots:0]$,
\item $x^-_\varphi = [0:1:0: \dots :0]$,
\item $H^+ \cap H^- = \{ [0:0:z_2:\dots:z_d] : z_2, \dots, z_d \in \Kb\}$.
\end{enumerate}
For the rest of the proof identify $\Kb^d$ with the affine chart
\begin{align*}
\{ [1:z_1:z_2:\dots :z_d ] : z_1, \dots, z_d \in \Kb \}.
\end{align*}
Then by Theorem~\ref{thm:blow_up} there exists a $C^1$ function $F: \Kb^{d-1} \rightarrow \Omega$ so that 
\begin{align*}
\Omega = \{ (z_1,z_2, \dots, z_d) : \Real(z_1) > F(z_2, \dots, z_d) \}.
\end{align*}
and by Corollary~\ref{cor:special_bi} 
\begin{align*}
\psi_t : = \begin{pmatrix} e^t & & \\ & e^{-t} & \\ & & \Id_{d-1} \end{pmatrix}  \in  \Aut_0(\Omega)
\end{align*}
for $t \in \Rb$. 

Now there exists $x_1, \dots, x_{d-1} \in \Lc(\Omega)$ so that (as $\Kb$-lines)
\begin{align*}
x^+_\varphi  + x^-_\varphi  + x_1 + \dots + x_{d-1} = \Kb^{d+1}.
\end{align*}
By Proposition~\ref{prop:bi_prox_str}, $H^- \cap \partial \Omega = \{ x_{\varphi}^-\}$ and so $x_1, \dots, x_{d-1}$ are contained in our fixed affine chart.

Now we claim that 
\begin{align*}
T_0 \Lc(\Omega)  = \Kb_P \times \Kb^{d-1} = T_0 \partial \Omega.
\end{align*}
Notice that the second equality is by definition. 

For $1 \leq i \leq d-1$ let $L_i$ be the $\Kb$-line in $\Kb^d$ which contains $0$ and $x_i$.

Now fix some $i$. By Lemma~\ref{lem:bi_prox_between} and Theorem~\ref{thm:blow_up},  $L_i \cap \partial \Omega$ is projectively equivalent to a half space and thus in the affine chart $\Kb^d$ is either a half space or a open ball in $L_i$ (see Observation~\ref{obs:sphere_halfplane}). Since $F(z_2, \dots, z_d) > 0$ for all non-zero $(z_2, \dots, z_d)$ we see that $L_i \cap \partial \Omega$ must be an open ball in the affine chart. Moreover by Theorem~\ref{thm:blow_up}, $L_i \cap \partial \Omega \subset  \Lc(\Omega)$. Now since $L_i \cap \partial \Omega$ is a sphere we can pick $a_1,\dots, a_r \in L_i \cap \partial \Omega$ so that $r = \dimension_{\Rb} \Kb$ and
\begin{align*}
\Spanset_{\Rb} \{ a_1,\dots, a_r \} = L_i
\end{align*}
as elements of this affine chart. 

Let $P: \Kb^{d} \rightarrow \Kb^d$ be the projection 
\begin{align*}
P(z_1, \dots, z_d) = (0,z_2, \dots, z_d).
\end{align*}
Now if $z \in \Lc(\Omega)$ then
\begin{align*}
\psi_t(z) = (e^{-2t}z_1, e^{-t} z_2, \dots, e^{-t} z_d) \in \Lc(\Omega).
\end{align*}
and
\begin{align*}
\lim_{t \rightarrow \infty} \frac{1}{e^{-t}} \psi_t(z)  = (0, z_2, \dots, z_d) = P(z).
\end{align*}
Since $\Lc(\Omega)$ is a submanifold this implies that $P(z) \in T_0 \Lc(\Omega)$. Thus we see that $P(a_1), \dots, P(a_r) \subset T_0 \Lc(\Omega)$. Thus  $P(L_i) \subset T_0 \Lc(\Omega)$. 

Since $i$ was arbitrary we then see that 
\begin{align*}
P(L_1 + \dots + L_{d-1}) \subset T_0 \Lc(\Omega).
\end{align*}
But since 
\begin{align*}
x^+_\varphi  + x^-_\varphi  + x_1 + \dots + x_{d-1} = \Kb^{d+1}.
\end{align*}
as $\Kb$-lines we have
\begin{align*}
\{0\} \times \Kb^{d-1} = P(L_1 + \dots + L_{d-1}) \subset T_0 \Lc(\Omega).
\end{align*}
Using Theorem~\ref{thm:blow_up} we see that $\Kb_P \times \{ 0 \} \subset \Lc(\Omega)$ and so
\begin{align*}
T_0 \Lc(\Omega) =  \Kb_P \times \Kb^{d-1} = T_0 \partial \Omega.
\end{align*}
Thus $\Lc(\Omega) \subset \partial \Omega$ is an open and closed submanifold of $\partial \Omega$. Since $\partial \Omega$ is connected, this implies that $\Lc(\Omega) = \partial \Omega$. 

Then since $\Lc(\Omega)$ is a $C^\infty$ submanifold of $\Pb(\Kb^{d+1})$ we see that $\partial \Omega$ is $C^\infty$ and so the Theorem follows from Theorem~\ref{thm:C2}. 

\section{An example}\label{sec:example}

In this section we construct a non-symmetric proper domain $\Omega \subset \Pb(\Kb^{d+1})$ with $C^{1,1}$ boundary so that there exists $x,y \in \Lc(\Omega)$ with $T_x^{\Kb} \partial \Omega \neq T_y^{\Kb} \partial \Omega$.  

Let $\wh{F}: \Pb(\Kb^{d-1}) \rightarrow \Rb_{>0}$ be a $C^{1,1}$ function (which is not $C^2$) and define the function 
\begin{align*}
F(z_2, \dots, z_d) = \left\{ \begin{array}{ll} 
\wh{F}([z_2 : \dots  : z_d]) & \text{ if } (z_1, \dots, z_d) \neq 0 \\
0 & \text{ otherwise. }
\end{array} \right.
\end{align*}
Then consider the domain 
\begin{align*}
\Omega = \left\{ [1:z_1: \dots : z_d] \in \Pb(\Kb^{d+1}) : \Imaginary(z_1) > \left(\abs{z_2}^2 + \dots + \abs{z_{d}}^2\right) F(z_2,\dots ,z_d) \right\}.
\end{align*}
Clearly $\partial \Omega$ is $C^{1,1}$ away from $[1:0:\dots:0]$ and $[0:1:0:\dots:0]$. Since $F$ is bounded, $\partial \Omega$ is $C^{1,1}$ at $[1:0:\dots:0]$. Moreover if we consider the projective map 
\begin{align*}
T([z_0 : z_1 : z_2 : \dots : z_d])=  [ z_ 1 : -z_ 0 : z_2 : \dots : z_d]
\end{align*}
then $T(\Omega) = \Omega$. Thus $\Omega$ is also $C^{1,1}$ at $[0:1:0:\dots:0]$.

Finally notice that $\Aut(\Omega)$ contains the transformation 
\begin{align*}
[z_0: z_1:z_2\dots : z_d] \rightarrow [e^{t} z_0 : e^{-t} z_1 : z_2 : \dots : z_d]
\end{align*}
for any $t \in \Rb$. Thus $[1:0:\dots:0], [0:1:0:\dots:0] \in \Lc(\Omega)$.

\appendix 

\section{The Quaternions}\label{sec:quaternions}

In this expository section we will review the basic properties of the quaternions. The \emph{quaternions} $\Hb = \left\{ a+bi+cj+dk : a,b,c,d \in \Rb\right\}$ form a complex two dimensional vector space with multiplication rules:
\begin{align*}
i^2=j^2=k^2=ijk=-1.
\end{align*}
The quaternions have a natural conjugation:
\begin{align*}
\overline{a+bi+cj+dk} = a-bi-ci-dk
\end{align*}
and a corresponding absolute value:
\begin{align*}
\abs{a+bi+cj+dk}^2=(a+bi+cj+dk)\overline{(a+bi+cj+dk)} = a^2+b^2+c^2+d^2.
\end{align*}
One can also speak of the real part $\Real(z) = \frac{1}{2}(z+\overline{z})$ and the imaginary part $\Imag(z) = \frac{1}{2}(z-\overline{z})$ of a quaternion. 

In this paper we identify $\Hb^d$ with $d$-by-$1$ matrices with entries in $\Hb$ and let $\Hb$ act on $\Hb^d$ as follows:
\begin{align*}
\alpha \cdot (z_1, \dots, z_d)^t = (z_1 \alpha, \dots, z_d \alpha)^t.
\end{align*}
We then define $\GL_d(\Hb)$ to be the invertible $\Rb$-linear transformations of $\Hb^d$ which commute with the above action of $\Hb$.  If $M_d(\Hb)$ is the space of $d$-by-$d$ matrices with entries in $\Hb$, then we can identify
\begin{align*}
\GL_d(\Hb) = \GL_{2d}(\Cb) \cap M_d(\Hb).
\end{align*}
Since the quaternions are non-commutative this identification requires that the scalar multiplication acts on the right while $M_d(\Hb)$ acts on the left. 

Given $\varphi \in \GL(\Hb^d)$  we can define a determinant by viewing $\varphi$ as an element of $\GL_{2d}(\Cb)$:
\begin{align*}
D(\varphi) : = \abs{\det(\varphi: \Cb^{2d} \rightarrow \Cb^{2d})}.
\end{align*}
There are more sophisticated ways to define determinants for matrices with quaternionic entries, but the simple definition given above is good enough for our purposes. Finally, define the \emph{special linear group}
\begin{align*}
\SL_{d+1}(\Hb) = \{ \varphi \in \GL_{d+1}(\Hb) : D(\varphi)=1 \}.
\end{align*}

Now we can define the quaternionic projective space $\Pb(\Hb^{d+1})$ to be 
\begin{align*}
\Pb(\Hb^{d+1}) = \{ \vec{z} \in \Hb^{d+1} \} / \{ \vec{z} \sim \alpha \cdot \vec{z}\}.
\end{align*}
Then $\GL_{d+1}(\Hb)$ acts on $\Pb(\Hb^{d+1})$ and an element $\varphi \in \GL_{d+1}(\Hb)$ acts trivially if and only if 
\begin{align*}
\varphi = \begin{pmatrix} \alpha & & \\ & \ddots & \\ & & \alpha \end{pmatrix}
\end{align*}
for some $\alpha \in \Rb^*$. So the group 
\begin{align*}
\PGL_{d+1}(\Hb^{d+1}) = \GL_{d+1}(\Hb) / \{ \Rb^* Id \}
\end{align*}
acts faithfully on $\Pb(\Hb^{d+1})$. Moreover every $\varphi \in \PGL_{d+1}(\Hb)$ has a representative in $\SL_{d+1}(\Hb)$. 

\section{M{\"o}bius transformations}

In this section we review the basic properties of M{\"o}bius transformations when $\Kb$ is either $\Cb$ or $\Hb$. All these facts are well known when $\Kb=\Cb$.

We can identify $\Pb(\Kb^2)$ with $\overline{\Kb} = \Kb \cup \{\infty\}$ via the map
\begin{align*}
[z_1 : z_2 ] \rightarrow \left\{\begin{array}{ll} 
z_1(z_2)^{-1} & \text{ if } z_2 \neq 0\\
\infty & \text{ otherwise.}
\end{array} \right.
\end{align*}

With this identification $\PGL_2(\Kb)$ acts on $\overline{\Kb}$ by 
\begin{align*}
\begin{pmatrix} a & b \\ c & d \end{pmatrix} \cdot z = (az+b)(cz+d)^{-1}.
\end{align*}

As in the complex case,  M{\"o}bius transformations map spheres and hyperplanes to spheres and hyperplanes. 

\begin{observation} 
\label{obs:sphere_halfplane}
$\PGL_2(\Kb)$ maps spheres and hyperplanes to spheres and hyperplanes. 
\end{observation}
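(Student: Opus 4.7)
My plan is to represent spheres and hyperplanes in $\overline{\Kb}$ uniformly as the zero sets of ``generalized sphere equations''
\[
S_{\alpha,\beta,\gamma} = \{ z \in \Kb : \alpha \abs{z}^2 + 2\Real(\overline{\beta}\, z) + \gamma = 0 \},
\]
with $\alpha,\gamma \in \Rb$, $\beta \in \Kb$, and $(\alpha,\beta,\gamma) \neq 0$, with the convention that $\infty \in S_{\alpha,\beta,\gamma}$ exactly when $\alpha = 0$. Then $S_{0,\beta,\gamma}$ (with $\beta \neq 0$) is an affine hyperplane in $\Kb$ together with $\infty$; $S_{\alpha,\beta,\gamma}$ for $\alpha \neq 0$ is a genuine sphere in $\Kb$ when non-empty; and every sphere or hyperplane in $\overline{\Kb}$ arises this way. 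The goal reduces to showing that $\PGL_2(\Kb)$ preserves this family.

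Next I would cut the problem down to a short list of generators. When $c = 0$, the transformation $\begin{pmatrix}a & b \\ 0 & d\end{pmatrix}\cdot z = (az+b)d^{-1}$ is a composition of a left multiplication $z \mapsto az$, a right multiplication $z \mapsto zd^{-1}$, and a translation. When $c \neq 0$, the identity
\[
(az+b)(cz+d)^{-1} = ac^{-1} + (b - ac^{-1}d)(cz+d)^{-1}
\]
writes the transformation as a composition of the affine map $z \mapsto cz+d$, the inversion $J \colon w \mapsto w^{-1}$, and another affine map. So it suffices to verify that translations, one-sided multiplications, and $J$ each send $\{S_{\alpha,\beta,\gamma}\}$ to itself.

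Translations and one-sided multiplications are handled by routine direct substitutions, using $\abs{az}^2 = \abs{a}^2\abs{z}^2$ and $\abs{za}^2 = \abs{z}^2\abs{a}^2$; one just records the new coefficients. The only genuine content is $J$. Setting $w = z^{-1}$ in $\alpha\abs{z}^2 + 2\Real(\overline{\beta}\, z) + \gamma = 0$ and multiplying through by $\abs{w}^2$, and using that $\abs{w}^2 \in \Rb$ is central together with $\abs{w}^2 w^{-1} = \overline{w}$, the equation becomes
\[
\alpha + 2\Real(\overline{\beta}\, \overline{w}) + \gamma \abs{w}^2 = 0.
\]
Now $\Real(\overline{\beta}\,\overline{w}) = \Real(\overline{w\beta}) = \Real(w\beta) = \Real(\beta w)$, where the last equality uses the cyclicity identity $\Real(uv) = \Real(vu)$ on $\Hb$, which is an immediate one-line coordinate check. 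Hence the image equation is $\gamma\abs{w}^2 + 2\Real(\overline{\overline{\beta}}\, w) + \alpha = 0$, i.e.\ $S_{\gamma,\overline{\beta},\alpha}$, of the required form. The main obstacle, and the only place the quaternionic case differs from the classical complex one, is keeping every left/right multiplication in its correct order during the inversion computation; the identity $\Real(uv) = \Real(vu)$ on $\Hb$ is precisely what rescues the classical complex argument.
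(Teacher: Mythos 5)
Your argument is correct, but it is not the route the paper takes. The paper represents every sphere and hyperplane as an Apollonius locus $\{ z \in \Kb : \abs{z-a} = R\abs{z-b}\}$ and asserts (leaving the computation to the reader) that a M\"obius transformation carries any set of this form to another set of this form; this has the advantage of treating all of $\PGL_2(\Kb)$ in one stroke, with no generator decomposition, at the cost of the preliminary identification of spheres/hyperplanes with Apollonius loci. You instead use the classical quadratic description $\alpha\abs{z}^2 + 2\Real(\overline{\beta}z) + \gamma = 0$ together with the factorization of $(az+b)(cz+d)^{-1}$ into affine maps and the inversion $J$, and you correctly isolate the one point where noncommutativity threatens the classical complex argument, rescuing it with $\abs{w}^2 w^{-1} = \overline{w}$ and $\Real(uv)=\Real(vu)$; the order of multiplications in your decomposition (left multiplication, right multiplication by $d^{-1}$, inversion applied on the right) is consistent throughout. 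Two small points you may wish to flag: for $\alpha \neq 0$ the set $S_{\alpha,\beta,\gamma}$ can be empty or a single point when $\abs{\beta}^2 \leq \alpha\gamma$, so one should either restrict to the nondegenerate triples or note that a bijection of $\overline{\Kb}$ cannot collapse a genuine sphere to a degenerate locus; and it is worth observing that your convention $\infty \in S_{\alpha,\beta,\gamma} \Leftrightarrow \alpha = 0$ is respected by $J$, since $J(S_{\alpha,\beta,\gamma}) = S_{\gamma,\overline{\beta},\alpha}$ contains $\infty$ exactly when $\gamma = 0$, i.e.\ when $0 \in S_{\alpha,\beta,\gamma}$. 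Neither is a gap; both are one-line remarks.
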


\begin{proof}
Every sphere and half plane can be described as a set of the form
\begin{align*}
\{ z \in \Kb : \abs{z-a} = R \abs{z-b}\}
\end{align*}
for some $a, b \in \Kb$ and $R >0$. Moreover every set of this form is a sphere or half plane. A calculation shows that M{\"o}bius transformations map a set of this form to a set of this form.
\end{proof}

Let 
\begin{align*}
\Hc_+ = \{ z \in \Kb : \Real(z) > 0\}.
\end{align*}
Now $\Hc_+$ is projectively equivalent to the unit ball by the M{\"o}bius transformation
\begin{align*}
z \rightarrow (z-1)(z+1)^{-1}.
\end{align*}
In particular, $\Aut(\Hc_+)$ is isomorphic with
\begin{align*}
\Aut( \{ \abs{z} < 1\} )  = \PU_{\Kb}(1,1) = \{ \varphi \in \PGL(\Kb^2) : Q \circ \varphi = Q \} 
\end{align*}
where $Q(z) = \abs{z_1}^2 - \abs{z_2}^2$. The next proposition follows from the basic geometry of rank one symmetric spaces of non-compact type, but we provide an elementary proof.

\begin{proposition} \label{prop:auto_halfspace} \
\begin{enumerate}
\item If $x \in \partial \Hc_+ \subset \overline{\Kb}$ then the group 
\begin{align*}
P_x = \{ \varphi \in \Aut_0(\Hc_+) : \varphi x = x\}
\end{align*}
 acts transitively on $\Hc_+$,
\item $\Aut_0(\Hc_+)$ acts transitively on $\partial \Hc_+$,
\item $\Aut_0(\Hc_+)$ is generated by the two subgroups
\begin{align*}
U = \left\{ \begin{pmatrix} 1 & w \\ 0 & 1 \end{pmatrix} : \Real(w)=0 \right\} \text{ and } V = \left\{ \begin{pmatrix} 1 & 0 \\ w & 1 \end{pmatrix} : \Real(w)=0 \right\}.
\end{align*}
\end{enumerate}
\end{proposition}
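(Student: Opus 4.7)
My plan is to establish the three parts in the order (2), (1), (3), building the needed automorphisms explicitly along the way.

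For part (2), the translations $T_\beta(z)=z+\beta$ with $\beta\in\Kb_P$ obviously preserve $\Hc_+$ and act transitively on $\partial\Hc_+\setminus\{\infty\}=\Kb_P$, so the remaining task is to produce some element of $\Aut_0(\Hc_+)$ sending $\infty$ to a finite boundary point. For each $w\in\Kb_P\setminus\{0\}$ I would introduce the M\"obius transformation $\tau_w(z)=-wz^{-1}w$, coming from the matrix $\left(\begin{smallmatrix}0 & w\\ -w^{-1} & 0\end{smallmatrix}\right)$; the identity $\overline{wbw}=-wbw$ for $b\in\Kb_P$ shows that $\tau_w$ preserves $\Hc_+$, and clearly $\tau_w(\infty)=0$. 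Writing out the product
\[\begin{pmatrix}1 & w\\ 0 & 1\end{pmatrix}\begin{pmatrix}1 & 0\\ -w^{-1} & 1\end{pmatrix}\begin{pmatrix}1 & w\\ 0 & 1\end{pmatrix}=\begin{pmatrix}0 & w\\ -w^{-1} & 0\end{pmatrix}\]
and noting that $-w^{-1}=w/|w|^2\in\Kb_P$ shows moreover that $\tau_w\in\langle U,V\rangle$, so in particular $\tau_w\in\Aut_0(\Hc_+)$.

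For part (1), since all point stabilizers are conjugate under $\Aut_0(\Hc_+)$, part (2) reduces the problem to $x=\infty$. The stabilizer $P_\infty$ contains the translations $T_\beta$ and the positive scalings $z\mapsto cz$, both manifestly in $\Aut_0(\Hc_+)$; given any $p\in\Hc_+$ with $a=\Real(p)>0$ and $\beta=\Imag(p)\in\Kb_P$, the map $z\mapsto az+\beta$ sends $1$ to $p$, which is enough.

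For part (3), set $G=\langle U,V\rangle$. Since the construction in (2) in fact places $\tau_w$ in $G$, the group $G$ already acts transitively on $\partial\Hc_+$, so given any $\varphi\in\Aut_0(\Hc_+)$ one can pick $g\in G$ with $g(\infty)=\varphi(\infty)$, whereupon $g^{-1}\varphi\in P_\infty$; it therefore suffices to show $P_\infty\subset G$. Translations lie in $U$, and the products $\tau_{si}\tau_{ti}$ for $s,t\in\Rb^*$ compute to diagonal matrices realizing the positive scalings $z\mapsto (s/t)^2z$, which completes the argument when $\Kb=\Cb$. For $\Kb=\Hb$ the remaining ingredient of $P_\infty$ is the conjugation subgroup $\{z\mapsto dzd^{-1}:d\in\Kb^*\}$, which I would produce from the products $\tau_{e_1}\tau_{e_2}$ with unit $e_1,e_2\in\Kb_P$: a short computation (using $e_i^{-1}=-e_i$) collapses $\tau_{e_1}\tau_{e_2}$ to the scalar matrix $(e_1e_2)I$, whose projective action is conjugation by $e_1e_2$. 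The main step I expect to require some care is verifying that the set $\{e_1e_2:e_i\in\Kb_P,\ |e_i|=1\}$ sweeps out the entire unit sphere $S^3\subset\Hb$; this reduces to the standard fact that every rotation in $\SO(3)$ is a product of two reflections, transported through the double cover $S^3\to\SO(3)$.
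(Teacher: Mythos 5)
Your proof is correct. The overall architecture is the same as the paper's (get transitivity on the boundary, then reduce part (3) to showing the stabilizer of $\infty$ lies in $G=\langle U,V\rangle$), but the technical engine is different. The paper computes $P_\infty$ explicitly, obtains $P_0$ by conjugating with the swap matrix to get boundary transitivity, and then proves $P_\infty, P_0\subset G$ by a Lie-algebra computation: the bracket of the nilpotent generators of $U$ and $V$ produces the diagonal matrices $\bigl(\begin{smallmatrix} wu & 0\\ 0 & \overline{wu}\end{smallmatrix}\bigr)$, and since $\{wu : w,u\in\Kb_P\}$ spans $\Kb$ over $\Rb$ this yields the full diagonal subgroup at once. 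You instead work entirely at the group level: the inversion $\tau_w$ is exhibited as the explicit product $UVU$, and the diagonal subgroup is recovered from products $\tau_{e_1}\tau_{e_2}$ of inversions, which in the quaternionic case requires the additional geometric fact that products of two unit pure quaternions sweep out all of $S^3$ (your reduction to "every rotation of $\Rb^3$ is a product of two reflections" is exactly right, since $v\mapsto e_ive_i$ is the reflection in $e_i^\perp$ and the composite is conjugation by $e_1e_2$). Your version is more elementary — no Lie theory, everything is an explicit matrix identity — at the cost of this extra surjectivity argument, which is precisely the exponentiated form of the paper's spanning statement for $\{wu\}$. Both arguments quietly rely on the same unproved-but-routine description of $P_\infty$ (translations, real dilations, and, for $\Hb$, conjugations); you should make that computation explicit when writing this up, just as the paper's "a direct calculation shows" should be.
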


\begin{proof}
A direct calculation shows that 
\begin{align*}
P_\infty = \left\{ \begin{pmatrix} \lambda & w \\ 0 & \overline{\lambda}^{-1}  \end{pmatrix} : \lambda, w \in \Kb, \ \lambda \neq 0, \ \Real(w) =0\right\}.
\end{align*}
Then $P_{\infty}$ clearly acts on transitively on $\Hc_+$ and $\partial \Hc_+ \setminus \{\infty\}$. Since 
\begin{align*}
P_0 =   \begin{pmatrix} 0 & 1 \\ 1 & 0 \end{pmatrix}^{-1}  P_{\infty} \begin{pmatrix} 0 & 1 \\ 1 & 0 \end{pmatrix}
\end{align*}
we see that $P_0$ acts transitively on $\partial \Hc_+ \setminus \{0\}$. Since $\Aut_0(\Hc_+)$ contains $P_0$ and $P_{\infty}$, this implies part (2). Then since $\Aut_0(\Hc_+)$ acts transitively on the boundary, we see that every group $P_x$ is conjugate to $P_{\infty}$. Then since $P_{\infty}$ acts transitively on $\Hc_+$, we then have part (1). 

It remains to prove part (3). Let $G$ be the closed group generated by $U$ and $V$. If $\Real(u)=\Real(w)=0$ then
\begin{align*}
\left[ \begin{pmatrix} 0 & w \\ 0 & 0 \end{pmatrix}, \begin{pmatrix} 0 & 0 \\ u & 0 \end{pmatrix} \right] = \begin{pmatrix} wu & 0 \\ 0 & -uw \end{pmatrix} =  \begin{pmatrix} wu & 0 \\ 0 & \overline{wu} \end{pmatrix}.
\end{align*}
So the Lie algebra of $G$ contains 
\begin{align*}
\left\{ \begin{pmatrix} \lambda & w \\ u & -\overline{\lambda} \end{pmatrix} : \lambda, w, u \in \Kb, \ \Real(w)=\Real(u)=0 \right\}.
\end{align*}
In particular $G$ contains $P_\infty$ and $P_0$. This implies that $G$ acts transitively on the boundary. Now suppose $\varphi \in \Aut_0(\Hc_+)$. Since $G$ acts transitively on $\partial \Hc_+$ there exists $\gamma \in G$ such that $(\gamma \varphi)(0) = 0$. Then $\gamma \varphi \in P_0 \subset G$ which implies that $\varphi \in G$. 
\end{proof}

 \bibliographystyle{alpha}
\bibliography{hilbert}

\end{document}